\newtheorem{thm}{Theorem}[section]
\newtheorem{cor}[thm]{Corollary}
\newtheorem{prop}[thm]{Proposition}
\newtheorem{lem}[thm]{Lemma}
\theoremstyle{definition}
\newtheorem{defn}[thm]{Definition}
\theoremstyle{remark}
\newtheorem{rem}[thm]{Remark}
\begin{document}

\title{Hodge Groups of Hodge Structures with Hodge Numbers $(n,0,\ldots,0,n)$}
\author{Laure Flapan}
\address{Mathematics Department, University of California, Los Angeles}
\email{lflapan@math.ucla.edu}
\subjclass[2010]{14C30}
\keywords{Hodge group, Mumford-Tate group, Hodge structure, Hodge classes, Hodge conjecture}

\maketitle

\begin{abstract}
We study the possible Hodge groups of simple polarizable $\mathbb{Q}$-Hodge structures with Hodge numbers $(n,0,\ldots,0,n)$. In particular, we generalize work of Moonen-Zarhin, Ribet, and Tankeev to completely determine the possible Hodge groups of such Hodge structures when $n$ is equal to $1$, $4$, or a prime $p$. In addition, we determine, under certain conditions on the endomorphism algebra, the possible Hodge groups when $n=2p$, for $p$ an odd prime. A consequence of these results is that, for all powers of a simple complex $2p$-dimensional abelian variety whose endomorphism algebra is of the specified type,  both the Hodge and General Hodge Conjectures hold. \end{abstract}
\maketitle
\tableofcontents{}

 \section{Introduction}
A $\mathbb{Q}$-Hodge structure of weight $w\ge1$ with Hodge numbers $(n,0,\ldots,0,n)$ is a $\mathbb{Q}$-vector space $V$ together with a decomposition into $n$-dimensional complex subspaces
$$V\otimes _\mathbb{Q}\mathbb{C}=V^{w,0}\oplus V^{0,w}$$
such that the two subspaces $V^{w,0}$ and  $V^{0,w}$ are conjugate to each other. Recently, Totaro \cite[Theorem 4.1]{totaro} classified all of the possible endomorphism algebras of simple polarizable $\mathbb{Q}$-Hodge structures with Hodge numbers $(n,0,\ldots,0,n)$. These possible endomorphism algebras fall into four broad types, under a classification by Albert of division algebras with positive involution \cite{albert}, which are referred to as Type I, Type II, Type III, and Type IV.

Complex abelian varieties of dimension $n$ are equivalent, up to isogeny, to polarizable $\mathbb{Q}$-Hodge structures of weight $1$ with Hodge numbers $(n,n)$. Thus, Totaro's result generalizes a result of Shimura's \cite[Theorem 5]{shimura}, which classifies all the possible endomorphism algebras of a complex abelian variety of fixed dimension.

If $V$ is a $\mathbb{Q}$-Hodge structure of even weight $w$ with decomposition
\[V\otimes _\mathbb{Q} \mathbb{C}=\bigoplus_{p+q=w}V^{p,q},\]
then the \emph{Hodge classes} of $V$ are the elements of $V$ that lie in the subspace $V^{w/2,w/2}$ in this decomposition. These Hodge classes are the subject of the long-standing Hodge Conjecture. The Hodge Conjecture states that if $X$ is a smooth projective variety, then, for any $p\ge 1$, all of the Hodge classes of $H^{2p}(X,\mathbb{Q})$ are $\mathbb{Q}$-linear combinations of rational cohomology classes of algebraic subvarieties of codimension $p$ in $X$.

For a rational sub-Hodge structure $W$ of a $\mathbb{Q}$-Hodge structure $V$, meaning a subspace $W\subset V$ such that the summands $W^{p,q}:=V^{p,q}\cap W\otimes_\mathbb{Q}\mathbb{C}$ endow $W$ with a $\mathbb{Q}$-Hodge structure, the \emph{level} of $W$ is
$l(W)=\max\{p-q \mid W^{p,q}\ne 0\}.$
The General Hodge Conjecture states that for a rational sub-Hodge structure $W\subset H^w(X,\mathbb{Q})$ such that $l(W)=w-2p$, there exists a Zariski-closed subset $Z$ of codimension $p$ in $X$ such that $W$ is contained in $\ker(H^w(X,\mathbb{Q})\rightarrow H^w(X-Z,\mathbb{Q}))$. 

The \emph{Hodge group} of a $\mathbb{Q}$-Hodge structure $V$ is a connected algebraic $\mathbb{Q}$-subgroup of $SL(V)$ whose invariants in the tensor algebra generated by $V$ and its dual $V^*$ are exactly the Hodge classes. Thus Hodge groups are objects of interest towards a better understanding of both the Hodge and General Hodge Conjectures. 

In this paper, we characterize the possible Hodge groups of simple polarizable $\mathbb{Q}$-Hodge structures with Hodge numbers $(n,0,\ldots,0,n)$ by applying Totaro's results about endomorphism algebras as well as by extending techniques of Moonen-Zarhin \cite{fourfold}, Ribet \cite{ribet1}, and Tankeev \cite{tankeevrib} for determining Hodge groups and combining these with more recent work of Green-Griffiths-Kerr \cite{domain} about domains of polarizable $\mathbb{Q}$-Hodge structures with specified Hodge group. In particular,  Proposition \ref{onedim}, Theorem \ref{primethm}, and Theorem \ref{fourfold} determine the possible Hodge groups when $n$ is equal to $1$, a prime $p$, or $4$, respectively. Moreover, for $n=2p$, where $p$ is an odd prime, Theorem \ref{2phodge} determines the possible Hodge groups when the endomorphism algebra is of Types I, II, or III as well as when the endomorphism algebra is of Type IV and has a particular type of action by an imaginary quadratic field. 

The results for $n=1$, $p$, and $4$ generalize known results about the possible Hodge groups of simple complex abelian varieties, while the results for $n=2p$ are new.

In particular, Ribet and Tankeev showed in \cite{ribet1} and \cite{tankeevrib}, respectively, that the Hodge group of a simple complex abelian variety $X$ of prime dimension is always equal to the Lefschetz group of $X$, meaning, roughly speaking, that the Hodge group of $X$ is always as large as possible. We show that, in fact, the above holds for all simple polarizable $\mathbb{Q}$-Hodge structures with Hodge numbers $(p,0,\ldots,0,p)$. Moreover, for simple polarizable $\mathbb{Q}$-Hodge structures with Hodge numbers $(2p,0,\ldots,0,2p)$, we establish conditions under which the above holds and conditions under which it does not.

Moonen and Zarhin addressed the case of simple complex abelian fourfolds in \cite{fourfold}.  They showed that the Hodge group of a simple complex abelian fourfold $X$ is equal to the Lefschetz group of $X$ except in the cases when the endomorphism algebra of $X$ is $\mathbb{Q}$ or a CM field of degree $2$ or $8$ over $\mathbb{Q}$. In all of these exceptional cases, an additional group is also possible. 

We show that for a simple polarizable $\mathbb{Q}$-Hodge structure $V$ with Hodge numbers $(4,0\ldots,0,4)$, an additional group other than the Lefschetz group can arise as the Hodge group in all the same exceptional cases that Moonen-Zarhin determined. 
When the endomorphism algebra of $V$ is a CM field, the additional possible group is analogous to the one found by Moonen and Zarhin.  In the case when the endomorphism algebra of $V$ is equal to $\mathbb{Q}$, Moonen and Zarhin used a construction of Mumford's \cite[Section 4]{mumford} to show that the additional group $SL(2)\times SO(4)$ arises. We show that the analogous group for an even-weight simple polarizable Hodge structure with Hodge numbers $(4,0,\ldots,0,4)$ does not arise, however that a different non-Lefschetz group, namely the group $SO(7)$ acting by the spin representation does arise.

The \emph{Hodge ring} of a smooth projective variety $X$ is defined by
\[\mathcal{B}^{\bullet}(X)=\bigoplus_{l\ge0} \left(H^{2l}(X,\mathbb{Q})\cap H^{l,l}\right).\]
Most of the proven cases of the Hodge Conjecture and General Hodge Conjecture, particularly for abelian varieties, are obtained by proving that the \emph{divisor ring} $\mathcal{D}^{\bullet}(X)$ of $X$, meaning the $\mathbb{Q}$-subalgebra of $\mathcal{B}^{\bullet}(X)$ generated by divisor classes, is equal to $\mathcal{B}^{\bullet}(X)$. Theorem \ref{2phodge} about simple polarizable $\mathbb{Q}$-Hodge structures with Hodge numbers $(2p,0,\ldots,0,2p)$ allows one to use this approach in Corollary \ref{genhg} to prove the Hodge and General Hodge Conjectures for all powers of a simple $2p$-dimensional abelian variety whose endomorphism algebra is of Type I or II in Albert's classification.

Mumford gave the first construction of a variety $X$ such that $\mathcal{B}^{\bullet}(X)\ne \mathcal{D}^{\bullet}(X)$ (see \cite{pohlmann}). Weil later observed that the exceptional Hodge classes in Mumford's example, meaning the Hodge classes that did not come from divisor classes, were still, in some sense, described by the endomorphisms of the rational cohomology of $X$  \cite{weil} (see Section \ref{hgconimp}). The exceptional Hodge classes satisfying the property Weil described came to be known an \emph{Weil classes}. We show in Corollary \ref{hgcon} that for any simple abelian variety $X$ of dimension $2p$ satisfying the hypotheses of Theorem \ref{2phodge}, the Hodge ring $\mathcal{B}^{\bullet}(X^k)$ of any power $X^k$ is generated by divisors and Weil classes. 

In order to further motivate the study of $\mathbb{Q}$-Hodge structures with Hodge numbers $(n,0,\ldots,0,n)$, let us say that a $\mathbb{Q}$-Hodge structure \emph{comes from geometry} if it is a summand of the rational cohomology of a smooth complex projective variety defined by a correspondence. Griffiths tranversality implies that a variation of Hodge structures of weight at least $2$ with no two adjacent non-zero Hodge numbers is locally constant \cite[Theorem 10.2]{voisin}. In particular, this applies to $\mathbb{Q}$-Hodge structures with Hodge numbers $(n,0,\ldots,0,n)$.  It follows that only countably many $\mathbb{Q}$-Hodge structures with Hodge numbers $(n,0,\ldots,0,n)$ and weight at least $2$ can come from geometry. 

Such Hodge structures arise naturally in geometry as the rational cohomology of an abelian variety or, for instance, of a rigid Calabi-Yau threefold. Moreover, we can produce a $\mathbb{Q}$-Hodge structure with Hodge numbers $(n,0,n)$ as follows. If $X$ is a smooth complex projective surface with maximal Picard number, namely the Picard number of $X$ is equal to $h^{1,1}(X)$, then $H^2(X,\mathbb{Q})$ modulo the subspace of Hodge classes is a $\mathbb{Q}$-Hodge structure with Hodge numbers $(p_g(X),0,p_g(X))$.  

As examples of higher dimensional constructions of such Hodge structures, in \cite{schreieder}, Schreieder shows how to construct a smooth complex projective variety with a prescribed set of Hodge numbers in a given degree $w$ (under suitable conditions on $h^{p,p}$ when $w$ is even). Arapura's shows in \cite{arapura} how to construct a $\mathbb{Q}$-Hodge structure with Hodge numbers $(n,0,\ldots,0,n)$ as a direct summand of the rational cohomology of a power $E^N$ of a CM elliptic curve. Bergeron-Millson-Moeglin show that $\mathbb{Q}$-Hodge structures with Hodge numbers $(n,0,\ldots,0,n)$ arise arithmetically as summands in the rational cohomology of Shimura varieties associated with a standard unitary group \cite[Corollary 6.2]{bergeron}. 

Aside form these concrete geometric examples, however, very little is known about the subset of the period domain of all $\mathbb{Q}$-Hodge structures with Hodge numbers $(n,0,\ldots,0,n)$ consisting of those that come from geometry.

The organization of the paper is as follows. In Section \ref{basics}, we introduce the necessary background on $\mathbb{Q}$-Hodge structures and their endomorphism algebras. Sections \ref{hgnumbers}-\ref{hgreps} focus on preliminary properties of Hodge structures with Hodge numbers $(n,0,\ldots,0,n)$. Sections \ref{TypeIsection}-\ref{typeiv} then focus on results about the Hodge groups of these Hodge structures depending on the endomorphism algebra type in Albert's classification. The main results of the paper occur in Section \ref{mainresults},  where the possible Hodge groups for $n=1$, $4$, $p$, and $2p$ are characterized. Section \ref{hgconimp} then gives consequences of the results in Section \ref{mainresults} in the context of the Hodge Conjecture and General Hodge Conjecture for simple abelian varieties of dimension $2p$.

\textbf{Acknowledgements.} I would like to thank my advisor Burt Totaro for his help and encouragement. I also thank Salman Abdulali, Radu Laza, and Ben Moonen for helpful conversations and comments. Finally, I thank the referee for helpful comments and suggestions. 

\section{Background on Hodge Structures}\label{basics}

\subsection{Hodge Structures and Polarizations}

A $\mathbb{Q}$-\emph{Hodge structure} $V$ is a finite dimensional $\mathbb{Q}$-vector space together with a decomposition into linear subspaces 
 $$V\otimes _\mathbb{Q} \mathbb{C}=\bigoplus_{(p,q)\in\mathbb{Z}^2} V^{p,q},$$
such that $\overline{V^{p,q}}=V^{q,p}$ and such that the \emph{weight grading} $p+q$ is defined over $\mathbb{Q}$.  Unless stated otherwise, the term ``Hodge structure" in this paper will always refer to a $\mathbb{Q}$-Hodge structure. A $\mathbb{Q}$-Hodge structure $V$ is of \emph{(pure) weight} $w$ if $V^{p,q}=0$ whenever $p+q$ is not equal to $w$.  For example, any smooth complex projective variety $X$ has a Hodge structure of (pure) weight $w$ on $H^w(X,\mathbb{Q})$.
 
 Alternatively, a $\mathbb{Q}$-Hodge structure can be defined as a finite dimensional $\mathbb{Q}$-vector space $V$ together with a homomorphism of $\mathbb{R}$-algebraic groups 
\begin{equation}\label{hgdef}h: R_{\mathbb{C}/\mathbb{R}}\mathbb{G}_m\rightarrow GL(V\otimes _\mathbb{Q}\mathbb{R}),\end{equation} where $R_{\mathbb{C}/\mathbb{R}}$ denotes the Weil restriction functor from $\mathbb{C}$ to $\mathbb{R}$. Here $h(z)$ acts on $V^{p,q}$ as multiplication by $z^{-p}\overline{z}^{-q}$. 
 
A $\mathbb{Q}$-Hodge structure $V$ has \emph{Hodge numbers} $(a_0,a_1,\ldots, a_w)$ if $V$ has weight $w\ge 0$ and
\begin{equation*}
\dim_\mathbb{C}V^{i,w-i}=
\begin{cases}
a_i &\mbox{for } 0\le i\le w\\
 0 &\mbox{otherwise}.
 \end{cases}
 \end{equation*}

A \emph{polarization} of a $\mathbb{Q}$-Hodge structure $V$ of weight $w$ is a bilinear form $\langle,\rangle: V\times V\rightarrow \mathbb{Q}$ that is alternating if $w$ is odd, symmetric if $w$ is even, and whose extension to $V\otimes _\mathbb{Q} \mathbb{C}$ satisfies:
\begin{enumerate}
\item $\langle V^{p,q}, V^{p',q'}\rangle=0$ if $p'\ne w-p$
\item $i^{p-q}(-1)^{\frac{w(w-1)}{2}}\langle x,\overline{x}\rangle >0$ for all nonzero $x\in V^{p,q}$.
\end{enumerate}

Note, for instance, that for $X$ a smooth complex projective variety, a choice of ample line bundle on $X$ will determine a polarization on the Hodge structure $H^w(X,\mathbb{Q})$. The category of polarizable $\mathbb{Q}$-Hodge structures is a semisimple abelian category \cite[Theorem 1.16]{moonenmt}.   All Hodge structures considered in this paper will be polarizable.

\subsection{The Endomorphism Algebra}\label{endalgsection}

Let $V$ be a simple $\mathbb{Q}$-Hodge structure with polarization $\langle,\rangle: V\times V\rightarrow \mathbb{Q}$. Then the endomorphism algebra $L=\mathrm{End}_{\mathbb{Q}-\mathrm{HS}}(V)$ of $V$ as a $\mathbb{Q}$-Hodge structure is a division algebra over $\mathbb{Q}$ with an involution $a\to \overline{a}$, given by $\langle ax,y\rangle=\langle x, \overline{a}y\rangle$ for all $x,y \in V$. This involution $^{-}$ is called the \emph{Rosati involution}. 

The Rosati involution is a \emph{positive involution}, meaning that, if $\Sigma(L)$ is the set of embeddings of the endomorphism algebra $L$ into $\mathbb{C}$, then for every element $\sigma\in \Sigma(L)$, the reduced trace $\sigma(\mathrm{tr}^L_\mathbb{Q}(x\overline{x}))$ is positive as an element of $\mathbb{R}$ for all nonzero $x\in V$ \cite[Remark 1.20]{moonenmt}. It follows that if $L$ is a field, then $L$ is either a totally real or a CM field, where a \emph{CM field} means a totally imaginary quadratic extension of a totally real number field. Namely, if $L$ is a field, then the Rosati involution on $L$ just corresponds to complex conjugation.
 
 Letting $F_0$ be the center of $L$ and $F$ the subfield of $F_0$ fixed by the Rosati involution, Albert's classification of division algebras over a number field that have positive involution \cite[Chapter X, \S11]{albert} yields that $L$ is one of the following four types:
\begin{enumerate}
\item Type I: $L=F$ is totally real

\item Type II: $L$ is a totally indefinite quaternion algebra over the totally real field $F$

\item Type III: $L$ is a totally definite quaternion algebra over  the totally real field $F$

\item Type IV: $L$ is a central simple algebra over the CM field $F_0$.
\end{enumerate}

\subsection{The Mumford-Tate Group and Hodge Group}

The \emph{Mumford-Tate group} $MT(V)$ of a polarizable $\mathbb{Q}$-Hodge structure $V$ is the $\mathbb{Q}$-Zariski closure of the homomorphism 
$h: R_{\mathbb{C}/\mathbb{R}}\mathbb{G}_m\rightarrow GL(V\otimes_\mathbb{Q}\mathbb{R})$ which defines the Hodge structure on $V$. Note that the Mumford-Tate group is thus a connected group. Define the cocharacter 
$$\mu: \mathbb{G}_m\rightarrow R_{\mathbb{C}/\mathbb{R}}\mathbb{G}_m$$
 to be the unique cocharacter such that $z\circ \mu$ is the identity in $\mathrm{End(\mathbb{G}_m)}$ and $\overline{z}\circ \mu$ is trivial. Then the Mumford-Tate group of $V$ may be alternatively described as the smallest $\mathbb{Q}$-algebraic group contained in $GL(V)$ such that 
 $$h\circ \mu: \mathbb{G}_{m,\mathbb{C}}\rightarrow GL(V\otimes_\mathbb{Q}\mathbb{C})$$
 factors through $MT(V)_\mathbb{C}$.

Instead of working with the Mumford-Tate group, we will generally work with a slightly smaller connected group, called the \emph{Hodge group} $Hg(V)$ of $V$. The Hodge group is the $\mathbb{Q}$-Zariski closure of the restriction of the homomorphism $h$ to the circle group 
$$U_1=\ker(\mathrm{Norm}: R_{\mathbb{C}/\mathbb{R}}\mathbb{G}_m\rightarrow \mathbb{G}_m).$$

If the Hodge structure $V$ is of weight $0$, then $Hg(V)$ and $MT(V)$ coincide. If $V$ is of nonzero weight, then $MT(V)$ contains $\mathbb{G}_m$, and in fact, is equal to the almost direct product  $\mathbb{G}_m\cdot Hg(V)$ in $GL(V)$.
 
The category of polarizable $\mathbb{Q}$-Hodge structures is a semi-simple Tannakian category, which, in particular, implies that $MT(V)$ and $Hg(V)$ are reductive $\mathbb{Q}$-groups \cite[Theorem 1.16]{moonenmt}.

\begin{rem}\label{basicprop} The key property of the Hodge group $Hg(V)$ of a polarizable $\mathbb{Q}$-Hodge structure $V$ is that $Hg(V)$ is the subgroup of $SL(V)$ preserving the Hodge classes in all the $\mathbb{Q}$-Hodge structures $T$ formed as  finite direct sums of spaces of the form $T^{k,l}:=V^{\otimes k}\otimes (V^*)^{\otimes l}$, where $V^*$ denotes the dual of the vector space $V$ \cite[I.B.1]{domain}. 
\end{rem}

\begin{rem}\label{triv} If $V$ is a polarizable $\mathbb{Q}$-Hodge structure with Hodge group $\{1\}$, then $h(z)$ acts as the identity on nonzero $V^{p,q}$ in the decomposition of $V\otimes _{\mathbb{Q}}\mathbb{C}$. However, by definition $h(z)$ acts on $V^{p,q}$ as multiplication by $z^{-p}\overline{z}^{-q}$. Hence, if $Hg(V)=\{1\}$, then $V^{p,q}$ is zero for all $p,q$ such that $p\ne q$. 
\end{rem}

\begin{rem}\label{ss} If the endomorphism algebra $L$ of a polarizable $\mathbb{Q}$-Hodge structure $V$ has no simple factors of Type IV, then the Hodge group of $V$ is semisimple \cite[Proposition 1.24]{moonenmt}.
\end{rem}

\subsection{The Lefschetz Group}

Since elements of the endomorphism algebra $L=\mathrm{End}_{\mathbb{Q}-\mathrm{HS}}(V)$ of a polarizable $\mathbb{Q}$-Hodge structure $V$ preserve the Hodge decomposition, elements of $L$ may be viewed as Hodge classes of the $\mathbb{Q}$-Hodge structure $\mathrm{End}_\mathbb{Q}(V)\cong V\otimes V^*$. Remark \ref{basicprop} yields that these Hodge classes are the elements of $\mathrm{End}_\mathbb{Q}(V)$ which are invariant under the action of the Hodge group $Hg(V)$. Namely we have
\[L=[\mathrm{End}_\mathbb{Q}(V)]^{Hg(V)}.\]
In particular, the Hodge group of $V$ is contained in the connected component of the centralizer of $L$ in the $\mathbb{Q}$-group $SL(V)$.

Let the \emph{Lefschetz group} $Lef(V)$ of $V$ be the connected component of the centralizer of $L$ in:
\begin{equation*}
\begin{cases}
Sp(V) &\mbox{if $V$ is of odd weight}\\
SO(V) &\mbox{if $V$ is of even weight}.
\end{cases}
\end{equation*}
It should be noted that the definition of the Lefschetz group used here corresponds with the definition used by Murty in \cite[Section 2]{murty2}, which is the connected component of the identity in the definition used by Murty in \cite[Section 3.6.2]{murty}.

\begin{rem}\label{lefrem}
If $\langle,\rangle$ is a polarization on $V$, then $Hg(V)$ preserves $\langle,\rangle$, which yields the inclusions
\[Hg(V)\subseteq Lef(V).\]
\end{rem}

\section{Hodge Structures with Hodge Numbers $(n,0,\ldots,0,n)$}\label{hgnumbers}

We now specify our discussion to the main subject of this paper, namely polarizable $\mathbb{Q}$-Hodge structures with Hodge numbers $(n,0,\ldots,0,n)$. The first crucial observation to make about these Hodge structures is the following.

\begin{rem}\label{equiv}There is an equivalence of categories between the category of $\mathbb{Q}$-Hodge structures of weight $w$ and Hodge numbers $(n,0\ldots,0,n)$, and the category of $\mathbb{Q}$-Hodge structures of weight $1$ and Hodge numbers $(n,n)$. This equivalence is given simply by identifying $V^{w,0}\subset V \otimes _\mathbb{Q} \mathbb{C}$ with $V^{1,0}$. When $w$ is odd, this equivalence preserves polarizability. However, this is not the case when $w$ is even.  
\end{rem}

However, there is also an equivalence of categories between the category of complex abelian varieties up to isogeny and the category of polarizable $\mathbb{Q}$-Hodge structures with Hodge numbers $(n,n)$ given by identifying a complex abelian variety with its weight-$1$ rational cohomology. Thus when analyzing properties of polarizable $\mathbb{Q}$-Hodge structures with Hodge numbers $(n,0,\ldots,0,n)$ in the context, for instance, of their endomorphism algebras, Hodge groups, or Lefschetz groups, we may use existing machinery about complex abelian varieties to deal with the odd-weight case. For the even weight case, however, new techniques are needed. 

In \cite{shimura}, Shimura classifies all of the possible endomorphism algebras of simple polarized complex abelian varieties, which in light of the above equivalence, yields a classification of of the possible endomorphism algebras of odd-weight simple polarizable Hodge structures with Hodge numbers $(n,0,\ldots,0,n)$. Totaro completes this classification in \cite{totaro} to include all even-weight such Hodge structures as well. For reference, we include this classification below.

\subsection{Endomorphism Algebra Classification}\label{endclass}
Let $V$ be a simple polarizable $\mathbb{Q}$-Hodge structure of weight $w\ge 1$ with Hodge numbers $(n,0,\ldots,0,n)$ and $L$ its endomorphism algebra. Let $F_0$ be the center of $L$, so that $F_0$ is a totally real or CM field (see Section \ref{endalgsection}), and let $F$ be the maximal totally real subfield of $F_0$. Writing $g=[F:\mathbb{Q}]$, $2n=m[L:\mathbb{Q}]$, $q^2=[L:F_0]$,  let $B\cong M_m(L^{\mathrm{op}})$ be the centralizer of $L$ in $\mathrm{End}_\mathbb{Q}(V)$. 

If $L$ is of Type IV in Albert's classification, so that $F_0$ is a CM field, let $\Sigma(F_0)=\{\sigma_1,\ldots,\sigma_g,\overline{\sigma}_1,\ldots,\overline{\sigma}_g\}$ be the set of embeddings of $F_0$ into $\mathbb{C}$. Then $L\otimes _\mathbb{Q}\mathbb{C}$ is isomorphic to $2g$ copies of $M_q(\mathbb{C})$, one for each embedding $\sigma_i\in \Sigma(F_0)$. This decomposition of $L\otimes _\mathbb{Q}\mathbb{C}$ yields a decomposition of $V^{w,0}\subset V\otimes_\mathbb{Q}\mathbb{C}$ into summands $V^{w,0}(\sigma_i)$ on which $F_0$ acts via the embedding $\sigma_i \in \Sigma(F_0)$.  Letting $n_{\sigma_i}=\dim_\mathbb{C}V^{w,0}(\sigma_i)$, we then have $n_{\sigma_i}+n_{\overline{\sigma}_i}=mq$ for each $i=1,\ldots,g$.

\begin{thm}\label{totthm}(Totaro \cite[Theorem 4.1] {totaro}) In the above notation, if $V$ is a simple polarizable $\mathbb{Q}$-Hodge structure  of weight $w\ge 1$ with Hodge numbers $(n,0,\ldots,0,n)$, then $[L:\mathbb{Q}]$ divides $2n$ and $[F:\mathbb{Q}]$ divides $n$. 

Conversely, every division algebra with positive involution satisfying these two bounds is the endomorphism algebra of some simple polarizable $\mathbb{Q}$-Hodge structure with Hodge numbers $(n,0,\ldots,0,n)$ except for the following $5$ odd-weight and $7$ even-weight exceptional cases.

Odd-weight exceptional cases:
\begin{enumerate}
\item Type III and $m=1$
\item Type III, $m=2$, $\mathrm{disc}(B,^{-})=1$ in $F^*/(F^*)^2$
\item Type IV and $\sum_{i=1}^g n_{\sigma_i}n_{\overline{\sigma}_i}=0$, unless $m=q=1$
\item Type IV, $m=2$, $q=1$, and $n_{\sigma_i}=n_{\overline{\sigma}_i}=1$ for all $i=1,\ldots,g$
\item Type IV, $m=1$, $q=2$, and $n_{\sigma_i}=n_{\overline{\sigma}_i}=1$ for all $i=1,\ldots,g$
\end{enumerate}

Even-weight exceptional cases
\begin{enumerate}
\item Type II and $m=1$
\item Type II, $m=2$, $\mathrm{disc}(B,^{-})=1$ in $F^*/(F^*)^2$
\item Type IV and $\sum_{i=1}^g n_{\sigma_i}n_{\overline{\sigma}_i}=0$, unless $m=q=1$
\item Type IV, $m=2$, $q=1$, and $n_{\sigma_i}=n_{\overline{\sigma}_i}=1$ for all $i=1,\ldots,g$
\item Type IV, $m=1$, $q=2$, and $n_{\sigma_i}=n_{\overline{\sigma}_i}=1$ for all $i=1,\ldots,g$
\item Type I and $m=2$
\item Type I, $m=4$, and $(V,\langle,\rangle)$ has discriminant $1$ in $F^*/(F^*)^2$
\end{enumerate}
\end{thm}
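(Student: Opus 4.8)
The plan is to translate the problem into the representation theory of how $L$ acts on the Hodge decomposition $V\otimes_\mathbb{Q}\mathbb{C}=V^{w,0}\oplus V^{0,w}$, and then, for each admissible action, to construct a polarizable Hodge structure realizing it with endomorphism algebra exactly $L$. By Remark \ref{equiv}, a weight-$w$ structure with Hodge numbers $(n,0,\ldots,0,n)$ is the same datum as a weight-$1$ structure with Hodge numbers $(n,n)$, that is, a complex structure on $V\otimes_\mathbb{Q}\mathbb{R}$ commuting with $L$; for odd $w$ this identification also preserves polarizations, so Shimura's classification \cite{shimura} disposes of the odd-weight cases directly. The genuinely new content is the even-weight analysis, where the equivalence destroys polarizability and a polarization must be produced by hand.

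For the two divisibility bounds I would argue structurally. Since $V$ is a module over the division algebra $L$ it is free, so $[L:\mathbb{Q}]$ divides $\dim_\mathbb{Q}V=2n$. For $[F:\mathbb{Q}]\mid n$, note that the complex structure commutes with $F$ and hence restricts to each of the $g=[F:\mathbb{Q}]$ real-place factors of $V\otimes_\mathbb{Q}\mathbb{R}$; each such factor thereby receives a complex structure and so has even real dimension, forcing $g\mid n$. In the Type~IV case the same bookkeeping, carried out through the isotypic decomposition $L\otimes_\mathbb{Q}\mathbb{C}\cong\prod_{\sigma\in\Sigma(F_0)}M_q(\mathbb{C})$ and the conjugation symmetry $\overline{V^{w,0}(\sigma)}=V^{0,w}(\overline{\sigma})$, yields $n_{\sigma_i}+n_{\overline{\sigma}_i}=mq$ and again $g\mid n$.

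For the converse I would fix an $L$-module $V$ of $\mathbb{Q}$-dimension $2n$ and produce the Hodge structure in four steps: (i) choose a \emph{generalized CM-type}, a conjugation-invariant and $L$-stable splitting $V\otimes_\mathbb{Q}\mathbb{C}=V^{w,0}\oplus V^{0,w}$, which amounts to a choice of eigenspace dimensions subject to the balance conditions of the previous paragraph; (ii) encode a candidate polarization as a nondegenerate $\epsilon$-Hermitian form on $V$ relative to the Rosati involution, where the symmetry of the polarization is alternating for odd $w$ and symmetric for even $w$, and this, together with the type of the involution, fixes the sign $\epsilon$; (iii) impose the positivity condition, which at each real place becomes a signature constraint on this form; and (iv) check that a sufficiently general choice acquires no endomorphisms beyond $L$, via a dimension count showing that the locus carrying extra endomorphisms is a countable union of proper subvarieties of the parameter space.

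The crux, and the source of every exceptional case, is the interplay in steps (ii)--(iii) between the sign $\epsilon$ and the type of the positive involution. Because the symmetry of the polarization is forced by the parity of $w$, passing from odd to even weight flips it, and with it the constraints on which involutions can occur; this is exactly why totally definite quaternion algebras (Type~III) are obstructed in odd weight while totally indefinite ones (Type~II) are obstructed in even weight, why the extra Type~I exceptions ($m=2$, and $m=4$ with trivial discriminant) appear only for even $w$, and why the Type~IV list is insensitive to parity. The main obstacle is therefore to determine, type by type, exactly which signatures and which discriminant classes $\mathrm{disc}(B,{}^{-})\in F^*/(F^*)^2$ admit an $\epsilon$-Hermitian form that is simultaneously positive at every real place and generic enough to give $\mathrm{End}_{\mathbb{Q}-\mathrm{HS}}(V)=L$. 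One must then prove existence outside the listed configurations and genuine non-existence inside them, either because no form of the correct parity and signature exists — the obstruction measured by $\mathrm{disc}(B,{}^{-})$ — or because the only admissible splittings are forced to carry endomorphisms strictly larger than $L$, contradicting simplicity.
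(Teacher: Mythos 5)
This statement is Totaro's classification theorem, which the paper quotes for reference and does not prove; there is no internal proof to compare against, so the only meaningful comparison is with Totaro's own argument in \cite{totaro}. Your outline matches that argument in all essentials: the divisibility bounds via freeness of $V$ over the division algebra $L$ and the parity constraint imposed by a complex structure commuting with $F$ at each real place; the reduction of the odd-weight case to Shimura through the polarization-preserving equivalence of Remark \ref{equiv}; and, for even weight, the construction from a conjugation-invariant splitting plus an $\epsilon$-Hermitian form whose symmetry type is flipped by the parity of $w$, with genericity (in Totaro's paper, a very-general-point argument on the relevant Mumford--Tate domain) ensuring $\mathrm{End}_{\mathbb{Q}\text{-HS}}(V)=L$ and simplicity. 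You also correctly locate the source of every exceptional case in the interaction between the sign of the form and the type of the involution, and you note the two distinct failure modes (non-existence of an admissible positive form versus forced non-simplicity or excess endomorphisms, e.g.\ when the isometry group degenerates to a torus or to a product, as detected by $\mathrm{disc}(B,{}^{-})$). The one caveat is that what you have written is a strategy rather than a proof: the actual content of the theorem is the type-by-type verification of which $(m,q,\text{signature},\text{discriminant})$ configurations are obstructed, and that case analysis is entirely deferred in your sketch. As a reconstruction of the shape of Totaro's proof it is accurate; as a standalone proof it is incomplete at exactly the step where all the work lies.
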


\section{The Lefschetz Group of a $\mathbb{Q}$-Hodge Structure with Hodge Numbers $(n,0,\ldots,0,n)$}\label{lefschetz}
Let $A$ be a central simple algebra over a field $K_0$ with involution $^{-}$. Let $K$ be the subfield of $K_0$ fixed by the involution $^{-}$. Define
$$\mathrm{Sym}(A,^{-})=\{f\in A \mid \overline{f}=f\}$$
$$\mathrm{Alt}(A,^{-})=\{f\in A \mid \overline{f}=-f\}.$$

Then, following the Book of Involutions \cite{boi} and writing $[A:K_0]=q^2$, we say that the involution $^{-}$  on $A$ is \emph{orthogonal} if $K_0=K$ and $\dim_K\mathrm{Sym}(A,^{-})=\frac{q(q+1)}{2}$. We say the involution is \emph{symplectic} if $K_0=K$ and $\dim_K\mathrm{Sym}(A,^{-})=\frac{q(q-1)}{2}$. Finally, we say the involution is \emph{unitary} if $K_0\ne K$.  There are no other possibilities for the involution $^{-}$ \cite[Proposition I.2.6]{boi}. 

Letting $A^*$ denote the group of invertible elements of $A$, define the group of \emph{isometries} of $A$ by
$$\mathrm{Iso}(A,^{-})=\{g\in A^*\mid \overline{g}=g^{-1}\}.$$
Then write
$$\mathrm{Iso}(A,^{-})=
\begin{cases} O(A,^{-}) &\mbox{if } ^{-} \text{ is orthogonal}\\
Sp(A,^{-}) &\mbox{if } ^{-} \text{ is symplectic}\\
U(A,^{-}) &\mbox{if } ^{-} \text{ is unitary}.
\end{cases}$$

If the involution $^{-}$ is orthogonal, then, as an algebraic group, the kernel $O^+(A,^{-})$ of the reduced norm map $O(A, ^{-})\rightarrow \{\pm1\}$ is a  $K$-form of $SO(q)$, meaning the two groups are isomorphic over an algebraic closure of $K$. If the involution $^{-}$ is symplectic, then $Sp(A,^{-})$ is a $K$-form of $Sp(q)$ (in this case $q$ must be even). If the involution $^{-}$ is unitary, then $U(A,^{-})$ is a $K$-form of $GL(q)$  and $SU(A,^{-}):=\ker(\mathrm{Norm}_{A/K}\colon U(A,^{-}) \rightarrow \mathbb{G}_{m,K})$ is a $K$-form of $SL(q)$.

Let $V$ be a simple polarizable $\mathbb{Q}$-Hodge structure $V$ with Hodge numbers $(n,0,\ldots,0,n)$ and let $L$ be its endomorphism algebra. Using the notation introduced above as well as the notation of Section \ref{endclass}, we now list in Table \ref{exphg} Lefschetz group of $V$ depending on the type that $L$ has in Albert's classification. Note that in the table, the notation $_FV$ denotes $V$ considered as an $F$-vector space.

\begin{table}[h]
\caption{The Lefschetz Group of $V$ Depending on $L=\mathrm{End}_{\mathbb{Q}-HS}(V)$}
\label{exphg}
\centering
\begin{tabular}{|c|c|c|}
\hline
 \raisebox{0pt}[30pt][0pt]{$L$} & \multicolumn{2}{|c|}{Lef(V)}\\[0.5ex]
\cline{2-3}

& Odd Weight & Even Weight\\[0.5ex]
\hline
\hline
Type I & $R_{F/\mathbb{Q}}Sp(_FV)$ & $R_{F/\mathbb{Q}}SO(_FV)$\\[0.5ex]
\hline
Type II & $R_{F/\mathbb{Q}}Sp(B,^{-})$&$R_{F/\mathbb{Q}}O^+(B,^{-})$\\[0.5ex]
\hline
Type III & $R_{F/\mathbb{Q}}O^+(B,^{-})$&$R_{F/\mathbb{Q}}Sp(B,^{-})$\\[0.5ex]
\hline
Type IV & $R_{F/\mathbb{Q}}U(B,^{-})$&$R_{F/\mathbb{Q}}U(B,^{-})$\\[0.5ex]
\hline

\end{tabular}
\end{table}

\section{Irreducible Representations of the Hodge Group}\label{section5}

Let $V$ be a simple polarizable $\mathbb{Q}$-Hodge structure of weight $w\ge1$ with Hodge numbers $(n,0,\ldots, 0,n)$ and endomorphism algebra $L$. Let $H=Hg(V)$ be the Hodge group of $V$.

If $L$ is of Types I, II, or III in Albert's classification, then $F_0=F$ and so, letting $\Sigma(F)$ denote the set of embeddings of $F$ into $\mathbb{C}$, the action of $L\otimes_\mathbb{Q}\mathbb{C}$ on $V_\mathbb{C}:=V\otimes _\mathbb{Q}\mathbb{C}$ induces a decomposition as an $H_\mathbb{C}$-representation
\begin{equation}\label{DL0}V_\mathbb{C}= \bigoplus_{\sigma\in \Sigma(F)} qW_{\sigma}.\end{equation}

If $L$ is of Type IV, then $F_0$ is a CM field with maximal totally real subfield $F$ and $V_\mathbb{C}$ decomposes as an $H_\mathbb{C}$-representation as
\begin{equation}\label{DL1}V_\mathbb{C}= \bigoplus_{\sigma\in \Sigma(F)} \left(qW_{\sigma}\oplus qW_{\sigma}^*\right).\end{equation}

In both cases, each of the summands $W_\sigma$ for $\sigma \in  \Sigma(F)$ has complex dimension $mq$.

\begin{lem}\label{irreducible}
Each representation $W_\sigma$ of the group $H_\mathbb{C}$ is irreducible.
\end{lem}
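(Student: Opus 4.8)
The plan is to realize each $W_\sigma$ as the multiplicity space of a simple module for the endomorphism algebra $L\otimes_\mathbb{Q}\mathbb{C}$ acting on $V_\mathbb{C}$, and then to invoke the double-commutant theorem to identify these multiplicity spaces as the irreducible constituents for the commutant, which is exactly the associative algebra generated by the image of $H_\mathbb{C}$.

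First I would record the structural inputs. Since $V$ is a simple Hodge structure, its $\mathbb{Q}$-subspaces stable under $MT(V)$---equivalently under $H=Hg(V)$, since the two groups differ only by the central $\mathbb{G}_m$ acting by scalars---are precisely its sub-Hodge structures; hence $V$ is an irreducible $\mathbb{Q}[H]$-module. Moreover, the identity $L=[\mathrm{End}_\mathbb{Q}(V)]^{H}$ established above says exactly that $\mathrm{End}_H(V)=L$. Because $H$ is reductive, $V$ and $V_\mathbb{C}$ are semisimple as representations of $H$ and $H_\mathbb{C}$, and taking $H$-invariants commutes with the field extension $\mathbb{C}/\mathbb{Q}$, so that $\mathrm{End}_{H_\mathbb{C}}(V_\mathbb{C})=(\mathrm{End}_\mathbb{Q}(V))^{H}\otimes_\mathbb{Q}\mathbb{C}=L\otimes_\mathbb{Q}\mathbb{C}$. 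Write $A=L\otimes_\mathbb{Q}\mathbb{C}$ and let $A'\subseteq \mathrm{End}_\mathbb{C}(V_\mathbb{C})$ be the associative subalgebra generated by the image of $H_\mathbb{C}$. The $H_\mathbb{C}$-submodules of $V_\mathbb{C}$ are precisely the $A'$-submodules, so it suffices to prove that each $W_\sigma$ is a simple $A'$-module.

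Next I would make $A$ explicit and match its simple factors to the $W_\sigma$. In each Albert type, $A$ is a product of matrix algebras over $\mathbb{C}$: for Types I, II, and III one has $A\cong \prod_{\sigma\in\Sigma(F)}M_q(\mathbb{C})$ with $q=1,2,2$ respectively, while for Type IV one has $A\cong\prod_{\tau\in\Sigma(F_0)}M_q(\mathbb{C})$, the conjugation $\tau\mapsto\overline{\tau}$ together with the polarization identifying the factor at $\overline{\tau}$ with the dual of the factor at $\tau$. Decomposing $V_\mathbb{C}$ into its $A$-isotypic components $V_\mathbb{C}\cong\bigoplus_i S_i\otimes_\mathbb{C}\mathrm{Hom}_A(S_i,V_\mathbb{C})$, where $S_i\cong\mathbb{C}^q$ ranges over the simple $A$-modules, the factor $S_i$ contributes $q$ copies of its multiplicity space; comparing with the decompositions \eqref{DL0} and \eqref{DL1} identifies the multiplicity spaces $\mathrm{Hom}_A(S_i,V_\mathbb{C})$ with the $W_\sigma$ (and, in Type IV, with the $W_\sigma^*$). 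Since $V_\mathbb{C}$ is a semisimple $A'$-module with $\mathrm{End}_{A'}(V_\mathbb{C})=A$, the double-commutant theorem gives $A'=\prod_i\mathrm{End}_\mathbb{C}\bigl(\mathrm{Hom}_A(S_i,V_\mathbb{C})\bigr)$, acting on each multiplicity space in the tautological way; in particular every $W_\sigma$ is a simple $A'$-module, that is, an irreducible $H_\mathbb{C}$-representation.

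I expect the main obstacle to be the bookkeeping of the second step rather than any of its individual ingredients: one must verify that the abstract $A$-isotypic decomposition of $V_\mathbb{C}$ agrees with the concrete decomposition \eqref{DL0}/\eqref{DL1} built from the $\sigma$-eigenspaces of the $F$- (resp. $F_0$-) action, and, in the Type IV case, that the factor at $\overline{\tau}$ genuinely produces $W_\sigma^*$ rather than an unrelated summand. The remaining inputs---reductivity of $H$, semisimplicity of $V_\mathbb{C}$, compatibility of the endomorphism algebra with base change to $\mathbb{C}$, and the double-commutant theorem---are standard and require no computation.
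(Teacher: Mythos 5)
Your proof is correct, but it reaches the conclusion by a different route than the paper. The paper's argument is a short dimension count: since each $W_\sigma$ is nonzero, each $[W_\sigma\otimes W_\sigma^*]^{H_\mathbb{C}}=\mathrm{End}_{H_\mathbb{C}}(W_\sigma)$ has dimension at least $1$, so the diagonal terms in the decomposition of $[V_\mathbb{C}\otimes V_\mathbb{C}^*]^{H_\mathbb{C}}$ already contribute dimension at least $q^2[F_0:\mathbb{Q}]=[L:\mathbb{Q}]$; but the total is exactly $[L:\mathbb{Q}]$ because $L$ is the full endomorphism algebra, forcing each $\mathrm{End}_{H_\mathbb{C}}(W_\sigma)$ to be one-dimensional and hence each $W_\sigma$ to be irreducible (by semisimplicity). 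You instead identify $\mathrm{End}_{H_\mathbb{C}}(V_\mathbb{C})=L\otimes_\mathbb{Q}\mathbb{C}$, realize the $W_\sigma$ as the multiplicity spaces of the isotypic decomposition over this commutant, and apply the double-commutant theorem. Both arguments rest on the same two inputs --- semisimplicity of $V_\mathbb{C}$ as an $H_\mathbb{C}$-module and the identity $\mathrm{End}_{H_\mathbb{C}}(V_\mathbb{C})=L\otimes_\mathbb{Q}\mathbb{C}$ --- but the paper's count is more elementary and sidesteps the bookkeeping you correctly flag as the delicate point (matching the abstract isotypic decomposition with the concrete one in \eqref{DL0}/\eqref{DL1}), while your version buys strictly more: it pins down the full image algebra of $H_\mathbb{C}$ as $\prod_i\mathrm{End}_\mathbb{C}(W_i)$, not just the irreducibility of each factor. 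Note also that the Type IV duality issue you raise ($W_{\overline\sigma}$ versus $W_\sigma^*$) is not actually needed for the lemma: your argument shows every multiplicity space is irreducible, which covers both $W_\sigma$ and $W_\sigma^*$ regardless of how they are paired.
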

\begin{proof}
By construction each representation $W_\sigma$ is nonzero and hence $[W_\sigma\otimes W_\sigma^*]^{H_\mathbb{C}}$ is nonzero. Namely in the decomposition of $[V_\mathbb{C}\otimes V_\mathbb{C}^*]^{H_\mathbb{C}}$ coming from (\ref{DL0}) respectively (\ref{DL1}), the term 
\[\left[\bigoplus_{\sigma\in \Sigma(F)}q^2(W_\sigma\otimes W_\sigma^*)\right]^{H_\mathbb{C}} \ \ \text{ respectively } \left[\bigoplus_{\sigma\in \Sigma(F)}2q^2(W_\sigma\otimes W_\sigma^*)\right]^{H_\mathbb{C}},\]
has dimension at least $q^2[F_0:\mathbb{Q}]=[L:\mathbb{Q}]$. But since $L$ is the endomorphism algebra of $V$, we know $[V_\mathbb{C}\otimes V_\mathbb{C}^*]^{H_\mathbb{C}}$ has dimension exactly equal to $[L:\mathbb{Q}]$. Therefore, each term $[W_\sigma\otimes W_\sigma^*]^{H_\mathbb{C}}$ has dimension equal to $1$, meaning that the representation of $H_\mathbb{C}$ on $W_\sigma$ is irreducible. 
\end{proof}


The group $H_\mathbb{C}$, up to permutation of factors, has a canonical decomposition as an almost direct product of its center $Z(H)_\mathbb{C}$ and its simple factors $H_i$ given by
\[H_\mathbb{C}=Z(H)_\mathbb{C}\cdot H_1\cdots H_s.\]
Passing to Lie algebras and writing $\mathfrak{c}=\mathrm{Lie}(Z(H))_\mathbb{C}$ and $\mathfrak{g}_i=\mathrm{Lie}(H_i)$ yields
\[\mathrm{Lie}(H)_\mathbb{C}=\mathfrak{c}\times \mathfrak{g}_1\times \cdots \times \mathfrak{g}_s.\]

Hence if $W_\sigma\subset V_\mathbb{C}$ is one of the irreducible $H_\mathbb{C}$-submodules introduced above, we have a decomposition of $W_\sigma$ as a representation of $\mathrm{Lie}(H)_\mathbb{C}$ given by
\begin{equation}\label{decompW} W_\sigma=\chi\boxtimes \rho_1\boxtimes\cdots \boxtimes \rho_s,\end{equation}
where $\chi$ is a character of $\mathfrak{c}$ and the $\rho_i$ are irreducible representations of the simple factors $\mathfrak{g}_i$. Note that in this notation the $\rho_i$ are allowed to be trivial.

In order to study this irreducible representation $W_\sigma$, we first recall some facts and terminology about representations of semisimple Lie algebras.

\subsection{Representations of Semisimple Lie Algebras}

Suppose $\mathfrak{g}$ is a semisimple Lie algebra over an algebraically closed field $K$ and let $\mathfrak{h}$ be a Cartan subalgebra of $\mathfrak{g}$.  Let $R$ be the root system of $\mathfrak{g}$ with respect to $\mathfrak{h}$ and let $B=\{\alpha_1,\ldots, \alpha_l\}$ be a set of simple roots of $R$ with corresponding set of coroots $B^{\vee}=\{\alpha^{\vee}\mid \alpha\in B\}$.  If $w_0$ is the longest element of the Weyl group of $R$ with respect to the basis $B$, let $\lambda\to \lambda':=-w_0(\lambda)$ denote the opposition involution on $\mathfrak{h}^*$. If $\lambda$ is a dominant weight, we can write 
$$\lambda=\sum_{\alpha \in B} c_\alpha \cdot \alpha,$$
where all the $c_\alpha$ are in $\mathbb{Q}_{\ge0}$. By Lemma 3.3 in \cite{moonenmt}, we have that $c_\alpha + c_{\alpha'}$ lies in $\mathbb{Z}_{\ge0}$ for all $\alpha \in B$. Then define
\[\mathrm{length}(\lambda)=\min_{\alpha\in B}c_\alpha + c_{\alpha'},\]

If $R$ is an irreducible root system, we say that a dominant weight $\lambda$ is \emph{minuscule} if $\langle \lambda,\alpha^{\vee}\rangle$ lies in $\{-1,0,1\}$ for all $\alpha\in R$ \cite[Chapter VIII, \S 7.3]{bourbaki}. Then $\mathrm{length}(\lambda)$ is equal to 1 if and only if $\lambda$ is a minuscule weight and $R$ is of classical type, meaning of type $A_l$, $B_l$, $C_l$, or $D_l$ \cite[Example 3.6]{moonenmt}.

We make extensive use of Table \ref{selfdual}, reproduced from \cite{moonenmt}. The table describes for a given root system with minuscule weight $\lambda$, the corresponding representation $V(\lambda)$ (omitting the weight 0).  The table gives the dimension and autoduality of the representation $V(\lambda)$.  Note that in the table, the symbol $-$ denotes a symplectic representation, the symbol $+$ denotes an orthogonal representation, and $0$ denotes a non-self-dual representation.

\begin{table}
\caption{Minuscule weights in irreducible root systems}
\label{selfdual}
\centering
\begin{tabular}{|c|c|c|c|c|}
\hline
Root system & Minuscule weight & Representation & Dimension & Autoduality\\ [0.5ex]
\hline
$A_{l}$ & $\overline{\omega}_j (1\le j\le l)$ & $\wedge^j(\mathrm{Standard})$ & \raisebox{0pt}[15pt][0pt]{${l+1}\choose {j}$} & $(-1)^j$ \text{ if } $l=2j-1$\\
& & & & $0$ \text{ otherwise}\\
\hline
$B_{l}$& $\overline{\omega}_l$ & $\mathrm{Spin}$ & \raisebox{0pt}[12pt]{$2^l$} & $+$ \text{ if } $l\equiv 0,3\mod4$\\
&&&& $-$ \text{ if }$l\equiv 1,2\mod4$\\
\hline
$C_l$ & $\overline{\omega}_1$ & $\mathrm{Standard}$ & \raisebox{0pt}[12pt]{$2l$} & $-$\\
\hline
$D_l$ & $\overline{\omega}_1$ & $\mathrm{Standard}$ & \raisebox{0pt}[12pt]{$2l$} & $+$\\
& $\overline{\omega}_{l-1},\ \overline{\omega}_l$ & $\mathrm{Spin}^-, \text{ resp. } \mathrm{Spin}^+$ & \raisebox{0pt}[12pt]{$2^{l-1}$}& $+$ \text{ if } $l\equiv 0\mod4$\\
&&&&$-$ \text{ if } $l\equiv 2 \mod4$\\
&&&&$0$ \text{ if } $l\equiv 1 \mod2$\\
\hline
$E_6$ & $\overline{\omega}_1$ & & \raisebox{0pt}[12pt]{27}& $0$\\
&$\overline{\omega}_6$ & & \raisebox{0pt}[12pt]{27} & $0$\\
\hline
$E_7$ & $\overline{\omega}_7$ & & \raisebox{0pt}[12pt]{56} & $-1$\\
\hline

\end{tabular}
\end{table}

\subsection{The Representation $W_\sigma$}

We now return to our consideration of the $mq$-dimensional irreducible representation $W_\sigma=\chi\boxtimes \rho_1\boxtimes\cdots \boxtimes \rho_s$ of the complexified Hodge group $H_\mathbb{C}$. 

\begin{lem}\label{tablelem} Each nontrivial $\rho_i$ for $1\le i \le s$  in the above notation satisfies\begin{enumerate}
\item \label{TL1}The highest weight of $\rho_i$ is minuscule and $\mathfrak{g}_i$ is of classical type
\item \label{TL2}If $\rho_i$ is self dual, then $\rho_i$ is even-dimensional
\item \label{TL3}If $\rho_i$ is symplectic with $\dim_\mathbb{C}(\rho_i) \equiv 2 \pmod{4}$, then $\mathfrak{g}_i$ is of type $C_l$, where $l\ge1$ is odd, and the representation is the standard representation of $\mathfrak{sp}_{2l}$
\item \label{TL4}If $\rho_i$ is orthogonal with $\dim_\mathbb{C}(\rho_i) \equiv 2 \pmod{4}$, then $\mathfrak{g}_i$ is of type either:
\begin{enumerate}
\item \label{TL5}$D_l$, where $l\ge1$ is odd, and the representation is the standard representation of $\mathfrak{so}_{2l}$
\item \label{binom} $A_{2^k-1}$, where $k\ge3$, and the representation is $\bigwedge^{2^{k-1}}(\mathrm{St})$, where $\mathrm{St}$ denotes the standard representation of $\mathfrak{sl}_{2^k}$
\end{enumerate}
\end{enumerate}
\end{lem}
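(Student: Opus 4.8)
The plan is to isolate part (\ref{TL1}) as the crux and then deduce (\ref{TL2})--(\ref{TL4}) by inspection of Table \ref{selfdual}. By Remark \ref{equiv} I may replace $V$ by its associated weight-$1$ Hodge structure, which changes neither $H$ nor the representation $W_\sigma$; the point of doing so is that the Hodge cocharacter $\mu$ then acts on $V_\mathbb{C}$ with only two weights (the Hodge types $(1,0)$ and $(0,1)$), differing by $1$. Since $\mu$ takes values in $H_\mathbb{C}$ and $W_\sigma\subseteq V_\mathbb{C}$ is an $H_\mathbb{C}$-submodule, $\mu$ acts on $W_\sigma$ with weights spanning an interval of length at most $1$, and since $\mathfrak{g}=\mathrm{Lie}(H)_\mathbb{C}\subseteq V_\mathbb{C}\otimes V_\mathbb{C}^*$, the operator $\mathrm{ad}\,\mu$ has weights in $\{-1,0,1\}$.

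For part (\ref{TL1}) I would argue via the length function. On one hand, a nonzero dominant weight $\lambda=\sum_\alpha c_\alpha\,\alpha$ of an irreducible root system has every coefficient $c_\alpha$ strictly positive (the inverse Cartan matrix is positive), so each integer $c_\alpha+c_{\alpha'}$ is at least $1$ and $\mathrm{length}(\lambda)\ge 1$. On the other hand, take $\mu_i$, the projection of $\mu$ to the factor $\mathfrak{g}_i$, to be dominant after a Weyl conjugation; then $\langle\alpha,\mu_i\rangle\in\{0,1\}$ on simple roots by the $\mathrm{ad}\,\mu$ property, while the span of the $\mu_i$-weights on $\rho_i$ equals $\langle\lambda_i+\lambda_i',\mu_i\rangle=\sum_\alpha(c_\alpha+c_{\alpha'})\langle\alpha,\mu_i\rangle$. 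When $\mu_i$ acts nontrivially this span is a nonempty sum of terms each $\ge 1$, yet it is bounded by the total span $1$; hence a single simple root contributes, $\mathrm{length}(\lambda_i)=1$, and Example 3.6 of \cite{moonenmt} gives that $\lambda_i$ is minuscule with $\mathfrak{g}_i$ of classical type.

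The main obstacle is the case where $\mu_i$ acts trivially but $\rho_i$ is nontrivial --- precisely the behaviour behind Mumford's $SL_2^3$ construction, in which a single $W_\sigma$ is a genuine external tensor product of standard representations on which $\mu$ is supported on one factor only. Here the span estimate is vacuous, so I would descend through the Galois action: $H$ is defined over $\mathbb{Q}$, so $\mathrm{Gal}(\overline{\mathbb{Q}}/\mathbb{Q})$ permutes the simple factors $H_i$, and $MT(V)$, being the smallest $\mathbb{Q}$-group through which $\mu$ factors, is generated by the Galois conjugates of $\mu$. Consequently no proper product $Z(H)_\mathbb{C}\cdot\prod_{j\ne i}H_j$ can contain all these conjugates, so some $\mu^\tau$ is nontrivial on $H_i$; applying $\tau^{-1}$ identifies $\rho_i$ with a representation of a factor on which $\mu$ acts nontrivially. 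Since a field automorphism preserves Dynkin type, dimension, and the minuscule property, the previous paragraph applies and $\rho_i$ is again minuscule of classical type.

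It then remains to run parts (\ref{TL2})--(\ref{TL4}) off Table \ref{selfdual}. Every self-dual minuscule representation listed has even dimension, since $\binom{2j}{j}=2\binom{2j-1}{j-1}$ in type $A_{2j-1}$ and the remaining entries $2l$, $2^l$, $2^{l-1}$, $56$ are manifestly even; this is (\ref{TL2}). For the dimension $\equiv 2\pmod 4$ analysis the one arithmetic input is $v_2\binom{2j}{j}=s_2(j)$, the number of binary digits of $j$ (Kummer's theorem), so $\binom{2j}{j}\equiv 2\pmod 4$ precisely when $j$ is a power of $2$. Reading the autoduality column with this in hand: the symplectic possibilities collapse to the standard representation of type $C_l$ with $l$ odd (the sporadic $A_1\cong B_1\cong C_1$ entries being the standard representation of $\mathfrak{sp}_2$), which is (\ref{TL3}); the orthogonal possibilities collapse to the standard representation of $D_l$ with $l$ odd, or to $\bigwedge^{2^{k-1}}\mathrm{St}$ of $\mathfrak{sl}_{2^k}$, where the case $k=2$ is the exceptional isomorphism $A_3\cong D_3$ already counted in the $D_l$ family, leaving $k\ge 3$ for the genuinely type-$A$ entries, which is (\ref{TL4}).
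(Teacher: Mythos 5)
Your proof is correct, and its mathematical content coincides with the paper's: both reduce the lemma to the assertion that each nontrivial $\rho_i$ has minuscule highest weight (equivalently $\mathrm{length}(\lambda_i)=1$) because the Hodge decomposition of $V_\mathbb{C}$ has only two nonzero pieces, and both then read off items (\ref{TL2})--(\ref{TL4}) from Table \ref{selfdual} using the fact that $\binom{2z}{z}\equiv 2\pmod 4$ exactly when $z$ is a power of $2$. The one real difference is that the paper obtains the length bound by citing Moonen's Theorem 3.11 with $N=1$, whereas you rederive that theorem in this special case: the span computation $\langle\lambda_i+\lambda_i',\mu_i\rangle=\sum_\alpha(c_\alpha+c_{\alpha'})\langle\alpha,\mu_i\rangle\le 1$ for a dominant $\mu_i$ with $\mathrm{ad}\,\mu$-weights in $\{-1,0,1\}$, together with the Galois-conjugation step to reach the simple factors on which $\mu$ itself acts trivially, is precisely the standard proof of that theorem. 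This buys self-containedness (and correctly isolates the Mumford-type factors as the delicate case) at the cost of length; your normalization via the weight-$1$ avatar of Remark \ref{equiv} is legitimate here since $h_w|_{U_1}$ and $h_1|_{U_1}$ differ by the isogeny $z\mapsto z^w$ of $U_1$ and so have the same Zariski closure, even though polarizability is not preserved for even $w$.
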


\begin{proof}
Moonen proves in \cite[Theorem 3.11]{moonenmt} that for any polarizable $\mathbb{Q}$-Hodge structure $V$ of weight $w$, the length of the highest weight of $\rho_i$ is bounded above by the integer $N$, where $N+1$ is the number of integers $p$ such that $V^{p,w-p}\ne0$ in the Hodge decomposition of $V_\mathbb{C}$. A $\mathbb{Q}$-Hodge structure with Hodge numbers $(n,0,\ldots,0,n)$ thus has $N=1$. So the highest weight $\lambda_i$ of $\rho_i$ must have length $1$, meaning that $\lambda_i$ is minuscule and $\mathfrak{g}_i$ is of classical type \cite[Section 3.6]{moonenmt}. Table \ref{selfdual} then yields the rest of the result, making use, for Item \ref{binom}, of the combinatorial fact that, for any integer $z\ge 0$, the binomial coefficient ${{2z}\choose z}$ is congruent to 2 mod 4 if and only if $z$ is a power of 2.
\end{proof}
\begin{lem}\label{goursat}
Suppose that the Hodge group $H=Hg(V)$ is semisimple and that for each of irreducible representations $W_\sigma$ for $\sigma \in \Sigma(F)$, there is only one nontrivial $\rho_i$. If none of these representations $\rho_i$ are of type $D_4$, then we have the equality
\[\mathrm{Lie}(H)_\mathbb{C}=\prod_{\sigma\in \Sigma(F)}(\mathfrak{g}_i)_\sigma.\]
\end{lem}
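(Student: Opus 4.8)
My plan is to show that the assignment $\sigma \mapsto (\mathfrak{g}_i)_\sigma$, sending each embedding to the unique simple factor acting nontrivially on $W_\sigma$, is a \emph{bijection} between $\Sigma(F)$ and the set of simple factors $\{\mathfrak{g}_1,\ldots,\mathfrak{g}_s\}$ of $\mathrm{Lie}(H)_\mathbb{C}$; the displayed equality is exactly the assertion that each factor arises for one and only one $\sigma$. Since $H$ is semisimple the central character $\chi$ in (\ref{decompW}) is trivial, so by hypothesis each $W_\sigma$ is isomorphic to the single nontrivial $\rho_{i(\sigma)}$, a nontrivial and hence faithful irreducible representation of the simple factor $\mathfrak{g}_{i(\sigma)}$. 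Surjectivity of $\sigma \mapsto i(\sigma)$ is immediate: the representation of $\mathrm{Lie}(H)_\mathbb{C}$ on $V_\mathbb{C}$ is faithful, and a factor that acted trivially on every $W_\sigma$ (equivalently on every $W_\sigma^*$) would act trivially on all of $V_\mathbb{C}$ by (\ref{DL0}) or (\ref{DL1}). The content is therefore injectivity, and I would organize it as a Goursat argument: projecting to the isotypic blocks shows that $\mathrm{Lie}(H)_\mathbb{C}$ is a subalgebra of $\prod_{\sigma}\mathfrak{g}_{i(\sigma)}$ surjecting onto each factor, so its simple factors are diagonals obtained by grouping the $\sigma$ into blocks on which a single abstract simple algebra $\mathfrak{g}$ embeds diagonally via isomorphisms; injectivity is precisely the statement that every block is a singleton.

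The two inputs that control the block sizes are the endomorphism count and Galois equivariance. First, the computation in the proof of Lemma \ref{irreducible} gives $\dim_\mathbb{C}[V_\mathbb{C}\otimes V_\mathbb{C}^*]^{H_\mathbb{C}} = [L:\mathbb{Q}]$, and comparing this with the block decomposition forces the summands of $V_\mathbb{C}$ (the $W_\sigma$, together with the $W_\sigma^*$ in the Type IV case) to be \emph{pairwise non-isomorphic} as $H_\mathbb{C}$-representations; in particular no two distinct embeddings can give isomorphic $W_\sigma$. Second, because $V$, $H$ and $F$ are defined over $\mathbb{Q}$, the absolute Galois group permutes the $\sigma$-isotypic pieces compatibly with its transitive action on $\Sigma(F)$, so $\gamma\cdot\mathfrak{g}_{i(\sigma)} = \mathfrak{g}_{i(\gamma\sigma)}$ and all blocks share a common size. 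Putting these together, if two distinct embeddings $\sigma\neq\tau$ lay in one block, then $W_\tau \cong W_\sigma\circ\psi$ for some automorphism $\psi$ of the shared simple algebra $\mathfrak{g}$; an inner $\psi$ would give $W_\tau\cong W_\sigma$, contradicting the non-isomorphism just established, so $\psi$ must be \emph{outer}.

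It then remains to rule out two inequivalent minuscule representations of equal dimension $mq$ related by an outer automorphism. Here I would read off the possibilities from Lemma \ref{tablelem} and Table \ref{selfdual} together with the outer automorphism groups of the classical types. For $B_l$ and $C_l$ there are no outer automorphisms, so this cannot occur. For $A_l$ the outer automorphism is duality, giving $W_\tau\cong W_\sigma^*$; and for $D_l$ with $l$ odd the outer automorphism carries each half-spin representation to its dual, again giving $W_\tau\cong W_\sigma^*$. In all of these cases $W_\tau\cong W_\sigma^*$ is impossible: in Type IV the dual $W_\sigma^*$ is itself one of the listed summands, so $W_\tau\cong W_\sigma^*$ violates pairwise non-isomorphism, while in Types I, II, III each $W_\sigma$ is self-dual (the Lefschetz group preserves a form on each summand), so $W_\tau\cong W_\sigma^*\cong W_\sigma$, once more a contradiction. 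Hence every such block is a singleton and the bijection follows. The genuinely resistant configuration is $D_l$ with $l$ even, where the two half-spin representations are non-isomorphic, self-dual, equidimensional, and interchanged by the outer automorphism, so the reduction to duality fails; the hypothesis excludes $D_4$, the critical instance, because there triality makes the vector representation and the two half-spin representations all $8$-dimensional and mutually conjugate, defeating any argument that treats the outer automorphism as a single involution.

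I expect this final case analysis to be the main obstacle: everything up to the reduction ``$\psi$ is outer'' is formal, resting only on semisimplicity (Remark \ref{ss}), faithfulness, the endomorphism count of Lemma \ref{irreducible}, and Galois descent. The real work is showing that one simple factor cannot govern two embeddings. The clean cases collapse to the relation $W_\tau\cong W_\sigma^*$ and are eliminated by duality together with the endomorphism count, but the half-spin phenomenon for even $D_l$ — of which $D_4$ with its triality is the instance not reachable by these methods — is exactly what must be assumed away in the statement.
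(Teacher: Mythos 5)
Your strategy is the same as the paper's: reduce to the inclusion $\mathrm{Lie}(H)_\mathbb{C}\subseteq\prod_\sigma(\mathfrak{g}_i)_\sigma$ with surjection onto each factor, observe via the endomorphism count of Lemma \ref{irreducible} that the $W_\sigma$ (and, in Type IV, the $W_\sigma^*$) are pairwise non-isomorphic, and then rule out a Goursat diagonal by analyzing the isomorphism identifying two factors. You are in fact more careful than the paper at the decisive step: the paper simply asserts that, away from $D_4$, any isomorphism between $(\mathfrak{g}_i)_{\sigma_1}$ and $(\mathfrak{g}_i)_{\sigma_2}$ is induced by an isomorphism of the underlying representations, whereas you correctly note that an \emph{outer} identification need not be, and instead you track where the outer automorphism sends the representation ($\rho\mapsto\rho^*$ for $A_l$ and for the half-spin representations of odd $D_l$; $\rho\mapsto\rho$ for the standard representation of $D_l$), closing each case either by self-duality of the $W_\sigma$ in Types I--III (the polarization pairs each $F\otimes_\mathbb{Q}\mathbb{C}$-isotypic block with itself) or by the fact that in Type IV the dual $W_\sigma^*$ is itself one of the pairwise non-isomorphic summands in (\ref{DL1}). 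All of that is sound.

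The gap is the one you concede in your final paragraph: for $\mathfrak{g}_i$ of type $D_l$ with $l$ even and $l\ge 6$ acting by a half-spin representation, the outer automorphism exchanges $\mathrm{Spin}^+$ and $\mathrm{Spin}^-$, which are inequivalent, equidimensional, and each self-dual, so neither the ``inner $\Rightarrow$ isomorphic'' nor the ``outer $\Rightarrow$ dual'' contradiction is available, and the hypothesis of the lemma removes only $l=4$. This configuration is permitted by Lemma \ref{tablelem} and Table \ref{selfdual} (half-spin of even $D_l$ is minuscule and self-dual of dimension $2^{l-1}\equiv 0\bmod 4$), so your case analysis does not cover the statement as written. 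To be fair, this is exactly the point the paper's own proof elides with the unargued claim quoted above, and it is invisible in every application of the lemma in the paper: the single nontrivial $\rho_i$ that actually occurs downstream is always a standard representation of $\mathfrak{sl}$, $\mathfrak{sp}$, or $\mathfrak{so}$, or $\bigwedge^{2^{k-1}}$ of the standard representation of $\mathfrak{sl}_{2^k}$, each of which is either fixed up to isomorphism by every automorphism or carried to its dual. So you have located the genuine weak point of the argument rather than introduced a new one; but your proposal, like the paper's proof, does not actually close it --- either the even half-spin case needs a separate argument, or the hypothesis ``not of type $D_4$'' should be read as excluding half-spin representations of all even $D_l$.
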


\begin{proof}
For any polarizable $\mathbb{Q}$-Hodge structure $U$ and any positive integer $m\ge 1$, the Hodge group of the direct sum of $m$ copies of $U$ is isomorphic to the Hodge group of $U$ acting diagonally on $mU$  \cite[Remark 1.8]{moonenmt}. Thus for each of the representations $W_\sigma$, we have $Hg(qW_\sigma)=Hg(W_\sigma)$. Hence the decompositions of (\ref{DL0}) and (\ref{DL1}) yield the inclusion 
\begin{equation}\label{lieinc}\mathrm{Lie}(H)_\mathbb{C}\subseteq\prod_{\sigma\in \Sigma(F)} (\mathfrak{g}_i)_\sigma,\end{equation} where, in addition, we know that $\mathrm{Lie}(H)_\mathbb{C}$ surjects onto each of the $(\mathfrak{g}_i)_\sigma$ factors. 

If this inclusion is strict, then there must exist an isomorphism between two factors $(\mathfrak{g}_i)_{\sigma_1}$ and $(\mathfrak{g}_i)_{\sigma_2}$ whose graph gives the representation $(\rho_i)_{\sigma_1}\oplus (\rho_i)_{\sigma_2}$. However, since $(\mathfrak{g}_i)_{\sigma_1}$ is not of type $D_4$, its outer automorphism group is either trivial or is $\mathbb{Z}/2\mathbb{Z}$ \cite[Chapter 20]{fulton}. 
Hence, an isomorphism between $(\mathfrak{g}_i)_{\sigma_1}$ and $(\mathfrak{g}_i)_{\sigma_2}$ is induced by conjugation by an isomorphism between the underlying representations $W_{\sigma_1}$ and $W_{\sigma_2}$. But this contradicts the fact that the representations $W_\sigma$ were defined according to the decomposition  $L\otimes_\mathbb{Q}\mathbb{C} \cong \prod_{\sigma\in \Sigma(F)} M_q(\mathbb{C})$. Hence no such isomorphism exists and so the inclusion in (\ref{lieinc}) must be an equality. 
\end{proof}

\section{$SL(2)$-factors and the Hodge Group}
Let $V$ be a simple polarizable $\mathbb{Q}$-Hodge structure of weight $w\ge1$ with Hodge numbers $(n,0,\ldots, 0,n)$ and endomorphism algebra $L$.  Recall the definition of the Mumford-Tate group $M=MT(V)$ of $V$ as the smallest $\mathbb{Q}$-algebraic group contained in $GL(V)$ such that the homomorphism
\[\gamma:=h\circ \mu\colon \mathbb{G}_{m,\mathbb{C}}\rightarrow GL(V)_\mathbb{C}\]
factors through $M_\mathbb{C}$. Defining the representation $\phi$ to be the tautological representation
\[\phi\colon M\rightarrow GL(V),\]
the weights in $V_\mathbb{C}$  of the composition $\phi\circ \gamma$ are exactly the cocharacters $z\mapsto z^{-p}$, for $p$ an integer such that $V^{p,w-p}\ne 0$, meaning that the only two possible weights are  $z\mapsto z^{-w}$ or $z\mapsto 1$.

Observe that if the Hodge group $H=Hg(V)$ has decomposition $H_\mathbb{C}=Z(H)_\mathbb{C}\cdot H_1\cdots H_s,$
then the Mumford-Tate group has decomposition
\[M_\mathbb{C}=Z(M)_\mathbb{C}\cdot H_1\cdots H_s.\]

Passing to Lie algebras and writing $\mathfrak{c}=\mathrm{Lie}(Z(H))_\mathbb{C}$, $\mathfrak{c'}=\mathrm{Lie}(Z(M))_\mathbb{C}$, and $\mathfrak{g}_i=\mathrm{Lie}(H_i)$ yields
\[\mathrm{Lie}(H)_\mathbb{C}=\mathfrak{c}\times \mathfrak{g}_1\times \cdots \times \mathfrak{g}_s\]
\[\mathrm{Lie}(M)_\mathbb{C}=\mathfrak{c'}\times \mathfrak{g}_1\times \cdots \times \mathfrak{g}_s.\]

Now, for $\sigma \in \Sigma(F)$, let $W_\sigma$ be an irreducible representation of $M_\mathbb{C}$ defined as in Section \ref{section5} with decomposition as a representation of $\mathrm{Lie}(M)_\mathbb{C}$ given by
\begin{equation}\label{irrepW}W=\chi\boxtimes \rho_1\boxtimes\cdots \boxtimes \rho_s,\end{equation}
where $\chi$ is a character of $\mathfrak{c}'$ and the $\rho_i$ are irreducible representations of the simple factors $\mathfrak{g}_i$. 

Write the weights in $W_\sigma$ of the homomorphism $\gamma\colon \mathbb{G}_{m,\mathbb{C}}\rightarrow M_\mathbb{C}$ as $z\mapsto (z^{-l_0}, z^{-l_1}, \ldots, z^{-l_s})$ according to the decomposition $W_\sigma=\chi\boxtimes \rho_1\boxtimes\cdots \boxtimes \rho_s$.

\begin{lem}\label{sl2lem}
Suppose that $\mathfrak{g}_i\cong \mathfrak{sl}_2$ for some $1\le i \le s$. Denote the two possible values of $l_i$ by $\alpha$ and $\beta$ and denote the possible values of $\left(\sum_{j=0}^s l_j\right)-l_i$ by $\lambda_1,\ldots, \lambda_r$ for $r=\frac{\dim_\mathbb{C}W_\sigma}{2}$. Then either $\alpha=\beta=0$ or $\lambda_1=\cdots=\lambda_r=0$.
\end{lem}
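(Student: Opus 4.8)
The plan is to pit the two opposite weights contributed by the $\mathfrak{sl}_2$-factor $\mathfrak{g}_i$ against the fact that $\gamma$ realizes only two weights on all of $V_\mathbb{C}$. First I dispose of the degenerate possibility: if $\rho_i$ is the trivial representation then $l_i$ is identically zero, so $\alpha=\beta=0$ and the first alternative of the conclusion holds. Assume therefore that $\rho_i$ is nontrivial. Since $\mathfrak{g}_i\cong\mathfrak{sl}_2$ is of type $A_1$, Lemma~\ref{tablelem} forces $\rho_i$ to be the unique minuscule representation of $A_1$, namely the two-dimensional standard representation. Its two weights form a single Weyl orbit and hence are negatives of one another, so---because the decomposition in (\ref{irrepW}) isolates the central character $\chi$ (recorded by $l_0$) from the simple factors---the two values $\alpha,\beta$ of $l_i$ are exactly the $\mathfrak{g}_i$-weights of $\rho_i$ and satisfy $\alpha=-\beta$ with $\alpha\neq 0$.

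Next I set up the two constraints on the weights. By the decompositions (\ref{DL0}) and (\ref{DL1}), $W_\sigma$ is an $M_\mathbb{C}$-submodule of $V_\mathbb{C}=V^{w,0}\oplus V^{0,w}$, so the total weight $\sum_{j=0}^{s}l_j$ of each weight vector of $W_\sigma$ is one of the two weights by which the Hodge cocharacter acts on the summands $V^{w,0}$ and $V^{0,w}$. Using the circle group $U_1$ from the definition of $Hg(V)$---the cocharacter that commutes with complex conjugation, which interchanges $V^{w,0}$ and $V^{0,w}$---these two weights are symmetric about the origin; write them as $c$ and $-c$. Since $W_\sigma$ is the external tensor product in (\ref{irrepW}), its weights are precisely the sums of a weight of $\rho_i$ with a weight of the complementary representation obtained by omitting the factor $\rho_i$; the weights of that complement are exactly the $r=\tfrac{1}{2}\dim_\mathbb{C}W_\sigma$ quantities $\lambda_k=\big(\sum_{j=0}^{s}l_j\big)-l_i$.

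The combinatorial heart is then immediate. Fix one value $\lambda_k$. Because every pairing of a weight of $\rho_i$ with a weight of the complement occurs in $W_\sigma$, both $\alpha+\lambda_k$ and $-\alpha+\lambda_k$ are total weights of $W_\sigma$ and hence lie in $\{c,-c\}$. As $\alpha\neq 0$ these two numbers are distinct, so the two-element set $\{\alpha+\lambda_k,-\alpha+\lambda_k\}$ must equal $\{c,-c\}$; adding its two elements gives $2\lambda_k=c+(-c)=0$, whence $\lambda_k=0$. Since $k$ was arbitrary, either $\alpha=\beta=0$ or $\lambda_1=\cdots=\lambda_r=0$, which is the desired dichotomy. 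Note that this argument uses only that $W_\sigma\subseteq V_\mathbb{C}$ and that $\gamma$ has two symmetric weights there, so it is uniform across all four Albert types.

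The step I expect to be the main obstacle is the symmetry claim in the second paragraph, namely that the two weights on $V_\mathbb{C}$ have midpoint zero. This is where one must be careful with the normalization: the cocharacter $h\circ\mu$ itself acts on $V^{w,0}$ and $V^{0,w}$ with weights $-w$ and $0$, whose midpoint is $-w/2$, so one should instead compute with the circle-group cocharacter defining the Hodge group (equivalently, correct $h\circ\mu$ by the central weight cocharacter, the scaling homomorphism $\mathbb{G}_m\to GL(V_\mathbb{C})$ that acts trivially on every simple factor and shifts only $l_0$). This recentering leaves the $\mathfrak{sl}_2$-weights $\alpha,\beta$ untouched while symmetrizing the two total weights, and it is precisely what makes the common value of the $\lambda_k$ equal to $0$ rather than to a nonzero shift. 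Everything else---the opposite-weights property of the standard representation of $\mathfrak{sl}_2$ and the external-tensor-product description of the weights of $W_\sigma$---is routine bookkeeping.
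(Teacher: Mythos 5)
Your combinatorial core is exactly the paper's: play the two opposite weights $\pm\alpha$ contributed by the $\mathfrak{sl}_2$-factor against the fact that $\gamma$ realizes only two total weights on $V_\mathbb{C}$, so that (when $\alpha\neq\beta$) the pair $\{\alpha+\lambda_k,\ \beta+\lambda_k\}$ is forced to equal the full two-element set of total weights for every $k$, whence all the $\lambda_k$ coincide. The one step where you genuinely diverge is in pinning down the common value. The paper keeps $\gamma=h\circ\mu$, whose total weights are $-w$ and $0$, and uses that $\prod_{j\ne i}\mathfrak{g}_j$ is contained in $\mathfrak{sl}_r$, so the $r$ equal weights of $\boxtimes_{j\ne i}\rho_j$ sum to zero and therefore each vanish; you instead recenter by the homothety cocharacter so that the two total weights become $\pm c$ and read off the vanishing directly from the midpoint. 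Your recentering is legitimate (it shifts only the central coordinate $l_0$ and leaves $\alpha,\beta$ and the simple-factor weights untouched), but be aware that it redefines the $\lambda_k$: what you show vanishes is the recentered quantity, which in the lemma's own normalization is $\lambda_k-\tfrac{w}{2}$. To recover the stated conclusion literally (with $\lambda_k=(\sum_{j=0}^{s}l_j)-l_i$ and total weights $-w$, $0$) one still wants the trace-zero observation, or one should say explicitly that the real content of the second alternative is that the component of $\gamma$ in $\prod_{j\ne i}\mathfrak{g}_j$ is trivial --- which is precisely what Corollary \ref{sl2cor} uses. (The paper's own bookkeeping of the central character $c$ inside this proof is not entirely consistent either, so this is a normalization wrinkle shared by both arguments rather than a defect peculiar to yours.)

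One inaccuracy to fix: nontriviality of the representation $\rho_i$ does not imply $\alpha\neq 0$. The numbers $\alpha,\beta$ are the weights of $\rho_i$ evaluated on the $\mathfrak{g}_i$-component of the specific cocharacter $\gamma$, and that component can vanish even when $\rho_i$ is the standard representation; Lemma \ref{tablelem} constrains the representation, not the cocharacter. This is harmless for the lemma --- if $\alpha=0$ then $\beta=-\alpha=0$ and the first alternative holds --- but your case split should be on whether $\alpha=0$, not on whether $\rho_i$ is trivial, and the sentence claiming $\alpha\neq 0$ from nontriviality of $\rho_i$ should be deleted.
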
 

\begin{proof}
The only two weights of $\phi\circ \gamma$ in $W_\sigma$ are the cocharacters $z\mapsto z^{-w}$ and $z\mapsto 1$. Hence for any weight of $\gamma\colon \mathbb{G}_{m,\mathbb{C}}\rightarrow M_\mathbb{C}$ written as $z\mapsto (z^{-c}, z^{-l_1}, \ldots, z^{-l_s})$, we must have the sum
$\sum_{j=1}^s l_j$ either equal to $w-c$ or equal to $0$. In other words, half of the elements in the set
\[\{\alpha+\lambda_k\mid 1\le k\le r\} \cup \{\beta+\lambda_k\mid 1\le k\le r\}\]
are equal to $0$ and half are equal to $w-c$.

If $\alpha+\lambda_k=0$ and $\beta+\lambda_k=w-c$ for all $1\le k \le r$, then $\lambda_k=-\alpha=w-c-\beta$ for all $1\le k\le r$. But then the representation $\boxtimes_{k\ne i}\rho_k$ is just multiplication by $z^{\alpha}$. But $\prod_{k\ne i}\mathfrak{g}_k$ is contained in $\mathfrak{sl}_{r}$ and hence $\sum_{k=1}^r\lambda_k=r(-\alpha)=0$. Thus $\lambda_k=0$ for all $1\le k\le r$ and we are done. 

If it is not the case that $\alpha+\lambda_k=0$ and $\beta+\lambda_k=w-c$ for all $1\le k \le r$, then there exists $t\in \{1,\ldots, r\}$ such that without loss of generality $\lambda_1=\cdots=\lambda_t=-\alpha$ and $\lambda_{t+1}=\cdots=\lambda_r=w-c-\alpha$. Adding $\beta$ yields $\beta-\alpha=0$. Then since $\rho_i$ acts as multiplication by $z^{-\alpha}$ and $\mathfrak{g_i}=\mathfrak{sl}_2$, we have $2\alpha$=0, hence $\alpha=\beta=0$. 
\end{proof}

\begin{cor}\label{sl2cor}
There is no simple polarizable $\mathbb{Q}$-Hodge structure $V$ of weight $w\ge1$ with Hodge numbers $(n,0,\ldots, 0,n)$ such that
\[Hg(V)_\mathbb{C}=SL(2,\mathbb{C})\times G,\]
where $G$ is a nontrivial semisimple $\mathbb{C}$-group having no simple factors isomorphic to $SL(2,\mathbb{C})$ and $Hg(V)_\mathbb{C}$ acts irreducibly on $V_\mathbb{C}$ by the product of the standard representation of $SL(2,\mathbb{C})$ with a representation of $G$.
\end{cor}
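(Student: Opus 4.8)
The plan is to argue by contradiction using Lemma \ref{sl2lem} applied to the $\mathfrak{sl}_2$-factor. Suppose such a $V$ exists. Since $Hg(V)_\mathbb{C}=SL(2,\mathbb{C})\times G$ is semisimple and acts irreducibly on $V_\mathbb{C}$, the decomposition of $V_\mathbb{C}$ into the irreducible constituents $W_\sigma$ of Section \ref{section5} must consist of a single summand, so $V_\mathbb{C}=W_\sigma$ is itself one of these irreducible $M_\mathbb{C}$-representations. In its decomposition (\ref{irrepW}) I would label the factors so that $\mathfrak{g}_1=\mathfrak{sl}_2$ is the Lie algebra of the $SL(2,\mathbb{C})$-factor, $\rho_1$ is the standard representation, and $\rho_2\boxtimes\cdots\boxtimes\rho_s$ is the given representation of $G$. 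Applying Lemma \ref{sl2lem} with $i=1$ yields the dichotomy that either $\alpha=\beta=0$ or $\lambda_1=\cdots=\lambda_r=0$. The one elementary input I need is that, because $\rho_1$ is the standard representation of $\mathfrak{sl}_2$, the element $\gamma$ acts on it with determinant $1$, so the two weight exponents satisfy $\alpha+\beta=0$.

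I would dispose of the branch $\lambda_1=\cdots=\lambda_r=0$ by a weight count. In this case the total weight exponent $l_i+\lambda_k$ of $\phi\circ\gamma$ on each weight vector of $V_\mathbb{C}$ equals $\alpha$ or $\beta$. Because $V$ has Hodge numbers $(n,0,\ldots,0,n)$, both $V^{w,0}$ and $V^{0,w}$ are nonzero, so the two distinct exponents $0$ and $w$ of $\phi\circ\gamma$ both occur; hence $\{\alpha,\beta\}=\{0,w\}$. Combined with $\alpha+\beta=0$ this forces $w=0$, contradicting $w\ge1$.

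The branch $\alpha=\beta=0$ is where the real work lies, and I expect it to be the main obstacle. It says precisely that the Hodge cocharacter $\gamma$ acts trivially on the $SL(2,\mathbb{C})$-factor, since $\rho_1$ is faithful for $\mathfrak{sl}_2$ and hence the $\mathfrak{sl}_2$-component of $d\gamma$ vanishes. To obtain a contradiction I would exploit the minimality built into the definition of $M=MT(V)$. Because $G$ has no $SL(2,\mathbb{C})$-factor, $SL(2,\mathbb{C})$ is the unique $\mathbb{C}$-simple factor of its type, so Galois cannot permute it with any other factor; consequently it is the complexification of an absolutely simple $\mathbb{Q}$-rational factor $N$ of $M$ with $N_\mathbb{C}=SL(2,\mathbb{C})$. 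Writing $M$ as the almost-direct product of $N$ with the $\mathbb{Q}$-subgroup $M''$ generated by the remaining $\mathbb{Q}$-simple factors together with the center, I have $\mathrm{Lie}(M)_\mathbb{C}=\mathfrak{sl}_2\times\mathrm{Lie}(M'')_\mathbb{C}$, and the vanishing of the $\mathfrak{sl}_2$-component of $d\gamma$ gives $d\gamma\in\mathrm{Lie}(M'')_\mathbb{C}$. Hence $\gamma$ factors through $M''_\mathbb{C}\subsetneq M_\mathbb{C}$, contradicting the fact that $M$ is the smallest $\mathbb{Q}$-subgroup of $GL(V)$ through which $\gamma$ factors.

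The only delicate point is the descent in the last paragraph: one must check that the single $SL(2,\mathbb{C})$-factor genuinely descends to a $\mathbb{Q}$-rational direct factor $N$ of $M$, so that deleting it leaves an honest $\mathbb{Q}$-subgroup, and that a cocharacter whose Lie-algebra image lies in $\mathrm{Lie}(M'')_\mathbb{C}$ has its whole image inside $M''_\mathbb{C}$. Both follow from the uniqueness of the $SL(2,\mathbb{C})$-factor and the structure theory of reductive $\mathbb{Q}$-groups, and everything else reduces to the short weight computation above.
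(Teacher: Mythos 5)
Your proof is correct and follows the paper's own route: apply Lemma \ref{sl2lem} to the $\mathfrak{sl}_2$-factor and derive a contradiction with the minimality of the Mumford--Tate group, using that the $SL(2,\mathbb{C})$-factor is the unique simple factor of its isomorphism type (since $G$ has no $SL(2)$-factors) and hence descends to a $\mathbb{Q}$-rational factor. The only cosmetic difference is that you dispose of the branch $\lambda_1=\cdots=\lambda_r=0$ by a direct weight count ($\{\alpha,\beta\}=\{0,w\}$ contradicts $\alpha+\beta=0$), whereas the paper runs the same descent/minimality argument in both branches, concluding that $Hg(V)$ would otherwise be a $\mathbb{Q}$-form of $SL(2)$ or of $G$.
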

\begin{proof}
If such a Hodge structure $V$ existed, then by Lemma \ref{sl2lem}, the homomorphism $\gamma$ would actually factor through either $Z(M)_\mathbb{C}\cdot SL(2,\mathbb{C})$ or through $Z(M)_\mathbb{C}\cdot G$. But then, since $G$ has no simple factors isomorphic to $SL(2,\mathbb{C})$, the Hodge group would be a $\mathbb{Q}$-form of $SL(2)$ or a $\mathbb{Q}$-form of $G$, which is a contradiction.
\end{proof}

\section{Lower Bound on the Rank of the Hodge Group}

The following lemma restates a result proved by Orr \cite[Theorem 1.1]{orr}
for abelian varieties, which generalized an earlier result by Ribet \cite{ribet2} for abelian varieties of CM-type.

\begin{lem}\label{bound}
Let $V$ be a polarizable $\mathbb{Q}$-Hodge structure of weight $w\ge1$ with Hodge numbers $(n,0,\ldots, 0,n)$ whose endomorphism algebra $L$ is commutative. Then the rank, as a $\mathbb{Q}$-algebraic group, of the Hodge group $Hg(V)$ satisfies
$$\mathrm{Rank}(Hg(V))\ge\mathrm{log}_2(2n).$$
\end{lem}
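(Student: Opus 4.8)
```latex
\textbf{Proof proposal.}

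The plan is to exploit the weight structure of $V$ together with the combinatorial constraint coming from the fact that the homomorphism $\gamma = h\circ\mu$ has only two weights in $V_\mathbb{C}$, namely $z\mapsto z^{-w}$ and $z\mapsto 1$. Since $L$ is commutative, it is either a totally real field or a CM field, and in either case we reduce the problem to counting the number of distinct characters through which a maximal torus of the Hodge group must act on $V_\mathbb{C}$. The cocharacter $\gamma$ factors through a maximal torus $T$ of $Hg(V)_\mathbb{C}$ (after adjusting by the central $\mathbb{G}_m$), so the quantity I want to bound is the dimension of the image of the weight lattice, which is controlled by $\mathrm{Rank}(Hg(V))$.

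First I would set up the weight decomposition. Using the description of $V_\mathbb{C}$ as in Section \ref{section5}, a maximal torus $T\subset Hg(V)_\mathbb{C}$ acts on $V_\mathbb{C}$ with a collection of characters, and the key point is that these characters, when composed with $\gamma$, must all land in $\{-w,0\}$ as cocharacters (the two possible weights of $\phi\circ\gamma$). Next I would observe that $\gamma$ together with its complex-conjugate cocharacter $\bar\gamma$ generate the cocharacter lattice of $T$ over $\mathbb{Q}$ up to the central direction; this is where the hypothesis that $L$ is commutative, so that $V_\mathbb{C}$ breaks into one-dimensional $T$-weight spaces over the field generated by $L$, enters crucially. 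The $2n$ distinct weight spaces in $V_\mathbb{C}$ (since $\dim_\mathbb{C}V^{w,0}=\dim_\mathbb{C}V^{0,w}=n$) give $2n$ characters of $T$, and since $V$ is a faithful $Hg(V)$-module these characters span the character lattice.

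The heart of the argument is the counting step, which I expect to be the main obstacle. The characters of $T$ on $V_\mathbb{C}$ form a subset of the character lattice $X^*(T)$, a free abelian group of rank $\mathrm{Rank}(Hg(V))$. I would argue that because each weight of $\gamma$ on each weight space is constrained to two values and the relevant representations are minuscule (by Lemma \ref{tablelem}, the highest weights are minuscule of classical type), the number of distinct characters appearing is at most $2^{\mathrm{Rank}(Hg(V))}$; this is the standard bound for the number of weights of a minuscule representation of a group of a given rank, and it is exactly where the $\log_2$ in the statement comes from. The delicate part is to show that all $2n$ characters are genuinely distinct and hence that $2n \le 2^{\mathrm{Rank}(Hg(V))}$, which rearranges to the desired inequality $\mathrm{Rank}(Hg(V))\ge \log_2(2n)$. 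Making the distinctness precise, and handling both the totally real and CM cases uniformly, is the step requiring the most care; here I would lean on Orr's argument in \cite{orr}, which establishes precisely this counting bound for the Mumford-Tate torus of an abelian variety, and translate it to the present Hodge-theoretic setting via the weight-$1$ equivalence of Remark \ref{equiv} for the odd-weight case and a direct adaptation for the even-weight case.
```
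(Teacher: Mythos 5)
Your overall strategy --- counting the characters of a maximal torus acting on $V_\mathbb{C}$ and bounding that count by a power of $2$ using the fact that $\gamma=h\circ\mu$ has only the two weights $z\mapsto z^{-w}$ and $z\mapsto 1$ --- is exactly the approach of the paper, which itself adapts Orr's argument. However, two of your intermediate claims would fail as stated. First, $\gamma$ together with $\bar\gamma$ does \emph{not} in general generate the cocharacter lattice of a maximal torus up to the central direction: for a CM Hodge structure the Mumford--Tate group can be a torus of rank up to $n+1$, and two cocharacters cannot span such a lattice. What is true, and what the paper uses, is that the set $S'$ of cocharacters of $T_\mathbb{C}$ (with $T$ a maximal torus of $M=MT(V)$) that are $M(\mathbb{C})$-conjugate to some $\mathrm{Aut}(\mathbb{C}/\mathbb{Q})$-conjugate of $\gamma$ spans $X_*(T)\otimes_\mathbb{Z}\mathbb{Q}$; this holds because $M$ is the smallest $\mathbb{Q}$-group through which $\gamma$ factors, so $M_\mathbb{C}$ is generated by the images of these conjugates, and $S'$ is stable under the Weyl group. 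One then extracts a basis $\Delta\subset S'$ of $X_*(T)\otimes_\mathbb{Z}\mathbb{Q}$.

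Second, the counting step is not ``the standard bound for the number of weights of a minuscule representation''; it is a pairing argument: each character $\lambda$ of $\phi|_T$ is determined by the integers $\langle\lambda,\delta\rangle$ for $\delta\in\Delta$, and since each $\delta$ is a conjugate of $\gamma$ these integers lie in $\{0,-w\}$, giving at most $2^{r}$ characters where $r=\mathrm{rank}(M)$. The distinctness of the $2n$ characters, which you rightly flag as the delicate point, is where commutativity of $L$ and minusculeness actually enter: minuscule representations have multiplicity-one weight spaces, and the summands $W_\sigma$ are pairwise non-isomorphic (because $L$ is commutative), hence have disjoint characters. Finally, your phrase ``after adjusting by the central $\mathbb{G}_m$'' hides real content: $\gamma$ does not factor through $Hg(V)_\mathbb{C}$, so the paper works in $M$ throughout and then replaces one element of $\Delta$ by the homothety cocharacter $\nu$, against which every character of $\phi|_T$ pairs to $-w$; this sharpens the bound to $2^{r-1}$ and yields $\log_2(2n)\le r-1=\mathrm{Rank}(Hg(V))$. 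Your proposal would need these three repairs, all of which are present in Orr's argument to which you correctly defer.
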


\begin{proof}
As in previous sections, for the Mumford-Tate group $M=MT(V)$,  we have decompositions
$\mathrm{Lie}(M)_\mathbb{C}=\mathfrak{c}\times \mathfrak{g}_1\times \cdots \times \mathfrak{g}_s$ and $W_\sigma=\chi\boxtimes \rho_1\boxtimes\cdots \boxtimes \rho_s$, where $W_\sigma$ is an irreducible  $M_\mathbb{C}$-module coming from the decomposition of $L\otimes_\mathbb{Q}\mathbb{C}$ according to the embeddings $\sigma\in \Sigma(F)$. By Lemma \ref{tablelem}, the nontrivial $\rho_i$ have highest weights which are minuscule and so their weight spaces are all one-dimensional.

Let $T$ be a maximal torus of $M$ and let $r$ be its rank. Consider the restricted representation on $W_\sigma$ given by $(\chi\boxtimes \rho_1\boxtimes\cdots \boxtimes \rho_s)\mid_T$. Since the characters of $T$ in a minuscule representation have multiplicity $1$, we know
\[\dim W_\sigma=(\text{ number of characters of } (\chi\boxtimes \rho_1\boxtimes\cdots \boxtimes \rho_s)\mid_T).\]

Moreover, since $L$ is commutative, none of the representations $W_\sigma$ indexed by the embeddings $\sigma\in \Sigma(F)$ can be isomorphic to each other and so, since non-isomorphic minuscule representations have disjoint characters, these $W_\sigma$ have disjoint characters. Letting $\phi\colon M\rightarrow GL(V)$ be the tautological representation, the sum $V_\mathbb{C}$ of the representations $W_\sigma$ satisfies $\dim V_\mathbb{C}=2n$, so we have
\begin{equation}\label{boundineq1}2n=(\text{ number of characters of }\phi|_T).\end{equation}

Let $S$ be the set of $\mathrm{Aut}(\mathbb{C}/\mathbb{Q)}$-conjugates of the homomorphism $\gamma$ and let $S'$ be the set of cocharacters of the torus $T_\mathbb{C}$ that are $M(\mathbb{C})$-conjugate to an element of $S$. Note that the images of the $M(\mathbb{C})$-conjugates of elements of $S$ generate $M_\mathbb{C}$.  Moreover, because every cocharacter of $M$ is $M(\mathbb{C})$-conjugate to a cocharacter of $T_\mathbb{C}$, the images of the $M(\mathbb{C})$-conjugates of elements of $S'$ still generate $M_\mathbb{C}$. 

Consider the action of the Weyl group of $M$ on 
$X_*(T)\otimes_\mathbb{Z} \mathbb{Q}=\mathrm{Hom}(\mathbb{G}_m, T_\mathbb{C})\otimes_ \mathbb{Z}\mathbb{Q}.$
Since $S'$ is closed under this action  and $M_\mathbb{C}$ is generated by the images of the $M(\mathbb{C})$-conjugates of elements of $S'$, it must be the case that $S'$ spans $X_*(T)\otimes_\mathbb{Z} \mathbb{Q}$ as a $\mathbb{Q}$-vector space. Thus, let $\Delta$ be a basis for $X_*(T)\otimes_\mathbb{Z} \mathbb{Q}$ contained in $S'$.  So, $|\Delta|$ is equal to the rank $r$ of $T$. 

Any weight $\lambda \in \mathrm{Hom}(T_\mathbb{C}, \mathbb{G}_m)$ of the representation $\phi$ is then determined by the integers $\langle \lambda, \delta \rangle$ for $\delta \in \Delta$. But any $\delta$ in $\Delta$ is also in $S'$ and thus is some $\mathrm{Aut}(\mathbb{C}/\mathbb{Q)}$- and $M(\mathbb{C})$-conjugate of $\gamma$. 

Recall that for any $z\in \mathbb{C}^*$, the map $\phi\circ \gamma$ acts as multiplication by $z^{-w}$ on $V^{w,0}$ and as the identity on $V^{0,w}$. Hence, if $\delta \in \Delta$, the integer $\langle \lambda, \delta \rangle$ can only be 0 or $-w$. Namely,
\[(\text{ number of characters of }\phi|_T)\le 2^r.\]
However, this bound may be reduced by noting that $M$ contains the homotheties. Namely, there is a unique cocharacter 
$\nu:\mathbb{G}_m\rightarrow M_\mathbb{C}$ such that for $z\in \mathbb{C}^*$, the map $\phi\circ \nu$ acts by multiplication by $z^{-w}$. So, in fact, $\nu$ may be viewed as an element of $X_*(T)\otimes_\mathbb{Z} \mathbb{Q}$ and, moreover, $\langle \lambda, \nu \rangle$ is equal to $-w$ for any character $\lambda$ of $\phi\mid_T$. We may thus choose a new subset $\Delta'$ of $X_*(T)\otimes_\mathbb{Z} \mathbb{Q}$ and $S'$ such that $\nu \cup \Delta'$ forms a basis of $X_*(T)\otimes_\mathbb{Z} \mathbb{Q}$.  Repeating the above arguments for $\Delta'$ then yields
\begin{equation}\label{boundineq2}(\text{number of characters of }\phi|_T)\le 2^{r-1}.\end{equation}
Combining (\ref{boundineq1}) and (\ref{boundineq2}) yields
\[\mathrm{log}_2(2n)\le r-1,\]
where $r$ is the rank of $M$ as an algebraic group over $\mathbb{Q}$. However we know $M$ is the almost direct product inside $GL(V)$ of $\mathbb{G}_{m}$ and $Hg(V)$, so $Hg(V)$ has rank $r-1$.
\end{proof}

\section{Hodge Representations and Mumford-Tate domains}\label{hgreps}

Following \cite[Section IV.A]{domain}, we introduce the notions of Hodge representations and Mumford-Tate domains, which will prove useful in later sections. 

\begin{defn}Suppose $V$ is a $\mathbb{Q}$-vector space and $\langle,\rangle\colon V\otimes V\rightarrow \mathbb{Q}$ is a bilinear form on $V$ such that $\langle u,v\rangle =(-1)^w\langle v, u\rangle$ for some integer $w\ge 1$. A \emph{Hodge representation} $(H,\rho,\phi)$ is the data of a  representation defined over $\mathbb{Q}$
\[\rho\colon H\rightarrow \mathrm{Aut}(V, \langle, \rangle)\]
of a connected $\mathbb{Q}$-algebraic group $H$ and a non-constant homomorphism
\[\phi\colon U_1\rightarrow H_\mathbb{R}\]
such that the data $(V, \langle,\rangle, h:=\rho\circ \phi)$ is a polarized $\mathbb{Q}$-Hodge structure of weight $w$. 
\end{defn}

Let $(H,\rho,\phi)$ be a Hodge representation attached to the $\mathbb{Q}$-vector space $V$ with bilinear form $\langle, \rangle$ and let $h=\rho \circ \phi$ be its associated polarized $\mathbb{Q}$-Hodge structure of weight $w$. Let $D$ be the period domain of all $\mathbb{Q}$-Hodge structures with the same Hodge numbers as $V$. Consider the Mumford-Tate domain $D_{H_h}$ given by the $H(\mathbb{R})$-orbit of the Hodge structure $h$ inside of the period domain $D$. Let $D_{H_h}^0$ be a connected component of this Mumford-Tate domain $D_{H_h}$.

Note that $D_{H_h}^0$ is a closed analytic space and thus we may speak of a \emph{very general} $\mathbb{Q}$-Hodge structure in $D_{H_h}^0$ to mean a $\mathbb{Q}$-Hodge structure in $D_{H_h}^0$ occurring outside of countably many closed analytic subspaces not equal to $D_{H_h}^0$.


\begin{lem}\label{mtdomainlem}A very general $\mathbb{Q}$-Hodge structure in $D_{H_h}^0$ has Hodge group a connected normal $\mathbb{Q}$-subgroup of $H$. 
\end{lem}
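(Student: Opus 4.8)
The plan is to realize the Hodge group of a very general point as a single well-defined subgroup $H_{\mathrm{gen}}\subseteq H$, and then to force it to be normal by letting the rational points $H(\mathbb{Q})$ act on the domain. First I would record that every $h'\in D_{H_h}^0$ satisfies $Hg(h')\subseteq H$: a point of the $H(\mathbb{R})$-orbit of $h$ is represented by an $H(\mathbb{R})$-conjugate $\phi'=\mathrm{Int}(g)\circ\phi$ of $\phi$, so its defining homomorphism factors through $H_{\mathbb{R}}$, and $Hg(h')$, viewed inside $H$, is the $\mathbb{Q}$-Zariski closure of $\phi'(U_1)$. For each $\mathbb{Q}$-subgroup $H'\subseteq H$ let $D_{H'}\subseteq D_{H_h}^0$ denote the locus of $h'$ with $Hg(h')\subseteq H'$, i.e. those whose defining homomorphism factors through $H'_{\mathbb{R}}$; following \cite[Section IV.A]{domain} this is a closed analytic subspace, and $D_H=D_{H_h}^0$. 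Since a $\mathbb{Q}$-subgroup is cut out by finitely many polynomials with coefficients in $\mathbb{Q}$, there are only countably many such $H'$, so $\Sigma:=\bigcup\{D_{H'}\mid D_{H'}\subsetneq D_{H_h}^0\}$ is a countable union of proper closed analytic subspaces, and the notion of a very general point in $D_{H_h}^0$ is exactly that of a point outside $\Sigma$.

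Next I would isolate $H_{\mathrm{gen}}$. The set of $\mathbb{Q}$-subgroups $H'$ with $D_{H'}=D_{H_h}^0$ is nonempty (it contains $H$) and closed under intersection, because $D_{H_1\cap H_2}=D_{H_1}\cap D_{H_2}$; by the descending chain condition on $\mathbb{Q}$-subgroups it has a smallest element $H_{\mathrm{gen}}$. I claim $Hg(h')=H_{\mathrm{gen}}$ for every $h'$ outside $\Sigma$: such $h'$ lies in $D_{H_{\mathrm{gen}}}=D_{H_h}^0$, so $Hg(h')\subseteq H_{\mathrm{gen}}$, while $Hg(h')$ cannot be a proper subgroup $H''\subsetneq H_{\mathrm{gen}}$, since then either $D_{H''}=D_{H_h}^0$, contradicting minimality of $H_{\mathrm{gen}}$, or $h'\in D_{H''}\subseteq\Sigma$, contradicting generality. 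As a Hodge group, $H_{\mathrm{gen}}$ is a connected $\mathbb{Q}$-algebraic group, so only normality remains.

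For normality I would use the $H(\mathbb{Q})$-action. The stabilizer of the component $D_{H_h}^0$ in $H(\mathbb{R})$ is open, hence contains the identity component $H(\mathbb{R})^{\circ}$; and since connected linear algebraic groups over the perfect field $\mathbb{Q}$ are unirational, $H(\mathbb{Q})$ is dense in $H(\mathbb{R})$ for the real topology, so $H(\mathbb{Q})\cap H(\mathbb{R})^{\circ}$ is Zariski dense in $H$. Fix such a rational $g$ and a very general $h'$ with defining homomorphism $\phi'$. Because $\mathrm{Int}(g)$ is a $\mathbb{Q}$-automorphism of $H$, it commutes with taking $\mathbb{Q}$-Zariski closures, so the Hodge structure $g\cdot h'$, defined by $\mathrm{Int}(g)\circ\phi'$, has $Hg(g\cdot h')=g\,Hg(h')\,g^{-1}=g\,H_{\mathrm{gen}}\,g^{-1}$. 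On the other hand $g$ permutes the countable family $\{D_{H'}\}$ via $g\cdot D_{H'}=D_{gH'g^{-1}}$ and so preserves $\Sigma$; hence $g\cdot h'$ is again very general and $Hg(g\cdot h')=H_{\mathrm{gen}}$. Thus $gH_{\mathrm{gen}}g^{-1}=H_{\mathrm{gen}}$ for a Zariski-dense set of $g$, so $N_H(H_{\mathrm{gen}})=H$ and $H_{\mathrm{gen}}$ is normal in $H$.

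I expect the main obstacle to be this final step: promoting the invariance of $H_{\mathrm{gen}}$ under conjugation by rational points to genuine normality in $H$, which hinges both on the compatibility of the $\mathbb{Q}$-Zariski closure with conjugation by elements of $H(\mathbb{Q})$ and on the Zariski density of the component-preserving rational points. By contrast, the analyticity of the loci $D_{H'}$, their countability, and the existence of a smallest generic subgroup are comparatively routine and largely citable from \cite{domain}.
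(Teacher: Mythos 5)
Your proof is correct and follows essentially the same strategy as the paper's: bound every Hodge group in the orbit by $H$, identify a single well-defined generic Hodge group, and use conjugation by a Zariski-dense set of rational points of $H$ to force normality. You simply supply more detail than the paper does (the loci $D_{H'}$, the minimal subgroup $H_{\mathrm{gen}}$, and the restriction to component-stabilizing rational points), where the paper asserts that the generic Hodge group is determined by the data $(V,\langle,\rangle,\rho,D_{H_h}^0)$, preserved by $H(\mathbb{Q})$, and cites Borel for the Zariski density of $H(\mathbb{Q})$.
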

\begin{proof}
The following proof is heavily inspired by the very similar proofs of Totaro's \cite[Page 4110]{totaro} and Green-Griffiths-Kerr \cite[Proposition VI.A.5]{domain}. From \cite[Proposition IV.A.2]{domain}, every $\mathbb{Q}$-Hodge structure in $D_{H_h}^0$ has Hodge group contained in the group $H$. Moreover, the Hodge group $G$ of a very general $\mathbb{Q}$-Hodge structure in $D_{H_h}^0$ is determined by the data $(V,\langle,\rangle, \rho, D_{H_h}^0)$. Using that $D_{H_h}^0$ is connected, the action of the group $H(\mathbb{Q})$ preserves this data and hence normalizes the algebraic group $G$. Since $H$ is a connected group over the perfect field $\mathbb{Q}$, the group $H(\mathbb{Q})$ is Zariski dense in $H$ \cite[Corollary 18.3]{borel}. Thus, in fact, the group $H$ normalizes the group $G$. Since $G\subset H$, we have that $G$ is a connected normal $\mathbb{Q}$-subgroup of $H$. 
\end{proof}


\begin{cor}\label{mtdomaincor} Let $(H,\rho,\phi)$ be a Hodge representation of a polarized $\mathbb{Q}$-Hodge structure $(V, \langle,\rangle, h:=\rho\circ \phi)$ such that there exists $p\ne q$ such that $V^{p,q}\ne 0$  in the Hodge decomposition of $V_\mathbb{C}$. If $H$ is a $\mathbb{Q}$-simple group, then a very general $\mathbb{Q}$-Hodge structure in $D_{H_h}^0$ has Hodge group equal to $H$.
\end{cor}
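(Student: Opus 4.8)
The plan is to deduce this directly from Lemma \ref{mtdomainlem} together with the $\mathbb{Q}$-simplicity hypothesis and Remark \ref{triv}. By Lemma \ref{mtdomainlem}, the Hodge group $G$ of a very general $\mathbb{Q}$-Hodge structure in $D_{H_h}^0$ is a connected normal $\mathbb{Q}$-subgroup of $H$. Since $H$ is $\mathbb{Q}$-simple, the only connected normal $\mathbb{Q}$-subgroups of $H$ are the trivial group $\{1\}$ and $H$ itself. Thus it suffices to rule out the possibility that $G=\{1\}$, and then we are forced to conclude $G=H$.

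To exclude $G=\{1\}$, I would use that $D_{H_h}^0$ is a connected component of the $H(\mathbb{R})$-orbit of $h$ inside the period domain $D$, and every $\mathbb{Q}$-Hodge structure in $D$ has the same Hodge numbers as $V$ by construction. In particular, the very general $\mathbb{Q}$-Hodge structure whose Hodge group is $G$ has the same Hodge numbers as $V$, so by the hypothesis that $V^{p,q}\neq 0$ for some $p\neq q$, this very general Hodge structure also has a nonzero summand $V^{p,q}$ with $p\neq q$. Remark \ref{triv} states that a polarizable $\mathbb{Q}$-Hodge structure with trivial Hodge group must have $V^{p,q}=0$ for all $p\neq q$; applying its contrapositive shows $G\neq\{1\}$.

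Combining these two steps, the only remaining option from the dichotomy is $G=H$, which is exactly the claim. I do not anticipate any genuine obstacle here, as the statement is essentially a formal consequence of the preceding lemma; the one point requiring slight care is verifying that passing to a very general member of $D_{H_h}^0$ preserves the presence of a nonzero off-diagonal Hodge summand, which follows immediately from the fact that all members of the period domain $D$ share the Hodge numbers of $V$.
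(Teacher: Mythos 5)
Your proposal is correct and follows exactly the paper's own argument: apply Lemma \ref{mtdomainlem} together with $\mathbb{Q}$-simplicity to reduce to the dichotomy $G=\{1\}$ or $G=H$, then use Remark \ref{triv} and the existence of a nonzero $V^{p,q}$ with $p\ne q$ to exclude the trivial case. The only difference is that you spell out why the very general member of $D_{H_h}^0$ also has a nonzero off-diagonal Hodge summand (all members of the period domain share the Hodge numbers of $V$), which the paper leaves implicit.
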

\begin{proof}
By Lemma \ref{mtdomainlem}, since $H$ is $\mathbb{Q}$-simple, the Hodge group $G$ of a very general $\mathbb{Q}$-Hodge structure in $D_{H_h}^0$ is either $1$ or all of $H$. But since there exists $p\ne q$ such that $V^{p,q}$ is nonzero, by Remark \ref{triv}, the group $G$ must be nontrivial. 
\end{proof}

\section{Hodge Groups for Type I Endomorphism Algebras}\label{TypeIsection}
We now begin our characterization of Hodge groups of simple polarizable $\mathbb{Q}$-Hodge structures with Hodge numbers $(n,0,\ldots,0,n)$, making use of the notation for Hodge groups and Lefschetz groups established in Section \ref{lefschetz}.

\begin{prop}\label{rational}
Let $V$ be a simple polarizable $\mathbb{Q}$-Hodge structure of weight $w\ge1$ with Hodge numbers $(n,0,\ldots, 0,n)$ and endomorphism algebra $L$ a totally real number field such that $l=\frac{n}{[L:\mathbb{Q}]}$ is odd. Then, 
\begin{equation*}
Hg(V)=
\begin{cases}
R_{L/\mathbb{Q}}Sp(_LV) &\mbox{if }w \text{ is odd}\\
R_{L/\mathbb{Q}}SO(_LV) \text{ or } R_{L/\mathbb{Q}}SU(2^k) \text{ } \left(\text{for }k\ge3 \text{ and } 2l={{2^k}\choose {2^{k-1}}}\right)&\mbox{if }w \text{ is even.}
\end{cases}
\end{equation*} 
Moreover, if $w$ is even, then $l\ge 3$ and both possible groups occur. \end{prop}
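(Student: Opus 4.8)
The plan is to determine the Lie algebra of $H := Hg(V)$ from the minuscule-weight classification, then identify the $\mathbb{Q}$-group, and finally address realizability. Since $L$ is of Type I we have $F_0 = F = L$ and $q = 1$, so $\dim_L V = 2l$ and, by Remark \ref{ss}, $H$ is semisimple; the decomposition (\ref{DL0}) reads $V_\mathbb{C} = \bigoplus_{\sigma \in \Sigma(F)} W_\sigma$ with each $W_\sigma$ of dimension $2l$ and, by Lemma \ref{irreducible}, irreducible for $H_\mathbb{C}$. First I record that the polarization pairs each $W_\sigma$ only with itself: since the Rosati involution is trivial on the totally real $L$, the identity $\langle fx, y\rangle = \langle x, fy\rangle$ forces $\langle W_\sigma, W_{\sigma'}\rangle = 0$ for $\sigma \neq \sigma'$, so the polarization restricts to a nondegenerate $H_\mathbb{C}$-invariant form on $W_\sigma$, alternating for odd $w$ and symmetric for even $w$. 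Thus $W_\sigma$ is self-dual, symplectic when $w$ is odd and orthogonal when $w$ is even. Because $H$ is semisimple the central character in (\ref{decompW}) is trivial, so $W_\sigma = \rho_1 \boxtimes \cdots \boxtimes \rho_s$; uniqueness of this factorization forces each nontrivial $\rho_i$ to be self-dual, hence even-dimensional by Lemma \ref{tablelem}(\ref{TL2}). As $\dim W_\sigma = 2l \equiv 2 \pmod 4$ (here $l$ is odd), at most one $\rho_i$ can be even-dimensional, so exactly one is nontrivial: $W_\sigma$ is an irreducible minuscule self-dual representation of a single classical simple factor, of the same autoduality type as $W_\sigma$ itself.

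Next I read off the factor from Lemma \ref{tablelem}. For odd $w$, Lemma \ref{tablelem}(\ref{TL3}) identifies it as $\mathfrak{sp}_{2l}$ acting by the standard representation. For even $w$, Lemma \ref{tablelem}(\ref{TL4}) leaves exactly two possibilities: the standard representation of $\mathfrak{so}_{2l}$, or $\wedge^{2^{k-1}}(\mathrm{St})$ of $\mathfrak{sl}_{2^k}$ with $2l = \binom{2^k}{2^{k-1}}$ and $k \geq 3$. In the even case $l = 1$ is impossible, since no semisimple group has a $2$-dimensional orthogonal irreducible representation (the $2$-dimensional irreducible of $\mathfrak{sl}_2$ is symplectic and $\mathfrak{so}_2$ is not semisimple); as $l$ is odd this yields $l \geq 3$. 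In every case the factor avoids type $D_4$ (odd $l$ excludes $D_l = D_4$, and type $A$ is not of type $D$), so Lemma \ref{goursat} applies and gives $\mathrm{Lie}(H)_\mathbb{C} = \prod_{\sigma}(\mathfrak{g}_i)_\sigma$, that is $\prod_\sigma \mathfrak{sp}_{2l}$, $\prod_\sigma \mathfrak{so}_{2l}$, or $\prod_\sigma \mathfrak{sl}_{2^k}$ respectively.

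To identify the $\mathbb{Q}$-group, note that by Table \ref{exphg} the complexified Lie algebra of $Lef(V)$ is $\prod_\sigma \mathfrak{sp}_{2l}$ for odd $w$ and $\prod_\sigma \mathfrak{so}_{2l}$ for even $w$. In the symplectic and the $\mathfrak{so}_{2l}$ cases $\mathrm{Lie}(H)_\mathbb{C}$ equals this, and since $H \subseteq Lef(V)$ (Remark \ref{lefrem}) with both groups connected, we get $Hg(V) = Lef(V)$, namely $R_{L/\mathbb{Q}}Sp(_LV)$ resp. $R_{L/\mathbb{Q}}SO(_LV)$. In the remaining even-weight case $H$ is a proper connected $\mathbb{Q}$-subgroup of $R_{L/\mathbb{Q}}SO(_LV)$ with $\mathrm{Lie}(H)_\mathbb{C} = \prod_\sigma \mathfrak{sl}_{2^k}$; since $L = [\mathrm{End}_\mathbb{Q}(V)]^{H}$ is a field, $\mathrm{Gal}(\overline{\mathbb{Q}}/\mathbb{Q})$ permutes the simple factors transitively and $H = R_{L/\mathbb{Q}}G_0$ for an $L$-form $G_0$ of $SL_{2^k}$, and because the $\mathbb{Q}$-rational symmetric polarization realizes the outer self-duality of $\wedge^{2^{k-1}}(\mathrm{St})$, the form $G_0$ must be the special unitary one, giving $Hg(V) = R_{L/\mathbb{Q}}SU(2^k)$. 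This establishes that $Hg(V)$ is one of the groups listed.

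Finally, realizability of each even-weight candidate. Each is $\mathbb{Q}$-simple (it is $R_{L/\mathbb{Q}}$ of an absolutely simple group over the field $L$, using that $SO_{2l}$ with $l$ odd and $\geq 3$, as well as $SU(2^k)$, are absolutely simple), so by Corollary \ref{mtdomaincor} it suffices to produce a Hodge representation $(H, \rho, \phi)$ of the correct numerical type and then pass to a very general point of $D^0_{H_h}$. For $R_{L/\mathbb{Q}}SO(_LV)$ one uses the standard representation; for $R_{L/\mathbb{Q}}SU(2^k)$ one uses $\wedge^{2^{k-1}}$ of the standard representation together with the cocharacter of $\mathbb{C}^{2^k}$ of weights $(w, 0, \ldots, 0)$, whose exterior power has exactly two distinct weights, each of multiplicity $\binom{2^k - 1}{2^{k-1}} = l$, producing Hodge numbers $(n, 0, \ldots, 0, n)$. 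I expect the main obstacle to be this last step, together with the $\mathbb{Q}$-form identification of the previous paragraph: one must select the signature of the unitary group so that $\phi$ lands in a real form admitting the circle $U_1$, verify that the resulting $h$ is polarized (so that $(H,\rho,\phi)$ is genuinely a Hodge representation with $V^{w,0}\ne 0$), and check that the endomorphism algebra of a very general member is exactly $L$, so that Corollary \ref{mtdomaincor} returns the full group rather than a proper normal subgroup.
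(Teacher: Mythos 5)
Your proposal reproduces the paper's even-weight argument essentially step for step: semisimplicity of $H$ (Remark \ref{ss}), containment in the Lefschetz group (Remark \ref{lefrem}, Table \ref{exphg}), irreducibility and autoduality of the summands $W_\sigma$ (Lemma \ref{irreducible}), the mod-$4$ dimension count forcing exactly one nontrivial self-dual factor, Lemma \ref{tablelem} to pin down $\mathfrak{so}_{2l}$ versus $\mathfrak{sl}_{2^k}$ with $2l=\binom{2^k}{2^{k-1}}$, Lemma \ref{goursat} plus the Galois action to globalize, and realizability via an explicit Hodge representation and Corollary \ref{mtdomaincor}. Where you genuinely diverge is in the odd-weight case and in the claim $l\ge 3$: the paper disposes of odd weight by the equivalence with abelian varieties (Remark \ref{equiv}) together with Ribet's theorem, and excludes $l=1$ in even weight as exceptional case (6) of Theorem \ref{totthm}, whereas you run the same minuscule-weight machinery uniformly in both parities (Lemma \ref{tablelem}(\ref{TL3}) giving the standard representation of $\mathfrak{sp}_{2l}$) and rule out $l=1$ by observing that no semisimple group has a $2$-dimensional orthogonal irreducible representation. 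Both substitutions are correct and make the proof self-contained at no real extra cost.

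Two caveats. First, your justification that the $L$-form $G_0$ must be special unitary is wrong as stated: the self-duality of $\wedge^{2^{k-1}}(\mathrm{St})$ is \emph{inner}, realized by the $SL_{2^k}$-invariant wedge pairing $\wedge^{2^{k-1}}\otimes\wedge^{2^{k-1}}\to\wedge^{2^k}\cong\mathrm{triv}$ (symmetric since $2^{k-1}$ is even), so the split form of $SL_{2^k}$ over $L$ also preserves an $L$-rational symmetric form, and the existence of the polarization alone does not single out the unitary form. What does is the constraint on real forms of Hodge groups: the isotropy group of the Hodge structure is compact, so $Hg(V)(\mathbb{R})$ contains a compact maximal torus, which $SL_{2^k}(\mathbb{R})$ and $SU^*(2^k)$ lack, leaving only the forms that are unitary at every real place. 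To be fair, the paper asserts this identification with no argument at all, so your conclusion agrees with the paper even though your stated reason does not hold up. Second, your realizability cocharacter of weights $(w,0,\ldots,0)$ has nontrivial determinant, so it does not land in $SU(2^k)$; the paper's traceless version $\mathrm{diag}\bigl(z^{-w/2^{k-1}}\mathrm{Id}_{2^k-1},\,z^{(2^k-1)w/2^{k-1}}\bigr)$ is exactly the needed fix, after which your multiplicity computation $\binom{2^k-1}{2^{k-1}}=l$ on each weight goes through verbatim. The verifications you defer at the end (polarizability of the resulting $h$, and that a very general member of the domain is simple with endomorphism algebra exactly $L$) are likewise asserted rather than proved in the paper, so they are not gaps relative to it; note also that Corollary \ref{mtdomaincor} needs only $\mathbb{Q}$-simplicity of $H$ and $V^{w,0}\ne 0$, not control of the endomorphism algebra, so your last worry concerns the hypotheses of the Proposition being met by the constructed example, not the corollary itself.
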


\begin{proof}
When $V$ is of odd weight, the result follows using the equivalence of Remark \ref{equiv} together with a result of Ribet's \cite[Theorem 1]{ribet1}, which proves the result for simple complex abelian varieties. Thus we may assume that the weight $w$ of $V$ is even.

The case when $w$ is even and $l=1$ is exceptional case (6) in Totaro's classification of the of the possible endomorphism algebras of $\mathbb{Q}$-Hodge structures of the specified type (see Section \ref{endclass}), and thus, since $l$ is assumed to be odd, we know $l\ge 3$.  

Let $H=Hg(V)$ be the Hodge group of $V$, which by Remark \ref{ss} is semisimple. Remark \ref{lefrem} and Table \ref{exphg} about the Lefschetz group of $V$ imply
\[H\subseteq R_{L/\mathbb{Q}}SO(_LV).\]

Now, in the notation of Section \ref{section5}, consider an irreducible representation $W_\sigma=\rho_1\boxtimes\cdots \boxtimes \rho_s$ of $\mathrm{Lie}(H)_\mathbb{C}$ induced by the decomposition $L\otimes_\mathbb{Q}\mathbb{C}=\prod_{\sigma\in \Sigma(L)}\mathbb{C}$.

Note that since the representation $W_\sigma$ is orthogonal, each of the nontrivial $\rho_i$ is a self-dual representation, meaning either symplectic or orthogonal, and the number of $i$ such that $\rho_i$ is symplectic must be even. Since $W_\sigma$ has dimension $2l$ with $l$ odd, using Part (\ref{TL2}) of Lemma \ref{tablelem}, none of the dimensions of the representations $\rho_i$ can be divisible by $4$ and there can be only one $i$ such that $\rho_i$ is nontrivial. Hence this nontrivial $\rho_i$ must be orthogonal. Applying Part (\ref{TL4}) of Lemma \ref{tablelem} yields that this nontrivial $\rho_i$ is either of type $D_l$, acting on $W_\sigma$ by the standard representation of $\mathfrak{so}_{2l}$ or, in the case that $2l={{2^k}\choose {2^{k-1}}}$ for some $k\ge3$, of type $A_{2^k-1}$, acting on $W_\sigma$ by the $(2^k-1)$-th exterior product of the standard representation of $\mathfrak{sl}_{2k}$.

Applying Lemma \ref{goursat} then yields
\[\mathrm{Lie}(H)_\mathbb{C}=\prod_{\sigma\in \Sigma(L)}\mathfrak{g}_\sigma,\]
where $\mathfrak{g}_\sigma$ is either equal to $\mathfrak{so}_{2l}$ or to $\mathfrak{sl}_{2^k}$ for $2l={{2^k}\choose {2^{k-1}}}$ for some $k\ge3$. But since the group $H$ must be defined over $\mathbb{Q}$, we must have either $\mathfrak{g}_\sigma=\mathfrak{so}_{2l}$ for all $\sigma\in \Sigma(L)$ or $\mathfrak{g}_\sigma=\mathfrak{sl}_{2^k}$ for all $\sigma\in \Sigma(L)$.

Hence either $H=R_{L/\mathbb{Q}}SO(_LV)$, which is the generic case \cite[Corollary II.A.6]{domain}, or $H=R_{L/\mathbb{Q}}SU(2^k)$ acting via the representation $\rho\colon R_{L/\mathbb{Q}}SU(2^k)\rightarrow SO(V)$ given by the product over all embeddings $\sigma\in \Sigma(L)$ of the $2^k-1$-th exterior product of the standard representation of $SU(2^k)$. We must show that this second case really can occur. 

Let $r=[L:\mathbb{Q}]$ and consider the homomorphism
\[\phi\colon \mathbb{U}_1\rightarrow SU(2^k,\mathbb{R})^r\]
given by
\[z\in \mathbb{C}^*\mapsto \left(
\begin{array}{c|c}
z^{-\frac{w}{2^{k-1}}}\cdot \mathrm{Id}_{2^k-1} & 0\\
\hline
0 & z^{\frac{(2^k-1)w}{2^{k-1}}}
\end{array}
\right)^r.\]
Let $h$ be the composition $h=\rho\circ \phi$. Then observe that $h$ defines a weight $w$ polarized Hodge structure with Hodge numbers $(n,0,\ldots,0,n)$ on $V$. Namely, the data $(R_{L/\mathbb{Q}}SU(2^k),\rho,\phi)$ defines a Hodge representation with Hodge numbers $(n,0,\ldots,0,n)$. 

Since $R_{L/\mathbb{Q}}SU(2^k)$ is $\mathbb{Q}$-simple, by Corollary \ref{mtdomaincor} a very general $\mathbb{Q}$-Hodge structure in a connected component of the Mumford-Tate domain $D_{R_{L/\mathbb{Q}}SU(2^k)_h}$ will have Hodge group equal to $R_{L/\mathbb{Q}}SU(2^k)$.

\end{proof}


\begin{prop}\label{2TypeI}
Let $V$ be a simple polarizable $\mathbb{Q}$-Hodge structure of weight $w\ge1$ with Hodge numbers $(n,0,\ldots, 0,n)$ and endomorphism algebra $L$ a totally real number field such that $\frac{n}{[L:\mathbb{Q}]}=2$. Then

\begin{equation*}
Hg(V)=
\begin{cases}
R_{L/\mathbb{Q}}Sp(_LV) &\mbox{if }w \text{ is odd}\\
R_{L/\mathbb{Q}}SO(_LV)&\mbox{if }w \text{ is even.}
\end{cases}
\end{equation*}
\end{prop}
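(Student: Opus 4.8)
The proof divides according to the parity of $w$, and I would dispose of the odd-weight case by reduction to abelian varieties. Under the equivalence of Remark \ref{equiv}, which preserves polarizability in odd weight, such a $V$ corresponds to a simple complex abelian variety with real multiplication by $L$ and relative dimension $n/[L:\mathbb{Q}]=2$; for these Ribet's theorem \cite[Theorem 1]{ribet1} gives that the Hodge group is the full symplectic Lefschetz group, which under the equivalence is $R_{L/\mathbb{Q}}Sp(_LV)$. This is the same mechanism used in the odd-weight half of Proposition \ref{rational}, and the relative dimension $2$ lies below the range where Mumford-type exceptions occur. So I concentrate on even weight.

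For even weight I begin, as usual, with $Hg(V)\subseteq Lef(V)=R_{L/\mathbb{Q}}SO(_LV)$ from Remark \ref{lefrem} and Table \ref{exphg}, and recall that $Hg(V)$ is semisimple by Remark \ref{ss}. Each summand $W_\sigma$ of \eqref{DL0} is an irreducible orthogonal $H_\mathbb{C}$-representation of dimension $2l=4$. The exterior-power option \ref{binom} of Lemma \ref{tablelem} is unavailable because $4=\binom{2^k}{2^{k-1}}$ has no solution with $k\ge 3$, and no simple classical group carries a $4$-dimensional orthogonal minuscule representation (the only numerical candidate, $D_2$, is not simple, while $B_2$ and $C_2$ give symplectic representations and $\overline{\omega}_1$ for $A_3$ is not self-dual). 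Hence $W_\sigma$ is necessarily an external tensor product of two standard $\mathfrak{sl}_2$-representations, so the simple factors of $H_\mathbb{C}$ acting on $W_\sigma$ are two copies of $\mathfrak{sl}_2$ with $\mathfrak{g}_\sigma=\mathfrak{sl}_2\times\mathfrak{sl}_2\cong\mathfrak{so}_4$, and by irreducibility (Lemma \ref{irreducible}) the projection of $\mathrm{Lie}(Hg)_\mathbb{C}$ to $\mathfrak{gl}(W_\sigma)$ is all of $\mathfrak{so}_4$. Thus $\mathrm{Lie}(Hg)_\mathbb{C}$ is a subalgebra of $(\mathfrak{sl}_2)^{2g}=\mathrm{Lie}(Lef)_\mathbb{C}$ surjecting onto every simple factor, and the proposition reduces to the assertion that no Goursat-type identification of these $2g$ factors occurs.

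Ruling out such an identification is the main obstacle, and here I would use Totaro's classification decisively rather than Lemma \ref{goursat} (which does not apply, as each $W_\sigma$ has two nontrivial tensor factors). Because $V$ exists as a polarizable Hodge structure of Type I with $m=2l=4$, we are not in exceptional case (7) of Theorem \ref{totthm}, so $(V,\langle,\rangle)$ has discriminant $d\neq 1$ in $F^{*}/(F^{*})^{2}$. The nonvanishing of the discriminant means exactly that $SO(_LV)$ is an outer form whose two $\mathfrak{sl}_2$-factors over $\overline{L}$ are interchanged by $L(\sqrt{d})/L$; concretely, the element $\delta\in\mathrm{Aut}(\mathbb{C}/\mathbb{Q})$ realizing $\sqrt{d}\mapsto-\sqrt{d}$ swaps the two simple factors inside each $W_\sigma$. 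Now suppose a Goursat block identified a simple factor of $W_\sigma$ with one of $W_\tau$. By irreducibility the two factors inside a single $W_\sigma$ lie in distinct blocks, so $\sigma\neq\tau$; and since $\mathrm{Lie}(Hg)_\mathbb{C}$ is defined over $\mathbb{Q}$, the block partition is stable under $\mathrm{Aut}(\mathbb{C}/\mathbb{Q})$, so applying $\delta$ shows that the two $\delta$-partner factors are also identified. Then $W_\sigma$ and $W_\tau$ would be built from the same pair of simple factors by the same representation, hence isomorphic, contradicting the fact (as in the proof of Lemma \ref{bound}, using that $L$ is commutative) that the $W_\sigma$ are pairwise non-isomorphic. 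Therefore all $2g$ factors are distinct, $\mathrm{Lie}(Hg)_\mathbb{C}=(\mathfrak{sl}_2)^{2g}=\mathrm{Lie}(Lef)_\mathbb{C}$, and since both groups are connected with the same Lie algebra inside $SO(V)$ we conclude $Hg(V)=R_{L/\mathbb{Q}}SO(_LV)$. I expect the verification that the discriminant of $(V,\langle,\rangle)$ is precisely the invariant governing the outer structure of $SO(_LV)$, and hence that existence forces $d\neq1$, to be the only delicate input beyond the representation-theoretic bookkeeping.
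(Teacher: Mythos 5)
Your even-weight skeleton agrees with the paper's up to the point where each $W_\sigma$ is forced to be $\mathrm{std}\boxtimes\mathrm{std}$ of $\mathfrak{sl}_2\times\mathfrak{sl}_2$, and your appeal to exceptional case (7) of Theorem \ref{totthm} to force the discriminant of $_LV$ to be nontrivial in $L^*/(L^*)^2$ is a legitimate shortcut past the paper's separate treatment of the non-simple Lefschetz group (which the paper instead handles head-on via the Mumford--Tate domain $(\mathbb{CP}^1\sqcup\mathbb{CP}^1)^g$ and Lemma \ref{sl2lem}). The gap is exactly at the step you flag as delicate. You need $\delta\in\mathrm{Aut}(\mathbb{C}/\mathbb{Q})$ fixing every embedding $\sigma\in\Sigma(L)$ while swapping the two geometric $\mathfrak{sl}_2$-factors over $\sigma$ and over $\tau$. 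Such a $\delta$ must fix the Galois closure $\tilde L$ yet move $\sqrt{\sigma(d)}$; but $d$ nontrivial in $L^*/(L^*)^2$ only guarantees $\sqrt{d}\notin L$, not $\sqrt{\sigma(d)}\notin\tilde L$, and when $\tilde L\supsetneq L$ the element $\sigma(d)$ can become a square in $\tilde L$, in which case $\mathrm{Aut}(\mathbb{C}/\tilde L)$ fixes the pair over $\sigma$ and your $\delta$ does not exist. One is then forced to use automorphisms that permute the embeddings, and the conclusion ``the $\delta$-partner factors are also identified, hence $W_\sigma\cong W_\tau$'' no longer drops out. Note also that a single identified pair does not by itself contradict commutativity of $L$: block patterns in which each block contains one factor from each of two \emph{different} $W_\sigma$ without ever pairing up both factors of any two of them (for $[L:\mathbb{Q}]=3$, the pattern $W_{\sigma_1}=A\otimes B$, $W_{\sigma_2}=A\otimes C$, $W_{\sigma_3}=B\otimes C$) yield pairwise non-isomorphic $W_\sigma$ and so survive your final contradiction. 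The paper excludes such configurations with the invertible-matrix/module-isomorphism argument when $Lef(V)$ is $\mathbb{Q}$-simple and with the weight constraint of Lemma \ref{sl2lem} otherwise; your write-up has no substitute once $\delta$ is unavailable, so this step needs either a correct analysis of the Galois action on the $2g$ factors or an appeal to the Hodge cocharacter as in Lemma \ref{sl2lem}.

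Separately, your disposal of the odd-weight case is a mis-citation. Ribet's Theorem 1 \cite{ribet1}, as it is invoked in Proposition \ref{rational}, requires the relative dimension $n/[L:\mathbb{Q}]$ to be \emph{odd}; here it equals $2$, and the assertion that $2$ ``lies below the range where Mumford-type exceptions occur'' is not an argument. The fix is cheap and is what the paper does: in odd weight each $W_\sigma$ is a $4$-dimensional irreducible \emph{symplectic} minuscule representation, which by Lemma \ref{tablelem} must be of type $C_2$ acting by the standard representation of $\mathfrak{sp}_4$ (the $A_1\times A_1$ option is orthogonal); there is then a single nontrivial simple factor, so Lemma \ref{goursat} applies directly and gives $Hg(V)=R_{L/\mathbb{Q}}Sp(_LV)$ with none of the Goursat complications of the even-weight case.
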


\begin{proof}

Let $H=Hg(V)$ be the Hodge group of $V$, which is semisimple by Remark \ref{ss}. So then, using Remark \ref{lefrem} and Table \ref{exphg}, when $w$ is even (respectively when $w$ is odd), we have
\[\begin{array}{cc}
H\subseteq R_{L/\mathbb{Q}}SO(_LV) & \text{ (respectively  }H\subseteq R_{L/\mathbb{Q}}Sp(_LV) \text{)}.
\end{array}\]

As in the proof Proposition \ref{rational} and using the notation of Section \ref{section5}, let $W_\sigma=\rho_1\boxtimes\cdots \boxtimes \rho_s$ be an irreducible representation of $\mathrm{Lie}(H)_\mathbb{C}$ induced by the decomposition $L\otimes_\mathbb{Q}\mathbb{C}=\prod_{\sigma\in \Sigma(L)}\mathbb{C}$. These $W_\sigma$ are $4$-dimensional and are orthogonal (respectively symplectic), which implies that each nontrivial $\rho_i$ is self-dual and the number of $i$ such that $\rho_i$ is symplectic must be even (respectively odd). Hence Lemma \ref{tablelem} yields that the representation $W_\sigma$ is of type $A_1\times A_1$ acting  by the product of the standard representations of $\mathfrak{sl}_2$ (respectively of type $C_2$ acting by the standard representation of $\mathfrak{sp}_{4}$).

In the latter case, namely when $w$ is odd and hence the representation $W_\sigma$ is of type $C_2$, Lemma \ref{goursat} yields $\mathrm{Lie}(H)_\mathbb{C}=\prod_{\sigma\in \Sigma(L)}\mathfrak{sp}_4$ and so $H=R_{L/\mathbb{Q}}Sp(_LV).$

Now consider the case when $w$ is even and hence the representation $W_\sigma$ is of type $A_1\times A_1$.  If the Hodge structure $V$ is such that the Lefschetz group $R_{L/\mathbb{Q}}SO(_LV)$ is simple, then either $H$ is all of $R_{L/\mathbb{Q}}SO(_LV)$ or, as in the proof of Lemma \ref{goursat}, there is a $\mathrm{Lie}(H)_\mathbb{C}$-module isomorphism $\alpha$ between factors $\mathfrak{so}_4$ and $\mathfrak{so}_4$. Such an isomorphism $\alpha$ may be viewed as a matrix 
\begin{equation}\label{goursatmatrix}
\left(
\begin{array}{cc}
\psi_{11} & \psi_{12}\\

\psi_{21} & \psi_{22}
\end{array}
\right),
\end{equation} 
where each $\psi_{ij}\in \mathrm{Aut}(\mathfrak{sl}_2)$.  Since automorphisms of $\mathfrak{sl}_2$ are all inner, as in the proof of Lemma \ref{goursat}, the nonzero $\psi_{ij}$ induce isomorphisms of the standard representations of their corresponding $\mathfrak{sl}_2$ factors. Namely, if $\alpha$ is an isomorphism between the copy of $\mathfrak{so}_4$ acting on $W_1=U_{11}\otimes U_{12}$ and the copy of $\mathfrak{so}_4$ acting on $W_2=U_{21}\otimes U_{22}$, where each $U_{ij}$ is the standard representation of $\mathfrak{sl}_2$, then each nonzero $\psi_{ij}$ induces a $\mathrm{Lie}(H)_\mathbb{C}$-module isomorphism between $U_{1j}$ and $U_{2i}$. Since the matrix in (\ref{goursatmatrix}) is invertible, we get $U_{11}\cong U_{21}, U_{12}\cong U_{22}$ or $U_{11}\cong U_{22}, U_{12}\cong U_{21}$, which in either case yields $W_1\cong W_2$, contradicting the assumption that the endomorphism algebra $L$ is a field. Hence, if $R_{L/\mathbb{Q}}SO(_LV)$ is simple, then the Hodge group $H$ is all of $R_{L/\mathbb{Q}}SO(_LV)$.

Now suppose that the Lefschetz group $R_{L/\mathbb{Q}}SO(_LV)$ is not simple. This occurs when $_LV$ has discriminant $1$ in $L^*/(L^*)^2$ and thus $SO(_LV)$ is the product of two subgroups $SL(1,L')$ and $SL(1,L'^{\mathrm{op}})$, where $L'$ is a quaternion algebra over $L$ \cite[Corollary 15.12]{boi}. So then we know that the Hodge group $H$ as a $\mathbb{Q}$-group satisfies
\begin{equation} \label{nonsimpledecomp}H\subset SL(1,L')\cdot SL(1,L'^{\mathrm{op}})\end{equation}
and that $H$ surjects onto each of these two factors. Hence $H$ is either the entire product or $H$ is the graph of an isomorphism between the two simple factors.  

The Mumford-Tate domain $D$ of Hodge structures with Hodge numbers $(n,0\ldots,0,n)$ and endomorphism algebra contained in $L$ is isomorphic to $(\mathbb{C}\mathbb{P}^1\sqcup \mathbb{C}\mathbb{P}^1)^g$, where $g=[L:\mathbb{Q}]$. Consider the homomorphism $\gamma\colon \mathbb{G}_{m,\mathbb{C}}\rightarrow MT(V)_\mathbb{C}$ defining the Mumford-Tate group of $V$. Then by Lemma \ref{sl2lem}, the weights in each irreducible representation $W_{\sigma}=U_{\sigma,1}\otimes U_{\sigma,2}$ of $\gamma$ must either be trivial on $U_{\sigma,1}$ or on $U_{\sigma,2}$.  

Observe that if the Hodge group $H$ is the graph of an isomorphism between the simple factors $SL(1,L')$ and $SL(1,L'^{\mathrm{op}})$ in (\ref{nonsimpledecomp}), then either the weights of $\gamma$ on $U_{\sigma,1}$ are trivial for all $\sigma \in \Sigma(L)$ or the weights of $\gamma$ on $U_{\sigma,2}$ are trivial for all $\sigma \in \Sigma(L)$.  Namely the Hodge structure $V$ lies exactly on the two connected components of the Mumford-Tate domain $D$ whose generic elements are non-simple, as proved by Totaro in \cite[Theorem 4.1]{totaro}. Namely, if $H$ is the graph of an isomorphism between the two factors in (\ref{nonsimpledecomp}), then the Hodge structure $V$ is not simple, which is a contradiction. Hence the Hodge group $H$ is all of $SL(1,L')\cdot SL(1,L'^{\mathrm{op}})\cong R_{L/\mathbb{Q}}SO(_LV)$.
\end{proof}


\begin{prop}\label{twiceodd}
Let $V$ be a simple polarizable $\mathbb{Q}$-Hodge structure of weight $w\ge1$ with Hodge numbers $(n,0,\ldots, 0,n)$, where $n$ is twice an odd number, such that the endomorphism algebra $L$ of $V$ is equal to $\mathbb{Q}$. Then
\begin{equation*}
Hg(V)=
\begin{cases}
Sp(V) &\mbox{if }w \text{ is odd}\\
SO(V) \text{ or } R_{L/\mathbb{Q}}SU(2^k) \text{ } \left(\text{for }k\ge3 \text{ and } 2n={{2^k}\choose {2^{k-1}}}\right)&\mbox{if }w \text{ is even.}
\end{cases}
\end{equation*} 
In the case when $w$ is even, both possible groups occur.
\end{prop}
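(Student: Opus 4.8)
The plan is to follow the strategy of the proof of Proposition~\ref{rational}, treating $w$ odd and $w$ even separately. Because $L=\mathbb{Q}$ we have $|\Sigma(F)|=1$ and $q=1$, so $V_\mathbb{C}=W_\sigma$ is itself the single irreducible $H_\mathbb{C}$-representation of Section~\ref{section5}, of dimension $2n$. The genuinely new point compared with Proposition~\ref{rational} is that $2n\equiv 4\pmod 8$ rather than $\equiv 2\pmod 4$, so $W_\sigma$ could a priori break into \emph{two} nontrivial tensor factors. For $w$ odd I would pass through Remark~\ref{equiv} to a simple abelian variety of dimension $n$ with endomorphism algebra $\mathbb{Q}$; since $n$ is even, Ribet's theorem does not apply verbatim, so I would instead run the symplectic version of the argument below (now $W_\sigma$ is symplectic and the target simple factor is $\mathfrak{sp}_{2n}$, of type $C_n$) to obtain $Hg(V)=Sp(V)$. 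Assume henceforth that $w$ is even; then $H=Hg(V)$ is semisimple (Remark~\ref{ss}) and $H\subseteq SO(V)$ (Remark~\ref{lefrem} and Table~\ref{exphg}).

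Write $V_\mathbb{C}=W_\sigma=\chi\boxtimes\rho_1\boxtimes\cdots\boxtimes\rho_s$. Each nontrivial $\rho_i$ is self-dual—as $W_\sigma$ is orthogonal and the $\rho_i$ are irreducibles of distinct simple factors—hence even-dimensional by Part~(\ref{TL2}) of Lemma~\ref{tablelem}. Since $\dim_\mathbb{C} W_\sigma=2n$ has $2$-adic valuation exactly $2$, there are at most two nontrivial factors, and if there are exactly two then each has dimension $\equiv 2\pmod 4$.

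Eliminating the two-factor case is the heart of the matter, and the step I expect to be hardest, since (unlike in Proposition~\ref{rational}) the dimension alone no longer forbids two factors. Two factors would have dimensions $2a$ and $2b$ with $ab=n/2$ odd; when $n=2p$ the only factorization is $\{a,b\}=\{1,p\}$, forcing one factor to be the $2$-dimensional standard representation of an $\mathfrak{sl}_2$. Then $H_\mathbb{C}\cong SL(2,\mathbb{C})\times G$ acts on $V_\mathbb{C}$ by the standard representation of $SL(2,\mathbb{C})$ tensored with a representation of $G$, where $G$ (of type $C_p$) has no $\mathfrak{sl}_2$ factor, and Corollary~\ref{sl2cor} rules this out. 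For a general odd $n/2$ one must additionally dispose of decompositions $Sp_{2a}\times Sp_{2b}$ with $a,b\ge 2$: here I would use that $\gamma$ must act nontrivially on each $\mathbb{Q}$-rational simple factor (else that factor is not generated by the $\mathrm{Aut}(\mathbb{C}/\mathbb{Q})$-conjugates of $\gamma$ and so cannot lie in $MT(V)$), whereas acting nontrivially on two tensor factors at once would produce a weight $2w$ in $V_\mathbb{C}$, contradicting that only the weights $0$ and $w$ occur; the remaining possibility that the two factors are Galois-conjugate (so $H=R_{K/\mathbb{Q}}Sp_{2a}$) is killed by polarizability, the alternating form on the ``constant'' factor being incompatible with the positivity of a polarization.

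There remains a single orthogonal minuscule factor of dimension $2n=4\cdot(\text{odd})$. Consulting Table~\ref{selfdual}: the spin representations of types $B_l$ and $D_l$ have dimension a power of $2$, which cannot be $4\cdot(\text{odd})$ with odd part $>1$, and the one remaining small rank is excluded by the self-duality constraints; the representation $\wedge^j(\mathrm{Standard})$ of $A_{2j-1}$ (orthogonal only for $j$ even) has dimension $\binom{2j}{j}$, whose $2$-adic valuation equals the number of $1$'s in the binary expansion of $j$, so $\binom{2j}{j}=4\cdot(\text{odd})$ forces $j$ to have exactly two such digits; for $n=2p$ this makes $p=\tfrac14\binom{2j}{j}$ composite for every admissible $j$, and in any case a polarizable Hodge structure with a type-$A$ Hodge group acting minuscule-ly has endomorphism algebra of Type IV, contrary to $L=\mathbb{Q}$. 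Hence the factor is the standard representation of $\mathfrak{so}_{2n}$ (type $D_n$), giving $H=SO(V)$. Finally, the alternative $R_{L/\mathbb{Q}}SU(2^k)$ in the statement corresponds to $A_{2^k-1}$ acting by $\wedge^{2^{k-1}}$, of dimension $\binom{2^k}{2^{k-1}}\equiv 2\pmod 4$, which can never equal $2n\equiv 0\pmod 4$; this branch is therefore vacuous, so for even $w$ one has $Hg(V)=SO(V)$, and that this group is realized is the generic statement \cite[Corollary II.A.6]{domain} (equivalently, Corollary~\ref{mtdomaincor} applied to the $\mathbb{Q}$-simple group $R_{L/\mathbb{Q}}SO({}_LV)$).
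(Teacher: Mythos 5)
Your strategy is the same as the paper's: since $L=\mathbb{Q}$, the space $V_\mathbb{C}$ is a single irreducible minuscule representation of $H_\mathbb{C}$, Lemma \ref{tablelem} and Table \ref{selfdual} constrain its tensor factors, Corollary \ref{sl2cor} removes the decompositions with an $\mathfrak{sl}_2$ factor, and \cite[Corollary II.A.6]{domain} realizes $SO(V)$ (resp.\ $Sp(V)$). Two of your observations actually sharpen the paper's write-up: the $R_{L/\mathbb{Q}}SU(2^k)$ branch is indeed vacuous here, because $\binom{2^k}{2^{k-1}}\equiv 2\pmod 4$ while $2n\equiv 4\pmod 8$ (the paper carries this branch along and only discards it later, inside the proof of Theorem \ref{2phodge}); and you correctly identify that for composite $n/2$ the two-tensor-factor case is not exhausted by $SL(2)\times(\text{something})$, which is all the paper lists.

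The gap sits exactly at the step you single out as hardest. First, you treat only $Sp_{2a}\times Sp_{2b}$, but an orthogonal irreducible of dimension $4m$ with $m$ odd can just as well be a product of two \emph{orthogonal} factors of dimension $\equiv 2\pmod 4$ (types $D_a\times D_b$, or factors of type $A_{2^k-1}$ acting by $\wedge^{2^{k-1}}(\mathrm{St})$), and these require the same discussion. Second, and more seriously, the claim that the Galois-conjugate case $H=R_{K/\mathbb{Q}}Sp_{2a}$ is ``killed by polarizability'' fails in even weight: for Hodge numbers $(n,0,\ldots,0,n)$ and $w$ even, conditions (1) and (2) in the definition of a polarization force $\langle\,,\rangle$ to be a \emph{definite} symmetric form on $V_\mathbb{R}$, so $Hg(V)(\mathbb{R})$ lies in a compact $SO(2n)$, both conjugate factors are compact forms $Sp(a)$ whose standard representations are quaternionic with definite invariant Hermitian forms, and the tensor product of two definite Hermitian forms is again definite --- positivity is satisfied rather than violated. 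A Mumford-type corestriction construction (a $K$-form of $Sp(2a)$ over a real quadratic field, compact at both infinite places, with trivial corestricted Brauer class, and $\phi$ landing in a single factor) then appears to produce a Hodge representation with Hodge numbers $(2a^2,0,\ldots,0,2a^2)$ and generic endomorphism algebra $\mathbb{Q}$, exactly as Mumford's $SL(2)^3$ example does in weight one; so for composite $n/2$ (the smallest instance being $n=18$) this case cannot be dismissed without a genuinely new argument. For the values the paper actually uses downstream ($n=2p$ with $p$ prime, where the only factorization forces a $2$-dimensional factor and Corollary \ref{sl2cor} applies), your proof, like the paper's, is complete. Finally, the aside that a type-$A$ Hodge group acting by an orthogonal minuscule representation forces a Type IV endomorphism algebra is incorrect --- Proposition \ref{rational} itself exhibits $R_{L/\mathbb{Q}}SU(2^k)$ with $L$ totally real --- but you do not need it, since the $2$-adic count already settles that point.
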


\begin{proof}
Let $H=Hg(V)$ be the Hodge group of $V$. Then, using Remark \ref{lefrem} and Table \ref{exphg}, when $w$ is even (respectively when $w$ is odd), we have
\[\begin{array}{cc}
H\subseteq SO(V) & \text{ (respectively  }H\subseteq Sp(V) \text{)}.
\end{array}\]
Since the endomorphism algebra $L$ is equal to $\mathbb{Q}$, the representation $V$ of $H$ is irreducible. Applying Lemma \ref{tablelem}, the possibilities for $H$ acting on $V$ are:
\begin{enumerate}
\item $SO(2n)$ acting by the standard representation
\item $SO(2^k)$, with $2n={{2^k}\choose {2^{k-1}}}$ for $k\ge3$, acting by $\bigwedge^{2^{k-1}}(\mathrm{Standard})$
\item $SU(2)\times SO(n)$ acting by the tensor product of the two standard representations
\end{enumerate}
(respectively,
\begin{enumerate}
\item $Sp(2n)$ acting by the standard representation
\item $SL(2)\times SO(n)$ acting by the tensor product of the two standard representations
\item $SL(2)\times SL(2^k)$, with $n={{2^k}\choose {2^{k-1}}}$ for $k\ge3$, acting by the standard representation of $SL(2)$ tensor $\bigwedge^{2^{k-1}}(\mathrm{Standard}).$
\end{enumerate}

However, using Corollary \ref{sl2cor} we may eliminate $SL(2)\times SO(n)$ (respectively $SL(2)\times SO(n)$ and $SL(2)\times SL(2^k)$) as possibilities. Hence $H_\mathbb{C}$ must be $SO(V)\cong SO(2n)$ or $SO(2^k)$ acting by $\bigwedge^{2^{k-1}}(\mathrm{Standard})$
  (respectively $Sp(V)\cong Sp(2n)$). 

As in the proof of Proposition \ref{rational}, the case $H=SO(V)$ is the generic case and thus is always possible \cite[Corollary II.A.6]{domain}. Thus, it remains to show that when $w$ is even and $2n={{2^k}\choose {2^{k-1}}}$ for $k\ge3$, the Hodge group $SU(2^k)$ acting by the representation 
\[\rho\colon R_{L/\mathbb{Q}}SU(2^k,L)\rightarrow SO(V),\]
given by the $2^k-1$-th exterior product of the standard representation of $SU(2^k,L)$, is also possible. However, as in the proof of Proposition \ref{rational}, the homomorphism
\[\phi\colon \mathbb{U}_1\rightarrow SU(2^k,\mathbb{R})\]
given by
\[z\in \mathbb{C}^*\mapsto \left(
\begin{array}{c|c}
z^{-\frac{w}{2^{k-1}}}\cdot \mathrm{Id}_{2^k-1} & 0\\
\hline
0 & z^{\frac{(2^k-1)w}{2^{k-1}}}
\end{array}
\right)\]
defines a Hodge representation $(SU(2^k),\rho,\phi)$ with Hodge numbers $(n,0,\ldots,0,n)$. Since the $\mathbb{Q}$-group $SU(2^k)$ is simple, Corollary \ref{mtdomaincor}  finishes the proof. 
\end{proof}


\begin{prop}\label{four1} Let $V$ be a simple polarizable $\mathbb{Q}$-Hodge structure of weight $w\ge1$ with Hodge numbers $(4,0,\ldots,0,4)$ and endomorphism algebra $L$ equal to $\mathbb{Q}$. Then
\begin{equation*}
Hg(V)=
\begin{cases}
 Sp(V) \text{ or } SL(2)\times SO(4) \text{ acting by product of standard representations} &\mbox{if } w \text{ is odd}\\
 SO(V) \text{ or } SO(7) \text{ acting by spin representation }
 &\mbox{if } w \text{ is even.}
 \end{cases}
 \end{equation*}
Moreover, all of the above groups occur.
\end{prop}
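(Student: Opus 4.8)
The plan is to treat the odd-weight and even-weight cases separately, handling the former by reduction to abelian fourfolds and the latter by the representation theory of Sections \ref{section5}--\ref{hgreps}. For odd $w$, I would use the equivalence of Remark \ref{equiv}, which preserves polarizability and identifies $V$ with a weight-$1$ polarizable Hodge structure $V'$ of Hodge numbers $(4,4)$ and endomorphism algebra $\mathbb{Q}$, that is, with a simple complex abelian fourfold whose endomorphism algebra is $\mathbb{Q}$. Since for $z\in U_1$ one has $h_V(z)=h_{V'}(z^w)$ and $z\mapsto z^w$ is a surjective isogeny of $U_1$, the two circles have the same image, whence $Hg(V)=Hg(V')$. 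The statement for odd weight is then exactly the theorem of Moonen--Zarhin \cite{fourfold}: the Hodge group is the Lefschetz group $Sp(V)$ except for the additional group $SL(2)\times SO(4)$ acting by the product of standard representations, arising from Mumford's construction; both occur.

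For even $w$ the polarization is symmetric, so $V_\mathbb{C}$ is orthogonal, and since $L=\mathbb{Q}$ it is an irreducible $8$-dimensional representation of $H_\mathbb{C}$, which is semisimple by Remark \ref{ss}; moreover $H\subseteq SO(V)$. Writing $V_\mathbb{C}=\rho_1\boxtimes\cdots\boxtimes\rho_s$ as in Section \ref{section5}, each nontrivial $\rho_i$ is self-dual and even-dimensional (Lemma \ref{tablelem}), the number of symplectic factors is even, and the nontrivial factor dimensions multiply to $8$. Running through the factorizations $8$, $4\cdot 2$, and $2\cdot 2\cdot 2$ and consulting Table \ref{selfdual}, the orthogonal possibilities for $H_\mathbb{C}$ acting on $V_\mathbb{C}$ are: a single simple factor of type $D_4$ (the standard or either half-spin representation, all $8$-dimensional) or of type $B_3$ (the $8$-dimensional spin representation); or the product $Sp(4)\times SL(2)$ of type $C_2\times A_1$ acting by the product of standard representations. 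The remaining factorization $2\cdot 2\cdot 2$ gives type $A_1^3$, whose representation is symplectic and is therefore excluded in even weight.

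Corollary \ref{sl2cor} eliminates $Sp(4)\times SL(2)$, since here the complementary factor $Sp(4)=C_2$ contains no $SL(2)$-factor. In every $D_4$ case $H_\mathbb{C}$ is a connected $28$-dimensional subgroup of the $28$-dimensional group $SO(V)\cong SO(8)$, hence equals $SO(V)$ regardless of which $8$-dimensional representation realizes the action; in the $B_3$ case the spin representation embeds the $21$-dimensional group faithfully as a proper subgroup, yielding the group $SO(7)$ acting by its spin representation. Thus $Hg(V)$ is either $SO(V)$ or $SO(7)$. That $SO(V)$ occurs is the generic case \cite[Corollary II.A.6]{domain}. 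To realize $SO(7)$, following the template of Propositions \ref{rational} and \ref{twiceodd}, I would take $\rho$ to be the $8$-dimensional spin representation and $\phi\colon U_1\to SO(7)_\mathbb{R}$ the homomorphism corresponding to the cocharacter $2w\,e_1$ of the maximal torus, so that the eight spin weights $\tfrac12(\pm e_1\pm e_2\pm e_3)$ take the two values $\pm w$, each with multiplicity $4$; then $h=\rho\circ\phi$ is a polarized Hodge structure with Hodge numbers $(4,0,\ldots,0,4)$, and since the relevant $\mathbb{Q}$-form is $\mathbb{Q}$-simple, Corollary \ref{mtdomaincor} produces a very general Hodge structure in its Mumford--Tate domain with Hodge group $SO(7)$.

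The step I expect to be most delicate is this last construction: choosing a real form of the type-$B_3$ group for which $\phi$ exists and, together with the spin representation, yields a genuinely polarized Hodge structure, i.e.\ verifying the positivity condition in the definition of a polarization, with exactly the Hodge numbers $(4,0,\ldots,0,4)$. By contrast, the enumeration via Lemma \ref{tablelem} and Table \ref{selfdual}, and the elimination of $Sp(4)\times SL(2)$ through Corollary \ref{sl2cor}, though the heart of the argument, are routine.
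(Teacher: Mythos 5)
Your proposal is correct and follows essentially the same route as the paper: the odd-weight case is reduced to Moonen--Zarhin via Remark \ref{equiv}, and the even-weight case uses Lemma \ref{tablelem} and Table \ref{selfdual} to reduce to $SO(8)$, $SO(7)$ (spin), or $SL(2)\times Sp(4)$, eliminates the last by Corollary \ref{sl2cor}, cites \cite[Corollary II.A.6]{domain} for the generic case, and realizes $SO(7)$ by the cocharacter with spin weights $\pm w$ together with Corollary \ref{mtdomaincor} (the paper writes this cocharacter as $1\mapsto 2w(\alpha_1\wedge a_1)$ in a coordinate-free description of the spin representation, which is the same as your $2we_1$). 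Your explicit dimension count disposing of the $D_4$ half-spin realizations is a small refinement the paper leaves implicit, and the polarization positivity you flag as delicate is asserted without further detail in the paper as well.
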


\begin{proof}

When $V$ is of odd weight, the result follows using the equivalence of Remark \ref{equiv} together with the analogous statement for abelian fourfolds proved by Moonen and Zarhin \cite[4.1]{fourfold}. Thus we may assume that $V$ is of even weight.

In this case, using Table \ref{exphg}, the Lefschetz group of $V$ is $SO(V)\cong SO(8)$. Since the endomorphism algebra $L$ is equal to $\mathbb{Q}$, the representation $V$ of $H$ is irreducible. Applying Lemma \ref{tablelem}, the possibilities for $H$ acting on $V$ are:
\begin{enumerate}
\item $SO(8)$ acting by the standard representation, 
\item $SO(7)$ acting by the spin representation
\item $SL(2)\times Sp(4)$ acting by the tensor product of the two standard representations.
\end{enumerate}
However, using Corollary \ref{sl2cor} we may eliminate $SL(2)\times Sp(4)$ as a possibility. As in the preceding proofs, the case $H=SO(V)\cong SO(8)$ is the generic case and thus is always possible \cite[Corollary II.A.6]{domain}. Thus, it remains to show that the Hodge group $SO(7)$ acting by the spin representation is also possible. 

To construct the spin representation $\rho$, choose a pair 
$(W,W^*)$
of maximal isotropic subspaces of $\mathbb{Q}^7$ equipped with a symmetric bilinear form $(,)$ such that $W\cap W^*=0.$
If $a_1,a_2,a_3$ is a basis for $W$, then there is a unique basis $\alpha_1,\alpha_2,\alpha_3$ of $W^*$ such that for all $i$ and $j$
$(a_i,\alpha_j)=\delta_{ij}.$
Now consider the element $\psi_{x\wedge y}$ of the Lie algebra $\mathfrak{so}_7$ given by
\[\psi_{x\wedge y}(v)=2((y,v)x-(x,v)y)).\]
We can identify $\bigwedge^2\mathbb{Q}^7$ with  the Lie algebra $\mathfrak{so}_7$ via the map
\[x\wedge y\mapsto \psi_{x\wedge y}.\] 

Now let $\phi\colon U_1\rightarrow SO(7,\mathbb{R})$ be induced by the map on Lie algebras sending
\[1\mapsto 2w(\alpha_1\wedge a_1).\]
Then the data $(SO(7), \rho,\phi)$ defines a Hodge representation with Hodge numbers $(4,0,\ldots,0,4)$. Since $SO(7)$ is $\mathbb{Q}$-simple, Corollary \ref{mtdomaincor} finishes the proof.

\end{proof}

\section{Hodge Groups for Type II/III Endomorphism Algebras}\label{typeii/iii}

\begin{prop}\label{oddquat}
Let $V$ be a simple polarizable $\mathbb{Q}$-Hodge structure of weight $w\ge1$ with Hodge numbers $(n,0,\ldots,0,n)$ and endomorphism algebra $L$ of Type II or III such that $m=\frac{2n}{[L:\mathbb{Q}]}$ is odd. Let $B\cong M_m(L^{\mathrm{op}})$ be the centralizer of $L$ in $\mathrm{End}_\mathbb{Q}(V)$ and let $F$ be the center of $L$. Then
\begin{enumerate}
\item If $L$ is of Type II, then:
\begin{equation*}
Hg(V)=
\begin{cases}
R_{F/\mathbb{Q}}Sp(B,^{-}) &\mbox{if } w \text{ is odd}\\
R_{F/\mathbb{Q}}O^+(B,^{-}) \text{ or } R_{F/\mathbb{Q}}SU(2^k) \text{ } \left(\text{with } 2m={{2^k}\choose {2^{k-1}}} \text{ for } k\ge3\right) &\mbox{if } w \text{ is even.}
\end{cases}
\end{equation*}
\item If $L$ is of Type III, then:
\begin{equation*}
Hg(V)=
\begin{cases}
R_{F/\mathbb{Q}}O^+(B,^{-}) \text{ or } R_{F/\mathbb{Q}}SU(2^k) \text{ } \left( \text{with } 2m={{2^k}\choose {2^{k-1}}} \text{ for }k\ge3\right) &\mbox{if } w \text{ is odd}\\
R_{F/\mathbb{Q}}Sp(B,^{-}) &\mbox{if } w \text{ is even.}
\end{cases}
\end{equation*}
\end{enumerate}

Additionally, when $L$ is of Type II and $w$ is even or when $L$ is of Type III and $w$ is odd, then $m\ge3$ and both possible groups occur. 
\end{prop}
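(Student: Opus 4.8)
The plan is to run the representation-theoretic machinery of Propositions~\ref{rational} and~\ref{2TypeI} in the quaternionic setting, where $q=2$ and each $W_\sigma$ has complex dimension $2m$. Since $L$ is of Type~II or III it has no Type~IV factors, so $H=Hg(V)$ is semisimple (Remark~\ref{ss}), and Remark~\ref{lefrem} together with Table~\ref{exphg} gives $H\subseteq Lef(V)$, which is $R_{F/\mathbb{Q}}Sp(B,^{-})$ in the symplectic cases (Type~II with $w$ odd, Type~III with $w$ even) and $R_{F/\mathbb{Q}}O^{+}(B,^{-})$ in the orthogonal cases (Type~II with $w$ even, Type~III with $w$ odd). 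In particular $W_\sigma$ inherits a nondegenerate $H$-invariant form from the Lefschetz group, symplectic in the first cases and orthogonal in the second. Because $m$ is odd, $\dim_\mathbb{C} W_\sigma=2m\equiv 2\pmod{4}$. Writing $W_\sigma=\rho_1\boxtimes\cdots\boxtimes\rho_s$, self-duality of $W_\sigma$ forces each nontrivial $\rho_i$ to be self-dual, hence even-dimensional by Lemma~\ref{tablelem}(\ref{TL2}); since a product of two even-dimensional factors would be divisible by $4$, there is exactly one nontrivial $\rho_i$.

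In the symplectic cases this single nontrivial factor must itself be symplectic, so Lemma~\ref{tablelem}(\ref{TL3}) identifies it with the standard representation of $\mathfrak{sp}_{2m}$ (type $C_m$, $m$ odd). In the orthogonal cases it is orthogonal, so Lemma~\ref{tablelem}(\ref{TL4}) gives either the standard representation of $\mathfrak{so}_{2m}$ (type $D_m$, $m$ odd) or, precisely when $2m=\binom{2^k}{2^{k-1}}$ for some $k\ge 3$, the representation $\bigwedge^{2^{k-1}}(\mathrm{St})$ of $\mathfrak{sl}_{2^k}$ (type $A_{2^k-1}$). None of the types $C_m$, $D_m$ with $m$ odd, or $A_{2^k-1}$ is $D_4$, so Lemma~\ref{goursat} applies and yields $\mathrm{Lie}(H)_\mathbb{C}=\prod_{\sigma\in\Sigma(F)}(\mathfrak{g}_i)_\sigma$. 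Since $\mathrm{Aut}(\mathbb{C}/\mathbb{Q})$ permutes the embeddings $\sigma$ transitively and $H$ is defined over $\mathbb{Q}$, all factors have the same type, giving $H=R_{F/\mathbb{Q}}Sp(B,^{-})$ in the symplectic cases and $H=R_{F/\mathbb{Q}}O^{+}(B,^{-})$ or $R_{F/\mathbb{Q}}SU(2^k)$ in the orthogonal ones. The bound $m\ge 3$ is immediate in the orthogonal cases: $m$ is odd, and $m=1$ is exactly the excluded exceptional case of Theorem~\ref{totthm} (Type~II, $m=1$ in even weight; Type~III, $m=1$ in odd weight).

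It remains to show that in the orthogonal cases both listed groups actually occur. The Lefschetz group $R_{F/\mathbb{Q}}O^{+}(B,^{-})$ is the Hodge group of a very general point of its period domain \cite[Corollary~II.A.6]{domain}. For $R_{F/\mathbb{Q}}SU(2^k)$, when $2m=\binom{2^k}{2^{k-1}}$, I would imitate the construction in Proposition~\ref{rational}: fix a quaternion algebra $L$ over $F$ of the prescribed type, set $B=M_m(L^{\mathrm{op}})$, and embed $R_{F/\mathbb{Q}}SU(2^k)$ into $R_{F/\mathbb{Q}}O^{+}(B,^{-})\subseteq SO(V)$ by the orthogonal representation $\rho$ given over each $\sigma$ by $\bigwedge^{2^{k-1}}$ of the standard representation. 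Defining $\phi\colon U_1\to R_{F/\mathbb{Q}}SU(2^k)_\mathbb{R}$ by the diagonal cocharacter $z\mapsto \mathrm{diag}\bigl(z^{-\frac{w}{2^{k-1}}}\,\mathrm{Id}_{2^k-1},\, z^{\frac{(2^k-1)w}{2^{k-1}}}\bigr)$ in each factor makes $(R_{F/\mathbb{Q}}SU(2^k),\rho,\phi)$ a Hodge representation of weight $w$ with Hodge numbers $(n,0,\ldots,0,n)$. As $R_{F/\mathbb{Q}}SU(2^k)$ is $\mathbb{Q}$-simple, Corollary~\ref{mtdomaincor} then shows that a very general member of the associated Mumford--Tate domain has Hodge group $R_{F/\mathbb{Q}}SU(2^k)$, and its endomorphism algebra, being the commutant of this action (where the irreducible factors $W_\sigma$ are pairwise non-isomorphic across $\sigma$), is the quaternion algebra $L$.

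The step I expect to be the main obstacle is this last one: guaranteeing that the algebra $L$ produced is genuinely of the prescribed Albert type---totally indefinite (Type~II) when $w$ is even and totally definite (Type~III) when $w$ is odd---rather than a split algebra $M_2(F)$ or a quaternion algebra of the wrong signature, and that $V$ is simple so that $L$ is a division algebra. This requires analyzing the real structure of the embedding $R_{F/\mathbb{Q}}SU(2^k)\hookrightarrow R_{F/\mathbb{Q}}O^{+}(B,^{-})$ and of $\phi$ at each real place of $F$, and checking that the multiplicity-two space on which $L$ acts carries a form of the signature forcing the correct definiteness (symmetric for even $w$, alternating for odd $w$). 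Alternatively, for the odd-weight case one can sidestep part of this analysis by passing through the equivalence of Remark~\ref{equiv} to simple abelian varieties with quaternionic multiplication and invoking the corresponding results there.
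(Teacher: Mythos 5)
Your proof is correct and follows essentially the same route as the paper's: the paper's own proof sets up the decomposition $V_\mathbb{C}=\bigoplus_{\sigma}2W_\sigma$ into $2m$-dimensional irreducible orthogonal (resp.\ symplectic) representations and then states that the rest proceeds identically to the proof of Proposition \ref{rational}, which is precisely the argument you spell out (Lemma \ref{tablelem} forcing a single nontrivial self-dual factor, Lemma \ref{goursat}, and the Hodge-representation construction with Corollary \ref{mtdomaincor} for existence). The verification you flag at the end---that the very general member of the Mumford--Tate domain is simple with endomorphism algebra a division quaternion algebra of the prescribed Albert type---is likewise left implicit in the paper, so your write-up is if anything more explicit on that point.
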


\begin{proof}
The restriction on $m$ when $L$ is of Type II and $w$ is even or when $L$ is of Type III and $w$ is odd follows from Totaro's classification (see Section \ref{endclass}).

Let $H=Hg(V)$ be the Hodge group of $V$, which by Remark \ref{ss} is semisimple. Remark \ref{lefrem} and Table \ref{exphg}, when $L$ is of Type II and $w$ is even or when L is of Type III and $w$ is odd (respectively when $L$ is of Type II and $w$ is odd or when L is of Type III and $w$ is even) yield
\[\begin{array}{cc}
H\subseteq R_{F/\mathbb{Q}}O^+(B,^{-}) & \text{ (respectively  }R_{F/\mathbb{Q}}Sp(B,^{-}) . \text{)}
\end{array}\]
Here $O^+(B,^{-})$ is an $F$-form of $SO(2m)$ and $Sp(B,^{-})$ is an $F$-form of $Sp(2m)$.

Thus, in the notation of Section \ref{section5}, the decomposition $L \otimes _\mathbb{Q} \mathbb{C}\cong \prod M_2(\mathbb{C})$ indexed by the set of embeddings $\Sigma(F)$ induces a decomposition 
$$V_\mathbb{C}=\bigoplus_{\sigma\in \Sigma(F)}2W_{\sigma},$$
where each $W_\sigma$ is a $2m$-dimensional irreducible orthogonal (respectively symplectic) $H_\mathbb{C}$-representation, where $m$ is odd. The rest of the proof then proceeds identically to the proof of Proposition \ref{rational}.

\end{proof}


\begin{prop}\label{2quat}
Let $V$ be a simple polarizable $\mathbb{Q}$-Hodge structure of weight $w\ge1$ with Hodge numbers $(n,0,\ldots,0,n)$ and endomorphism algebra $L$ of $V$ of Type II or III such that $[L:\mathbb{Q}]=n$. Let $B\cong M_2(L^{\mathrm{op}})$ be the centralizer of $L$ in $\mathrm{End}_\mathbb{Q}(V)$ and let $F$ be the center of $L$. Then
\begin{enumerate}
\item If $L$ is of Type II, then:
\begin{equation*}
Hg(V)=
\begin{cases}
R_{F/\mathbb{Q}}Sp(B,^{-}) &\mbox{if } w \text{ is odd}\\
R_{F/\mathbb{Q}}O^+(B,^{-}) &\mbox{if } w \text{ is even.}
\end{cases}
\end{equation*}
\item If $L$ is of Type III, then:
\begin{equation*}
Hg(V)=
\begin{cases}
R_{F/\mathbb{Q}}O^+(B,^{-}) &\mbox{if } w \text{ is odd}\\
R_{F/\mathbb{Q}}Sp(B,^{-}) &\mbox{if } w \text{ is even.}
\end{cases}
\end{equation*}
\end{enumerate}
\end{prop}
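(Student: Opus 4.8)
The plan is to follow the template established in the earlier propositions (notably Proposition \ref{2TypeI} and the even-weight analysis in Proposition \ref{oddquat}), since this proposition is the $m=2$ analogue of Proposition \ref{oddquat}. First I would set $H=Hg(V)$, which is semisimple by Remark \ref{ss} since Types II and III have no Type IV factors. Applying Remark \ref{lefrem} together with Table \ref{exphg} gives the relevant inclusion: in the cases where the Lefschetz group is $R_{F/\mathbb{Q}}Sp(B,^{-})$ (an $F$-form of $Sp(4)$ since $m=2$), the representation $W_\sigma$ coming from the decomposition $L\otimes_\mathbb{Q}\mathbb{C}\cong\prod_{\sigma\in\Sigma(F)}M_2(\mathbb{C})$ is $4$-dimensional symplectic; in the cases where the Lefschetz group is $R_{F/\mathbb{Q}}O^+(B,^{-})$ (an $F$-form of $SO(4)$), the $W_\sigma$ are $4$-dimensional orthogonal. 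The decomposition is $V_\mathbb{C}=\bigoplus_{\sigma\in\Sigma(F)}2W_\sigma$, with each $W_\sigma$ irreducible by Lemma \ref{irreducible}.

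Next I would invoke Lemma \ref{tablelem} to classify the $4$-dimensional irreducible $W_\sigma$. In the symplectic case a $4$-dimensional symplectic minuscule representation with the number of symplectic factors odd forces $W_\sigma$ to be of type $C_2$, the standard representation of $\mathfrak{sp}_4$; then Lemma \ref{goursat} immediately yields $\mathrm{Lie}(H)_\mathbb{C}=\prod_{\sigma\in\Sigma(F)}\mathfrak{sp}_4$, so that $H=R_{F/\mathbb{Q}}Sp(B,^{-})$, exactly as in the odd-weight branch of Proposition \ref{2TypeI}. This disposes of Type II odd weight and Type III even weight with no exceptional behavior, which matches the absence of a second possible group in the statement.

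The orthogonal case is the substance of the proposition and parallels the even-weight argument in Proposition \ref{2TypeI}. Here $W_\sigma$ is $4$-dimensional orthogonal, so by Lemma \ref{tablelem} it is of type $A_1\times A_1$, acting by the tensor product $U_{\sigma,1}\otimes U_{\sigma,2}$ of two standard $\mathfrak{sl}_2$-representations. I would then split into the two cases according to whether $R_{F/\mathbb{Q}}O^+(B,^{-})$ is simple. When it is simple, a potential strict inclusion would force, via the Goursat-type matrix argument of Proposition \ref{2TypeI} (equation (\ref{goursatmatrix})), an isomorphism $W_{\sigma_1}\cong W_{\sigma_2}$ between distinct embeddings, contradicting that $L$ has center $F$ and the $W_\sigma$ are genuinely distinct across $\Sigma(F)$; hence $H=R_{F/\mathbb{Q}}O^+(B,^{-})$. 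When $O^+(B,^{-})$ is not simple (the discriminant-$1$ case splitting it as a product of two $SL_1$ of quaternion algebras over $F$), I would use Lemma \ref{sl2lem} and the Mumford-Tate domain analysis exactly as in Proposition \ref{2TypeI} to rule out the graph of an isomorphism, on the grounds that the graph forces $V$ to lie on the non-simple components of the period domain identified by Totaro, contradicting simplicity of $V$.

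The main obstacle I anticipate is the non-simple orthogonal subcase: one must carefully identify, in the quaternionic setting of a Type II/III algebra, precisely when $O^+(B,^{-})$ fails to be simple and confirm that the splitting into $SL(1,L')\cdot SL(1,L'^{\mathrm{op}})$ and the attendant Mumford-Tate domain description transfer verbatim from Proposition \ref{2TypeI}. The key point to verify is that the ``graph of an isomorphism'' Hodge group always produces a non-simple $V$, so that simplicity of $V$ forces $H$ to be the full product; unlike the $n=2p$ or $n=4$ settings, here the dimensions are too small for any exceptional ($SU(2^k)$ or spin) group to intervene, so no additional Hodge group arises and the statement has no second case in the orthogonal branch.
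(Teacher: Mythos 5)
Your proposal is correct and matches the paper's proof, which simply notes that the arguments of Proposition \ref{oddquat} give semisimplicity and the decomposition $V_\mathbb{C}=\bigoplus_{\sigma\in\Sigma(F)}2W_\sigma$ into $4$-dimensional irreducible orthogonal (resp.\ symplectic) representations, and then defers entirely to the proof of Proposition \ref{2TypeI}. Your expansion of that deferred argument --- the $C_2$ case via Lemma \ref{goursat}, and the $A_1\times A_1$ case split according to whether $O^+(B,^{-})$ is simple, using the Goursat matrix and the Mumford--Tate domain analysis --- is exactly what the paper intends.
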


\begin{proof}
By identical arguments as those in Proposition \ref{oddquat}, the Hodge group $H$ is semisimple and the decomposition $L \otimes _\mathbb{Q} \mathbb{C}\cong \prod M_2(\mathbb{C})$ indexed by the set of embeddings $\Sigma(F)$ induces a decomposition 
$$V_\mathbb{C}=\bigoplus_{\sigma\in \Sigma(F)}2W_{\sigma},$$
where each $W_\sigma$ is a $4$-dimensional irreducible orthogonal (respectively symplectic) $H_\mathbb{C}$-representation. The rest of the proof then proceeds identically to the proof of Proposition \ref{2TypeI}.
\end{proof}


\begin{prop}\label{4quat}
Let $V$ be a simple polarizable $\mathbb{Q}$-Hodge structure of weight $w\ge1$ with Hodge numbers $(n,0,\ldots,0,n)$, where $n$ is four times an odd number, such that the endomorphism algebra $L$ of $V$ is a quaternion algebra over $\mathbb{Q}$. Let $B\cong M_{n/2}(L^{\mathrm{op}})$ be the centralizer of $L$ in $\mathrm{End}_\mathbb{Q}(V)$. Then:
\begin{enumerate}
\item If $L$ is of Type II, then:
\begin{equation*}
Hg(V)=
\begin{cases}
Sp(B,^{-})&\mbox{if } w \text{ is odd}\\
O^+(B,^{-}) \text{ or } SU(2^k) \text{ } \left( \text{ for } k\ge3 \text{ and }  n={{2^k}\choose {2^{k-1}}}\right) &\mbox{if } w \text{ is even.}
\end{cases}
\end{equation*}
\item If $L$ is of Type III, then:
\begin{equation*}
Hg(V)=
\begin{cases}
O^+(B,^{-}) \text{ or } SU(2^k) \text{ } \left( \text{ for } k\ge3 \text{ and } n={{2^k}\choose {2^{k-1}}}\right) &\mbox{if } w \text{ is odd}\\
Sp(B,^{-})&\mbox{if } w \text{ is even.}
\end{cases}
\end{equation*}
\end{enumerate}
\end{prop}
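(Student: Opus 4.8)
The plan is to reduce the whole statement to the representation theory of the single irreducible constituent $W_\sigma$, exactly as in Propositions \ref{oddquat} and \ref{2quat}, and then to run the argument of Proposition \ref{twiceodd} with $W_\sigma$ in the role of $V$. The structural preliminaries are immediate: since $L$ is of Type II or III, Remark \ref{ss} gives that $H = Hg(V)$ is semisimple, and Remark \ref{lefrem} together with Table \ref{exphg} gives $H \subseteq O^+(B,^{-})$ when $W_\sigma$ is orthogonal (namely $L$ of Type II with $w$ even, or Type III with $w$ odd) and $H \subseteq Sp(B,^{-})$ when $W_\sigma$ is symplectic (the two remaining cases); these are $\mathbb{Q}$-forms of $SO(2m)$ and $Sp(2m)$ with $2m = n = 4l$. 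As $L$ is a quaternion algebra over $\mathbb{Q}$ we have $F = \mathbb{Q}$, so $L\otimes_\mathbb{Q}\mathbb{C}\cong M_2(\mathbb{C})$ is indexed by a single embedding and $V_\mathbb{C} = 2W_\sigma$, where $W_\sigma$ is the $2m$-dimensional irreducible $H_\mathbb{C}$-representation of Section \ref{section5}.

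The core is then the analysis of $W_\sigma = \rho_1\boxtimes\cdots\boxtimes\rho_s$ as a representation of $\mathrm{Lie}(H)_\mathbb{C}$. I would apply Lemma \ref{tablelem} to constrain each nontrivial $\rho_i$ and use Corollary \ref{sl2cor} to discard every configuration in which some $\mathfrak{g}_i\cong\mathfrak{sl}_2$. In the symplectic cases this should leave only the standard representation of $\mathfrak{sp}_{2m}$, giving $H = Sp(B,^{-})$; in the orthogonal cases it should leave the standard representation of $\mathfrak{so}_{2m}$ together with the exterior power $\bigwedge^{2^{k-1}}(\mathrm{St})$ of $\mathfrak{sl}_{2^k}$ (the latter only when $2m = \binom{2^k}{2^{k-1}}$), giving $H = O^+(B,^{-})$ or $H = SU(2^k)$. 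To show that the non-generic orthogonal group genuinely occurs when the numerical condition holds, I would give a Hodge-representation construction analogous to that of Proposition \ref{rational}: an explicit cocharacter $\phi\colon U_1\to SU(2^k,\mathbb{R})$ defining a Hodge representation with Hodge numbers $(n,0,\ldots,0,n)$, after which Corollary \ref{mtdomaincor} applies since $SU(2^k)$ is $\mathbb{Q}$-simple.

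The step I expect to be the main obstacle is the dimension bookkeeping inside Lemma \ref{tablelem}. Because $n = 4l$ with $l$ odd, one has $\dim_\mathbb{C}W_\sigma\equiv 4\pmod 8$, so $\dim W_\sigma$ is divisible by $4$ and not merely by $2$; parts \ref{TL3} and \ref{TL4} of Lemma \ref{tablelem}, which presuppose $\dim\equiv 2\pmod 4$, therefore do not apply to $W_\sigma$ directly. The configurations that survive and must still be excluded are products $\rho_1\boxtimes\rho_2$ of two self-dual factors of dimension $\equiv 2\pmod 4$, neither of which is $\mathfrak{sl}_2$ (so Corollary \ref{sl2cor} does not reach them), and single higher exterior powers $\bigwedge^{j}(\mathrm{St})$ with $j$ not a power of $2$. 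I would rule out the former by extending the weight analysis of Lemma \ref{sl2lem}, the point being that the Hodge numbers $(n,0,\ldots,0,n)$ force $\gamma$ to take only two values on $W_\sigma$, which is incompatible with two factors acting nontrivially. Excluding the stray exterior powers, together with checking that the exotic Hodge structure produced by Corollary \ref{mtdomaincor} has endomorphism algebra exactly the quaternion algebra $L$, is the remaining delicate part, where I would use the constraint that the real type of $W_\sigma$ match $L\otimes_\mathbb{Q}\mathbb{R}$ (quaternionic for Type III, a matrix algebra for Type II).
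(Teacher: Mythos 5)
Your overall route is the paper's: the published proof of Proposition \ref{4quat} consists of exactly the reduction you describe --- Lemma \ref{irreducible} produces an irreducible $n$-dimensional orthogonal (resp.\ symplectic) representation $W$ of the semisimple group $H_\mathbb{C}$ with $V_\mathbb{C}=W\oplus W$ --- followed by the single sentence that the rest ``proceeds identically to the proof of Proposition \ref{twiceodd}.'' So your structural setup, your use of Lemma \ref{tablelem} and Corollary \ref{sl2cor}, and your Hodge-representation construction of the exotic group all match the paper. (On that last point, note that $\binom{2^k}{2^{k-1}}\equiv 2\pmod 4$ while $n\equiv 4\pmod 8$ here, so the $SU(2^k)$ alternative in the statement can never be realized and no existence construction is actually required.)

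Where you part company with the paper is precisely the ``dimension bookkeeping'' you flag, and you are right that it is a substantive issue: since $\dim W\equiv 4\pmod 8$, parts (3) and (4) of Lemma \ref{tablelem} do not apply to $W$ itself, and the short lists asserted in the proof of Proposition \ref{twiceodd} omit exactly the configurations you name. Your plan for products $\rho_1\boxtimes\rho_2$ with both factors of dimension at least $6$ (possible once $n\ge 36$) is sound in outline --- the two-weight condition on $\gamma$ forces $\gamma$ to act by a scalar, hence trivially, on one semisimple factor, generalizing Lemma \ref{sl2lem} --- though you must still treat the case of two isomorphic factors permuted by $\mathrm{Gal}(\overline{\mathbb{Q}}/\mathbb{Q})$, where the argument of Corollary \ref{sl2cor} does not immediately conclude. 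The genuine gap is the stray exterior powers: for $n=20=4\cdot 5$ in the symplectic cases, $\bigwedge^3(\mathrm{St})$ of $\mathfrak{sl}_6$ is a $20$-dimensional symplectic minuscule representation, and the cocharacter with weights $(\alpha,\beta,\beta,\beta,\beta,\beta)$ on $\mathbb{C}^6$ induces exactly two weights on $\bigwedge^3\mathbb{C}^6$ with multiplicities $(10,10)$; so the weight analysis does not eliminate it, and nothing else in your proposal does either. Your suggestion to ``match the real type'' would have to be developed into an actual positivity-of-polarization argument in the style of Deligne's classification of weight-one Hodge representations, and in the even-weight symplectic case (Type III) even that template is not available off the shelf. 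Until that configuration is disposed of, your proof is incomplete --- at the same point, it should be said, where the paper's own citation of Proposition \ref{twiceodd} leaves the matter unaddressed.
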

\begin{proof}
Again, we proceed as in the first half of the proof of Proposition \ref{oddquat}, using Lemma \ref{irreducible} to get an irreducible $n$-dimensional orthogonal (respectively symplectic) representation $W$ of the semisimple group $H_\mathbb{C}$ such that $V_\mathbb{C}=W\oplus W$. Since $n=2k$, where $k$ is, by hypothesis, twice an odd number, the rest of the proof then proceeds identically to the proof of Proposition \ref{twiceodd}.
\end{proof}

\section{Hodge Groups for Type IV Endomorphism Algebras}\label{typeiv}
Let us briefly recall the notation introduced in Section \ref{endclass} about the Type IV endomorphism algebra case. Let $V$ be a simple polarizable $\mathbb{Q}$-Hodge structure of weight $w\ge 1$ with Hodge numbers $(n,0,\ldots,0,n)$ whose endomorphism algebra $L$ is of Type IV in Albert's classification. Then the center $F_0$ of $L$ is a CM field (see Section \ref{endalgsection}) with maximal totally real subfield denoted by $F$.

Let us write $g=[F:\mathbb{Q}]$, $2n=m[L:\mathbb{Q}]$, and $q^2=[L:F_0]$. Now if $\sigma(F_0)=\{\sigma_1,\ldots,\sigma_g,\overline{\sigma}_1,\ldots,\overline{\sigma}_g\}$ is the set of embeddings of $F_0$ into $\mathbb{C}$, then $L\otimes _\mathbb{Q}\mathbb{C}$ is isomorphic to $2g$ copies of $M_q(\mathbb{C})$, one for each embedding $\sigma\in \Sigma(F_0)$, and this decomposition of $L\otimes _\mathbb{Q}\mathbb{C}$ yields a decomposition of $V^{w,0}\subset V_\mathbb{C}$ into summands $V^{w,0}(\sigma)$ on which $F_0$ acts via the embedding $\sigma \in \Sigma(F_0)$.  Letting $n_\sigma$ denote the complex dimension of $V^{w,0}(\sigma)$, we then have that $n_{\sigma_i}+n_{\overline{\sigma}_i}=mq$ for $i=1,\ldots,g$.

\begin{prop}\label{imaginary}
Let  $V$ be a simple polarizable $\mathbb{Q}$-Hodge structure of weight $w\ge1$ with Hodge numbers $(n,0,\ldots, 0,n)$ and endomorphism algebra $L$ an imaginary quadratic field.  Letting $\Sigma(L)=\{\sigma,\overline{\sigma}\}$ be the set of embeddings of $L$ into $\mathbb{C}$ and $B\cong M_n(L^{\mathrm{op}})$ be the centralizer of $L$ in $\mathrm{End}_\mathbb{Q}(V)$, if $n_\sigma$ and $n_{\overline{\sigma}}$, as defined above, are coprime, then $Hg(V)=U(B,^-).$
\end{prop}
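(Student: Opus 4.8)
The plan is to show that the inclusion $Hg(V)\subseteq Lef(V)$ furnished by Remark \ref{lefrem} and Table \ref{exphg} is an equality. Note first that since $L$ is imaginary quadratic we have $F=\mathbb{Q}$, $g=1$, $q=1$, and $m=n$, so $Lef(V)=U(B,^{-})$ is a $\mathbb{Q}$-form of $GL(n)$. Write $H=Hg(V)$. The decomposition (\ref{DL1}) then reads $V_\mathbb{C}=W\oplus W^{*}$, where $W=W_\sigma$ is the $n$-dimensional submodule on which $L$ acts through $\sigma$, and $W$ is irreducible by Lemma \ref{irreducible}. The key preliminary observation is that the Hodge cocharacter $\gamma=h\circ\mu$ acts on $W$ with exactly the two weights $z\mapsto z^{-w}$ and $z\mapsto 1$, of multiplicities $\dim W^{w,0}=n_\sigma$ and $\dim W^{0,w}=\dim V^{w,0}(\overline{\sigma})=n_{\overline{\sigma}}$ respectively, the latter equality coming from complex conjugation interchanging $V^{w,0}(\overline{\sigma})$ and $V^{0,w}(\sigma)$.

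Next I would exploit the decomposition $W=\chi\boxtimes\rho_1\boxtimes\cdots\boxtimes\rho_s$ of (\ref{decompW}), where the nontrivial $\rho_i$ are the minuscule representations of the classical simple factors $\mathfrak{g}_i$ supplied by Lemma \ref{tablelem}. Since a $\gamma$-weight on $W$ is the sum of the central weight of $\chi$ with one $\gamma$-weight from each $\rho_i$, two factors each carrying two distinct $\gamma$-weights would already produce at least three distinct weights on $W$; as $\gamma$ has only two weights, at most one $\rho_i$, say $\rho_1$, is nontrivially $\gamma$-graded. If $s\ge 2$, then $\gamma$ is scalar on each $\rho_i$ with $i\ge 2$, so writing $a_1,b_1$ for the two $\gamma$-weight-space dimensions in $\rho_1$ and $P=\prod_{i\ge 2}\dim\rho_i\ge 2$, the two $\gamma$-weight spaces of $W$ have dimensions $n_\sigma=a_1P$ and $n_{\overline{\sigma}}=b_1P$, whence $P\mid\gcd(n_\sigma,n_{\overline{\sigma}})$, contradicting coprimality. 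Thus $s\le 1$: either $W$ is one-dimensional (the case $n=1$, where $H=U(B,^{-})$ is a torus), or $W=\chi\boxtimes\rho_1$ with $\rho_1$ an $n$-dimensional minuscule representation of a single classical simple $\mathfrak{g}_1$ carrying a two-weight grading of coprime multiplicities $n_\sigma,n_{\overline{\sigma}}$.

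The step I expect to be the main obstacle is the representation-theoretic census showing that this configuration forces $\mathfrak{g}_1=\mathfrak{sl}_n$ acting by its standard representation. Running through Table \ref{selfdual}: for the standard representations of $C_l$ and $D_l$ and for the (half-)spin representations of $B_l$ and $D_l$, every cocharacter with only two weight spaces splits $W$ into two subspaces of equal dimension, so the two multiplicities coincide and cannot be coprime; the only subtleties are the low-rank exceptional isomorphisms ($B_1\cong A_1$, $D_2\cong A_1\times A_1$, $D_3\cong A_3$), under which these representations become type-$A$ representations treated below. For $A_l$ with $\rho_1=\bigwedge^{j}(\mathrm{St})$, a two-weight grading occurs only when $j\in\{1,l\}$ or when one of the two weights on $\mathrm{St}$ has multiplicity $1$; the cases $j\in\{1,l\}$ give the standard representation or its dual for $\mathfrak{sl}_{l+1}=\mathfrak{sl}_n$, while the remaining possibility, namely $2\le j\le l-1$ with a weight of multiplicity $1$, is excluded because the identity $j\binom{l}{j}=(l+1-j)\binom{l}{j-1}$ gives $j\,n_\sigma=(l+1-j)\,n_{\overline{\sigma}}$, so coprimality forces $n_{\overline{\sigma}}\mid j$ and $n_\sigma\mid(l+1-j)$, hence $n=n_\sigma+n_{\overline{\sigma}}\le l+1$, contradicting $n=\binom{l+1}{j}\ge\binom{l+1}{2}>l+1$. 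Therefore $\mathfrak{g}_1=\mathfrak{sl}_n$ with $\rho_1$ standard, so $H^{\mathrm{der}}_\mathbb{C}=SL(W)$, i.e.\ $H^{\mathrm{der}}=SU(B,^{-})$.

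It remains to recover the full central torus. As $U(B,^{-})/SU(B,^{-})$ is a one-dimensional $\mathbb{Q}$-torus and $H\supseteq H^{\mathrm{der}}=SU(B,^{-})$, the connected group $H$ is either $SU(B,^{-})$ or all of $U(B,^{-})$. Computing the determinant of the circle on $W$ gives $\det_W h(z)=z^{w(n_{\overline{\sigma}}-n_\sigma)}$, which is nontrivial since coprimality forces $n_\sigma\ne n_{\overline{\sigma}}$; indeed $n_\sigma=n_{\overline{\sigma}}$ would give $n_\sigma=n_{\overline{\sigma}}=1$, the excluded exceptional case of Theorem \ref{totthm} with $m=2$, $q=1$. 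Hence $h(U_1)\not\subseteq SL(W)=SU(B,^{-})_\mathbb{C}$, so $H\ne SU(B,^{-})$, and therefore $H=U(B,^{-})$. The delicate point is the census of the preceding paragraph; everything else is bookkeeping with the two weights of $\gamma$.
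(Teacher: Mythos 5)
Your proof is correct, but it takes a genuinely different route from the paper's. The paper reduces everything to a single citation: after setting up the map $\overline{\rho}\colon M_\mathbb{C}\to GL(W_\sigma)$, it verifies the four hypotheses of Serre's Proposition~5 (the image is connected reductive with $\mathrm{End}_N W_\sigma=\mathbb{C}$, it contains the one-parameter subgroup acting as a homothety on $V^{w,0}(\sigma)$ and the identity on $V^{0,w}(\sigma)$, and $n_\sigma$, $n_{\overline{\sigma}}$ are coprime) to conclude $N=GL(W_\sigma)$ outright, and then recovers the full center by showing the determinant map $\delta\colon M\to R_{L/\mathbb{Q}}\mathbb{G}_m$ is surjective because $\delta\circ\mu(z)=(z^{n_\sigma},z^{n_{\overline{\sigma}}})$ is not the diagonal. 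You instead stay entirely inside the paper's own minuscule-weight machinery (Lemma \ref{tablelem}, Table \ref{selfdual}, the two-weight analysis of $\gamma$ as in Lemma \ref{sl2lem}): coprimality first kills any nontrivial tensor factorization $W=\rho_1\boxtimes\cdots\boxtimes\rho_s$ with $s\ge 2$, and then a census of two-valued gradings on the classical minuscule representations leaves only the standard representation of $\mathfrak{sl}_n$; your closing determinant computation on $W$ is essentially the paper's $\delta\circ\mu$ argument in disguise. What your route buys is self-containedness and a transparent accounting of exactly where coprimality is used (it enters three times: to kill products, to kill $\bigwedge^j$ for $2\le j\le l-1$ via $j\binom{l}{j}=(l+1-j)\binom{l}{j-1}$, and to force $n_\sigma\ne n_{\overline{\sigma}}$); what it costs is length, and two assertions that you state but do not prove, namely that a two-valued grading of a standard or (half-)spin representation of type $B$, $C$, $D$ has equal halves. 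These are true but do need a line of justification: for the self-dual ones the weight multiset is stable under negation, so the two values are $\pm v$ with equal multiplicities; for the half-spin representation of $D_l$ with $l$ odd, $l\ge 5$, the pair conditions $c_i+c_j\in\{0,d\}$ on even subsets force $c=(d,0,\ldots,0)$ up to the Weyl group, again giving equal halves. I would also spell out, in the $n=1$ branch, that $H$ is the full one-dimensional torus $U_L$ because it is nontrivial by Remark \ref{triv}. With those small additions your argument is complete.
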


\begin{proof}
The above result and its proof are analogous to those given in \cite[Theorem 3]{ribet1} for simple complex abelian varieties. Let $H=Hg(V)$ be the Hodge group of $V$. From  Remark \ref{lefrem}  and Table \ref{exphg}, we know $H\subseteq U(B,^-)$, so we just need to show this containment is an equality. 

As introduced in Section \ref{section5} in Equation (\ref{DL1}), the action of $L\otimes_\mathbb{Q}\mathbb{C}$ induces a decomposition
\begin{equation}\label{vecdecomp}V_\mathbb{C}=W_\sigma \oplus W_\sigma^*.\end{equation}

Let $M=MT(V)$ be the Mumford-Tate group of $V$ and consider the map
\[\overline{\rho}: M_\mathbb{C}\rightarrow \mathrm{GL}(W_\sigma)\]
induced by the action of $M_\mathbb{C}$ on $W_\sigma$. Let $N$ be the image of $\overline{\rho}$. 

Note firstly of all that $N$ is a reductive, connected subgroup of $\mathrm{GL}(W_\sigma)$. Secondly, because $L=\mathrm{End}_M(V)$, where $L\otimes _{\mathbb{Q}}\mathbb{C}\cong \mathbb{C}\oplus \mathbb{C},$
and because the action of $M$ is compatible with the decomposition in (\ref{vecdecomp}), we have must have 
$\mathrm{End}_NW_\sigma=\mathbb{C}.$ Thirdly, note that for $z\in \mathbb{C}^*$ the composition 
\[\overline{\rho} \circ h\circ \mu: \mathbb{G}_m\rightarrow \mathrm{GL}(W_\sigma)\]
acts as multiplication by $z^{-w}$ on $V^{w,0}(\sigma)$ and as the identity on $V^{0,w}(\sigma)$. Namely, $N$ contains the group of automorphisms of $W_\sigma$ that are a homothety on $V^{w,0}(\sigma)$ and the identity on $V^{0,w}(\sigma)$. Fourthly, the dimensions $n_\sigma=\dim V^{w,0}(\sigma)$ and $n_{\overline{\sigma}}=\dim V^{0,w}(\sigma)$ are coprime.

These four observations are exactly the situation of a result of Serre \cite[Proposition 5]{serre} which establishes that, under these circumstances, we have $N=\mathrm{GL}(W_\sigma).$ In particular, the fact that $\overline{\rho}$ surjects onto $\mathrm{GL}(W_\sigma)$ implies that the commutator subgroup of $M_\mathbb{C}$ surjects onto $\mathrm{SL}(W_\sigma)$.

The Lefschetz group $U(B,^-)$ of $V$ is a $\mathbb{Q}$-form of $\mathrm{GL}(n)$. Hence, by dimension arguments, in order to show that the Hodge group $H$ is equal to $U(B,^-)$, it is enough to show that the center of $M$ has dimension at least 2. To do this, it is enough to produce a two-dimensional torus which is a quotient of $M$. 

The inclusion $M\rightarrow \mathrm{GL}(_{L}V)$,
recalling that $_LV$ denotes $V$ considered as an $L$-vector space, yields a determinant map 
 \[\delta: M\rightarrow R_{L/\mathbb{Q}}\mathbb{G}_m.\]
 
 Now, the character group of $R_{L/\mathbb{Q}}\mathbb{G}_m$ is just the free abelian group on $\sigma$ and $\overline{\sigma}$. Thus 
\[(R_{L/\mathbb{Q}}\mathbb{G}_m)_{\mathbb{C}}\cong \mathbb{G}_m\times \mathbb{G}_m.\]
The Mumford-Tate group $M$ contains the torus of homotheties of $\mathrm{GL}(V)$. In the image over $\mathbb{C}$ of the map $\delta$, this torus of homotheties corresponds to the diagonal $\mathbb{G}_m$ in the expression $\mathbb{G}_m\times \mathbb{G}_m$. 

The composition 
\[\delta \circ \mu: \mathbb{G}_m \rightarrow \mathbb{G}_m\times \mathbb{G}_m\]
is given by
\[z\in \mathbb{C}^*\mapsto(z^{n_\sigma},z^{n_{\overline{\sigma}}}).\]
But since $n_\sigma$ and $n_{\overline{\sigma}}$ are coprime, they are not equal and so the image of $\delta \circ \mu$ is a subtorus of $\mathbb{G}_m\times \mathbb{G}_m$ which is not the diagonal. Hence $\delta$ is surjective. So, the Mumford-Tate group $M$ indeed has a  quotient which is a two-dimensional torus and thus we have $H=U(B,^-)$.\end{proof}

\subsection{$E$-Hodge Structures}

Let $E$ be a number field. Define an $E$-\emph{Hodge structure} to be a $\mathbb{Q}$-Hodge structure $V$ together with a homomorphism of $\mathbb{Q}$-algebras $E\rightarrow \mathrm{End}_{\mathbb{Q}-HS}(V).$

Suppose $E$ is, in fact, a totally real or CM field. Then, writing $a\to \overline{a}$ for the involution on $E$ given by complex conjugation (which is the identity involution if $E$ is totally real), we may define a \emph{polarized} $E$\emph{-Hodge structure} to be a polarized $\mathbb{Q}$-Hodge structure together with a homomorphism $E\rightarrow \mathrm{End}_{\mathbb{Q}-HS}(V)$ of $\mathbb{Q}$-algebras with involution. Namely, the form $\langle,\rangle:V\times V\rightarrow \mathbb{Q}$ satisfies
$\langle ax,y\rangle=\langle x,\overline{a}y\rangle$
for all $a\in E$ and all $x,y\in V$. 
In fact, if $V$ is an $E$-Hodge structure whose underlying $\mathbb{Q}$-Hodge structure is polarizable, then $V$ is polarizable as an $E$-Hodge structure \cite[Lemma 2.1]{totaro}. There does not seem to be a good definition of a polarized $E$-Hodge structure for $E$ a number field which is not totally real or a CM field. 

If $V$ is an $E$-Hodge structure of weight $w$, then each $V^{p,q}$ in the decomposition of $V\otimes_\mathbb{Q}\mathbb{C}$ is an $E\otimes _\mathbb{Q}\mathbb{C}$-module and so splits as a direct sum
$$V^{p,q}=\bigoplus_{\sigma \in \Sigma(E)}V^{p,q}(\sigma),$$
where $V^{p,q}(\sigma)$ is the subspace of $V\otimes _\mathbb{Q} \mathbb{C}$ where $E$ acts via $\sigma$. 

We say that $V$ has \emph{Hodge numbers} $(a_0,\ldots,a_w)$ as an $E$-Hodge structure if, for each embedding $\sigma\in \Sigma(E)$, the summand $V^{j,w-j}(\sigma)$ has complex dimension $a_j$ for all $j$. Note that if $V$ is an $E$-Hodge structure with Hodge numbers $(a_0,\ldots,a_w)$ and $[E:\mathbb{Q}]=r$, then $V$ is a $\mathbb{Q}$-Hodge structure with Hodge numbers $(ra_0,\ldots,ra_w)$.

\begin{lem}\label{sunitary}
Let $V$ be a polarizable $\mathbb{Q}$-Hodge structure of weight $w\ge1$ with Hodge numbers $(n,0,\ldots, 0,n)$. Suppose there exists a CM field $E$ embedding into the endomorphism algebra $L$ of $V$. Writing $[E:\mathbb{Q}]=r$ and $2n=lr$, let $J$ be the maximal totally real subfield of $E$ and let $C\cong M_{l}(E^{\mathrm{op}})$ be the centralizer of $E$ in $\mathrm{End}_\mathbb{Q}(V)$. Then we have the inclusion \[Hg(V)\subseteq R_{J/\mathbb{Q}}SU(C,^-)\]
if and only if $V$ is an $E$-Hodge structure with Hodge numbers $(\frac{n}{r},0,\ldots,0,\frac{n}{r})$.
\end{lem}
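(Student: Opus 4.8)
The statement is a biconditional relating a containment of the Hodge group in the $J$-form of the special unitary group $R_{J/\mathbb{Q}}SU(C,{}^-)$ to a numerical condition on the Hodge numbers of $V$ viewed as an $E$-Hodge structure. The plan is to reformulate both sides in terms of the cocharacter $\mu$ and the determinant map, exploiting the same circle of ideas used in the proof of Proposition \ref{imaginary}. First I would unwind the definitions: since $E$ is CM with maximal totally real subfield $J$, the group $R_{J/\mathbb{Q}}U(C,{}^-)$ is a $\mathbb{Q}$-form of a product of general linear groups, and $R_{J/\mathbb{Q}}SU(C,{}^-)$ is the kernel of the reduced-norm determinant map $\delta\colon R_{J/\mathbb{Q}}U(C,{}^-)\to R_{J/\mathbb{Q}}\mathbb{G}_m$. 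By Remark \ref{lefrem} and Table \ref{exphg} applied to the subfield $E$, we already know $Hg(V)$ lands in $R_{J/\mathbb{Q}}U(C,{}^-)$; the containment in $SU$ is therefore equivalent to the assertion that the composite $\delta\circ h|_{U_1}$ is trivial, or on the level of cocharacters, that $\delta\circ\mu$ is trivial after the appropriate normalization.

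\textbf{Translating to cocharacters.}
The key computation mirrors the end of Proposition \ref{imaginary}. Over $\mathbb{C}$, the character group of $R_{J/\mathbb{Q}}\mathbb{G}_m$ is free on the embeddings $\Sigma(E)$ (grouped in conjugate pairs by the $J$-structure), and I would compute the composite $\delta\circ\mu\colon \mathbb{G}_m\to (R_{J/\mathbb{Q}}\mathbb{G}_m)_\mathbb{C}$ explicitly. For each embedding $\sigma\in\Sigma(E)$, the $\sigma$-component of $\delta\circ\mu$ sends $z\mapsto z^{-n_\sigma}$, where $n_\sigma=\dim_\mathbb{C} V^{w,0}(\sigma)$ is the multiplicity with which $\mu$ acts by the nontrivial weight on the $\sigma$-eigenspace. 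Thus $Hg(V)\subseteq R_{J/\mathbb{Q}}SU(C,{}^-)$ exactly when, on each factor, the determinant cocharacter becomes trivial once we quotient out by the central homothety torus $\mathbb{G}_m\hookrightarrow M$. Since the homothety contributes the constant weight $-w\cdot\tfrac{\dim}{2}$ uniformly, the triviality of $\delta\circ\mu$ modulo homotheties amounts to the condition that $n_\sigma$ is independent of $\sigma$. Given the constraint $n_\sigma+n_{\overline\sigma}=l=\tfrac{2n}{r}$ coming from the pairing of conjugate embeddings, the common value must be $n_\sigma=\tfrac{l}{2}=\tfrac{n}{r}$, which is precisely the statement that $V$ has $E$-Hodge numbers $(\tfrac{n}{r},0,\ldots,0,\tfrac{n}{r})$.

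\textbf{Assembling the biconditional.}
For the forward direction, I would argue that if $Hg(V)\subseteq R_{J/\mathbb{Q}}SU(C,{}^-)$, then $\delta$ restricted to $Hg(V)$ is trivial; since $MT(V)=\mathbb{G}_m\cdot Hg(V)$ and the image of $\delta\circ\mu$ is governed by the weights of $\mu$, triviality forces all $n_\sigma$ equal, giving the Hodge-number condition. For the converse, if all $n_\sigma=\tfrac{n}{r}$, then $\delta\circ\mu$ coincides (up to the homothety factor) with the diagonal cocharacter, so that the image of $Hg(V)=\overline{h(U_1)}$ under $\delta$ lies in the kernel of the reduced norm; hence $Hg(V)\subseteq R_{J/\mathbb{Q}}SU(C,{}^-)$. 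The honest book-keeping is to check that the reduced-norm determinant of $U(C,{}^-)$, rather than a naive determinant, is what detects the $SU$ condition, and to confirm that the conjugate-pair constraint $n_\sigma+n_{\overline\sigma}=l$ is forced by the polarization compatibility of the $E$-structure (which comes from $V$ being a polarized $E$-Hodge structure, valid since $E$ is CM).

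\textbf{Main obstacle.}
I expect the principal difficulty to be the careful identification of the reduced-norm map on the $\mathbb{Q}$-form $R_{J/\mathbb{Q}}U(C,{}^-)$ with the product of the per-embedding determinants over $\mathbb{C}$, together with correctly tracking how the central homothety torus sits inside $M$ so that ``$\delta\circ\mu$ is trivial'' is interpreted modulo homotheties rather than absolutely. A secondary subtlety is ensuring that the equivalence is genuinely an iff and not merely an implication: the converse requires that once the $E$-Hodge numbers are balanced, \emph{no} further obstruction prevents the Hodge cocharacter from factoring through $SU$, which follows because the only source of a nontrivial reduced norm is the imbalance of the $n_\sigma$. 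Everything else is a direct unwinding of the definition of $SU(C,{}^-)$ as the kernel of the reduced norm, as set up in Section \ref{lefschetz}.
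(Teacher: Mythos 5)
Your proposal is correct and follows essentially the same route as the paper: both reduce the containment $Hg(V)\subseteq R_{J/\mathbb{Q}}SU(C,^-)$ to the vanishing of the $E$-linear determinant (reduced norm) on the Hodge group, and both then compute that determinant embedding-by-embedding to arrive at the condition $n_\sigma=\tfrac{l}{2}$ for every $\sigma\in\Sigma(E)$. The only difference is packaging: the paper realizes the determinant as the $Hg(V)$-action on the line $\bigwedge^{l}_E V$ and asks when that line consists of Hodge classes (i.e.\ is of type $(\tfrac{l}{2},\tfrac{l}{2})$), which sidesteps the homothety normalization you must track when working with $\mu$ rather than with $h|_{U_1}$, and the conjugate-pair constraint $n_\sigma+n_{\overline{\sigma}}=l$ you invoke comes simply from $\overline{V^{w,0}(\sigma)}=V^{0,w}(\overline{\sigma})$ rather than from the polarization.
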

\begin{proof}
We know $V\otimes _\mathbb{Q} \mathbb{C}$ has rank $l$ as a free $E\otimes _\mathbb{Q} \mathbb{C}$-module, so consider the exterior product 
$\bigwedge^{l}_E V.$
An element $\alpha$ in $Hg(V)$ acts on $\bigwedge^{l}_E V$ as multiplication by $\mathrm{Norm}_{E}(\alpha)$. Since $E$ is contained in $L$, we already know that 
$Hg(V)$ is contained in $R_{J/\mathbb{Q}}U(C,^-)$. Hence we have $Hg(V)\subseteq R_{J/\mathbb{Q}}SU(C,^-)$
if and only if $\bigwedge^{l}_E V$ is invariant under the action of $Hg(V)$, meaning if and only if $\bigwedge^{l}_E V$ is purely of type $(\frac{l}{2},\frac{l}{2})$ as a sub-Hodge structure of $\bigwedge^{l}_\mathbb{Q} V$.

For each $\sigma\in \Sigma(E)$, let $e_\sigma=\dim _\mathbb{C}V^{w,0}(\sigma)$ in the decomposition 
\[V\otimes _{\mathbb{Q}}\mathbb{C}=\bigoplus_{\sigma\in \Sigma(E)} \left(V ^{w,0}(\sigma)\oplus V^{0,w}(\sigma)\right).\]

Then we have, 
\begin{equation*}
\begin{split}
\left(\bigwedge^{l}_E V\right)\otimes_{\mathbb{Q}} \mathbb{C} & =\bigwedge^{l}_{E\otimes _{\mathbb{Q}}\mathbb{C}} (V\otimes _{\mathbb{Q}}\mathbb{C})\\
&=\bigoplus_{\sigma\in \Sigma(E)}\bigwedge^{l}\left(V ^{w,0}(\sigma)\oplus V^{0,w}(\sigma)\right)\\
&=\bigoplus_{\sigma\in \Sigma(E)}\bigoplus_{e_\sigma}\left(\bigwedge^{e_\sigma}V ^{w,0}(\sigma)\otimes \bigwedge^{l-e_\sigma}V^{0,w}(\sigma)\right).
\end{split}
\end{equation*}

Thus $\bigwedge^{l}_E V$ is purely of type $(\frac{l}{2},\frac{l}{2})$ if and only if $e_\sigma=\frac{l}{2}=l-e_\sigma$ for all embeddings $\sigma\in \Sigma(E)$ of $E$ into $\mathbb{C}$. But the property of having $e_\sigma=\frac{l}{2}=l-e_\sigma$ for all embeddings $\sigma\in \Sigma(E)$ is exactly what it means for $V$ to be an $E$-Hodge structure with Hodge numbers $(\frac{n}{r},0,\ldots,0,\frac{n}{r})$, so this finishes the proof.

\end{proof}


\begin{prop}\label{Ealg}
Let $V$ be a simple polarizable $E$-Hodge structure with Hodge numbers $(p,0,\ldots,0,p)$, where $p$ is prime and $E$ is a CM field of degree $r$ over $\mathbb{Q}$. Suppose the endomorphism algebra of $V$ as a $\mathbb{Q}$-Hodge structure is $E$. Let $C\cong M_{2p}(E^{\mathrm{op}})$ be the centralizer of $E$ in $\mathrm{End}_\mathbb{Q}(V)$ and let $J$ be the maximal totally real subfield of $E$. Then $Hg(V)=R_{J/\mathbb{Q}}SU(C,^-).$
\end{prop}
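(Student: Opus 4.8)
The plan is to prove the reverse of the inclusion $Hg(V)\subseteq R_{J/\mathbb{Q}}SU(C,^{-})$. First I would record this inclusion: by Remark \ref{lefrem} and Table \ref{exphg} (Type IV) we have $Hg(V)\subseteq R_{J/\mathbb{Q}}U(C,^{-})$, and since $V$ is by hypothesis an $E$-Hodge structure with Hodge numbers $(p,0,\ldots,0,p)=(\tfrac{n}{r},0,\ldots,0,\tfrac{n}{r})$, where the underlying $\mathbb{Q}$-Hodge number is $n=rp$, Lemma \ref{sunitary} refines this to $Hg(V)\subseteq R_{J/\mathbb{Q}}SU(C,^{-})$, which is a $\mathbb{Q}$-form of a product of $g=[J:\mathbb{Q}]$ copies of $SL(2p)$. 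Since $L=E=F_0$ we have $q=1$, so (\ref{DL1}) reads $V_\mathbb{C}=\bigoplus_{\sigma\in\Sigma(J)}(W_\sigma\oplus W_\sigma^*)$ with each $W_\sigma$ of dimension $2p$ and irreducible by Lemma \ref{irreducible}. Because $[\mathrm{End}(V_\mathbb{C})]^{H_\mathbb{C}}=E\otimes_\mathbb{Q}\mathbb{C}$ has dimension $r=2g$, the $2g$ irreducible summands $W_\sigma,W_\sigma^*$ must be pairwise non-isomorphic; in particular each $W_\sigma$ is non-self-dual.

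Next I would pin down $W_\sigma$ as an $H_\mathbb{C}$-representation. Writing $W_\sigma=\chi\boxtimes\rho_1\boxtimes\cdots\boxtimes\rho_s$ as in (\ref{decompW}), Lemma \ref{tablelem} says each nontrivial $\rho_i$ is minuscule of classical type. Since $\dim W_\sigma=2p$ is twice a prime, the nontrivial factors have dimensions multiplying to $2p$, so either there is a single nontrivial factor of dimension $2p$, or there are two, of dimensions $2$ and $p$, the dimension-$2$ one being the standard representation of an $\mathfrak{sl}_2$. I would rule out this two-factor case via Lemma \ref{sl2lem}: its two alternatives are $\alpha=\beta=0$ or $\lambda_1=\cdots=\lambda_r=0$, and since $\alpha,\beta$ are the weights of a cocharacter of $SL_2$ on its standard representation they satisfy $\beta=-\alpha$, so the second alternative would force the two Hodge weights $\{0,-w\}$ of $\phi\circ\gamma$ on $W_\sigma$ to be symmetric about $0$, impossible for $w\ge 1$. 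Thus $\alpha=\beta=0$, i.e. the cocharacter $\gamma$ is trivial on this $\mathfrak{sl}_2$. As $Hg(V)$ is defined over $\mathbb{Q}$ and $\mathrm{Gal}(\overline{\mathbb{Q}}/\mathbb{Q})$ permutes the embeddings of $E$, hence the $W_\sigma$, transitively, the same holds for every such $\mathfrak{sl}_2$, so the $\mathrm{Aut}(\mathbb{C}/\mathbb{Q})$- and $MT(V)(\mathbb{C})$-conjugates of $\gamma$, which generate $MT(V)_\mathbb{C}$, all avoid these $\mathfrak{sl}_2$ factors, contradicting their presence exactly as in Corollary \ref{sl2cor}. (When $p=2$ this case is instead excluded directly, since there $\mathfrak{sl}_2\boxtimes\mathfrak{sl}_2$ is the self-dual standard representation of $\mathfrak{so}_4$.)

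The heart of the argument is the single-factor case, in which $W_\sigma$ is an irreducible minuscule representation of dimension $2p$ of one simple group. Self-duality read off from Table \ref{selfdual} immediately eliminates the standard representations of $\mathfrak{sp}_{2p}$ and $\mathfrak{so}_{2p}$ and the spin representations, all of which are self-dual. The delicate point is eliminating the exterior-power representations $\bigwedge^{j}(\mathrm{St})$ of $\mathfrak{sl}_N$ with $\binom{N}{j}=2p$ and $2\le j\le N-2$, since these can be non-self-dual (for instance $\bigwedge^2$ of $\mathfrak{sl}_5$ when $p=5$), so dimension and autoduality alone do not suffice; this is the step I expect to be the main obstacle. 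Here I would use the Hodge-theoretic constraint that $\phi\circ\gamma$ acts on $W_\sigma=V^{w,0}(\sigma)\oplus V^{0,w}(\sigma)$ with exactly the two weights $-w$ and $0$, each of multiplicity exactly $p$. If $\gamma$ acts on the standard $\mathbb{C}^N$ with weights $a_1,\ldots,a_N$, then $\bigwedge^{j}$ carries the $j$-fold sums as weights; demanding exactly two distinct values forces the $a_i$ to take at most two values, and a short combinatorial analysis shows the two resulting multiplicities can both equal $p=\tfrac12\binom{N}{j}$ only when $N=4,\ j=2$, which is the self-dual case $\mathfrak{so}_6$ already excluded. Hence the only survivor is $N=2p$, $j\in\{1,2p-1\}$, namely the standard representation of $\mathfrak{sl}_{2p}$ or its dual.

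Having shown that the image of $H_\mathbb{C}$ in each $SL(W_\sigma)\cong SL(2p,\mathbb{C})$ has Lie algebra $\mathfrak{sl}_{2p}$ and hence is all of $SL(2p,\mathbb{C})$, each such image is semisimple, as its connected center acts by scalars on the irreducible $W_\sigma$ and so trivially; consequently $Hg(V)$ itself is semisimple. Finally, each $W_\sigma$ now has a single nontrivial factor of type $A_{2p-1}$, which is never $D_4$, and the summands $W_\sigma$ are pairwise non-isomorphic, so no isomorphism of factors, standard or dual, can identify two of them. Lemma \ref{goursat} then yields $\mathrm{Lie}(Hg(V))_\mathbb{C}=\prod_{\sigma\in\Sigma(J)}\mathfrak{sl}_{2p}$. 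Since $Hg(V)$ and $R_{J/\mathbb{Q}}SU(C,^{-})$ are connected $\mathbb{Q}$-subgroups of $GL(V)$ with the same complex points, this gives the desired equality $Hg(V)=R_{J/\mathbb{Q}}SU(C,^{-})$.
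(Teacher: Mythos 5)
Your proof is correct, and it follows the same skeleton as the paper's: the inclusion $Hg(V)\subseteq R_{J/\mathbb{Q}}SU(C,^{-})$ from Lemma \ref{sunitary}, the identification of the irreducible non-self-dual $2p$-dimensional summands $W_\sigma$, the classification of the possible minuscule representations via Lemma \ref{tablelem} and Table \ref{selfdual} (yielding exactly the standard representation of $\mathfrak{sl}_{2p}$, the product $\mathfrak{sl}_2\times\mathfrak{sl}_p$, and the exterior powers $\bigwedge^{j}(\mathrm{St})$ with $\binom{N}{j}=2p$), and Lemma \ref{goursat} to conclude. Three steps are executed differently, all correctly. First, the paper establishes semisimplicity of $Hg(V)$ up front by noting its center lies in $E\cap R_{J/\mathbb{Q}}SU(C,^{-})$ and so consists of roots of unity; you instead extract it at the end from the fact that the central character $\chi$ lands in the scalars of $\mathfrak{sl}(W_\sigma)$ and hence vanishes. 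Second, for the case $\mathfrak{sl}_2\times\mathfrak{sl}_p$ you and the paper both invoke Lemma \ref{sl2lem} but dispose of its two branches in the opposite order: the paper kills the branch $\alpha=\beta=0$ by the parity of $p$ and argues group-theoretically in the other branch, while you kill the branch with trivial complementary weights using $\alpha+\beta=0$ and then argue group-theoretically when $\alpha=\beta=0$; for odd $p$ both branches are in fact immediately contradictory, so either order works, and your parenthetical disposal of $p=2$ by self-duality matches the paper's restriction of this case to $p\neq 2$. Third, and most substantively, where the paper eliminates the one non-self-dual exceptional candidate, $\bigwedge^2(\mathrm{St})$ of $\mathfrak{sl}_5$ for $p=5$, by an ad hoc pigeonhole argument on the weights $\lambda_i+\lambda_j$, you give a uniform argument for all $\bigwedge^{j}(\mathrm{St})$: requiring the $j$-fold sums to take exactly two values forces the weights on the standard representation to take at most two values, and the two multiplicities can then both equal $\tfrac12\binom{N}{j}$ only in the self-dual case $(N,j)=(4,2)$. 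Your version is more systematic, simultaneously re-deriving which $(N,j)$ can occur at all, and would carry over more readily to Hodge numbers beyond twice a prime; the paper's is shorter because only $(5,2)$ actually needs to be excluded.
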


\begin{proof}
Let $H=Hg(V)$ be the Hodge group of $V$.  Since $V$ is an $E$-Hodge structure, by Lemma \ref{sunitary} we know
$H\subseteq R_{J/\mathbb{Q}}SU(C,^{-}),$ where $SU(C,^{-})$ is a $J$-form of $SL(2p)$. We just need to prove equality.

Since $E=[\mathrm{End}_\mathbb{Q}(V)]^H$, the center of $H$ is contained in the center of $E$, which in this case is all of $E$.  Hence the center of $H$ is contained in  $R_{J/\mathbb{Q}}SU(C,^-)\cap E$. So elements of the center of $H$ must be $2n$-th roots of unity in $E$, of which there are only finitely many. Thus, the center of $H$ is finite, which, since $H$ is also reductive, implies that $H$ is semisimple.

In the notation of Section \ref{section5}, let 
\[W_\sigma=\rho_1\boxtimes\cdots \boxtimes \rho_s\] be an irreducible representation of $\mathrm{Lie}(H)_\mathbb{C}$ in the decomposition 
\[V_\mathbb{C}=\bigoplus_{\sigma\in \Sigma(J)}(W_\sigma\oplus W_\sigma^*).\]
Namely the representation $W_\sigma$ is $2p$-dimensional and not self-dual. So, we may assume without loss of generality than none of the $\rho_j$ are self-dual either. Lemma \ref{tablelem} yields that the highest weight of each of the $\rho_j$ is minuscule and each $\mathfrak{g}_j$ is of classical type. Hence consulting Table \ref{selfdual} yields the following possibilities for the Lie algebra $\mathfrak{g}_1\times \cdots \times \mathfrak{g}_s$ acting on $W_i$:
\begin{enumerate}
\item The Lie algebra $\mathfrak{sl}_{2p}$ acting by the standard representation
\item When $p=5$, the Lie algebra $\mathfrak{sl}_5$ acting by $\bigwedge^2(\mathrm{Standard})$
\item When $p\ne 2$, the Lie algebra $\mathfrak{sl}_2\times \mathfrak{sl}_p$ acting by the product of the standard representations.
\end{enumerate}

Now let $M_\mathbb{C}$ be the complexified Mumford-Tate group of $V$ and consider Case (2). Write the weights in $W_i$ of the homomorphism $\gamma\colon \mathbb{G}_{m,\mathbb{C}}\rightarrow M_\mathbb{C}$ as $z\mapsto (z^{-c}, z^{-l_1}),$ according to the decomposition $W=\chi\boxtimes\rho_1$, where $\chi$ is a character of $\mathfrak{c}$ and $\rho_1$ is the representation of $\mathfrak{sl}_5$ acting by $\bigwedge^2(\mathrm{Standard})$. 

Let $\lambda_1,\ldots,\lambda_5$ be the $5$ possible values of $l_1$. Then we know, half of the elements in the set $S:=\{\lambda_i+\lambda_j\mid 1\le i<j\le 5\}$ are equal to $0$ and half are equal to $w-c$. Using a pigeon-hole argument, there exists $i$  and  $j,k\ne i$ such that $\lambda_i=-\lambda_j=-\lambda_k$. But since either $\lambda_j=-\lambda_k$ or $\lambda_j=w-c-\lambda_k$, we must have either $\lambda_i=\lambda_j=\lambda_k=0$ or $\lambda_i=-\frac{w-c}{2}$ and $\lambda_j=\lambda_k=\frac{w-c}{2}$. In either case though,  there is no way to choose the two remaining values of $l_1$ so as to satisfy the requirement on the set $S$. So the Case (2) in the above list not possible. 

Similarly, for Case (3) on the list, write the weights in $W_i$ of the homomorphism $\gamma\colon \mathbb{G}_{m,\mathbb{C}}\rightarrow M_\mathbb{C}$ as $z\mapsto (z^{-c}, z^{-l_1}, z^{-l_2})$ according to the decomposition $W=\chi\boxtimes\rho_1\boxtimes \rho_2$, where $\rho_1$ is the standard representation of $\mathfrak{sl}_2$ and $\rho_2$ is the standard representation of $\mathfrak{sl}_p$. Denote the two possible values of $l_1$ by $\alpha$ and $\beta$ and denote the possible values of $l_2$ by $\lambda_1,\ldots, \lambda_p$. Then by Lemma \ref{sl2lem}, either $\alpha=\beta=0$ or $\lambda_1=\cdots=\lambda_p=0$. But if $\alpha=\beta=0$, then half of the $\lambda_k$ for must be equal to $w-c$ and half of the $\lambda_k$ must be equal to $0$. Since $p$ is odd, this is impossible. Therefore, we must have $\lambda_1=\cdots=\lambda_p=0$ and hence $\alpha=0$ and $\beta=w-c$. But then the homomorphism $\gamma$ factors through $Z(M)_\mathbb{C}\cdot SL(2,\mathbb{C})^{r/2}$. So then $H_\mathbb{C}\subset SL(2,\mathbb{C})^{r/2}$. But since $\mathfrak{sl}_2\times \mathfrak{sl}_p$ acts on the representation $W$ by the product of the standard representations, the group $H_\mathbb{C}$ must surject onto $SL(p,\mathbb{C})$, which contradicts the statement that $H_\mathbb{C}$ is contained in $SL(2,\mathbb{C})^{r/2}$. Hence, Case (3) is not possible. 

Hence the representation $W_\sigma$ must be the standard representation of $\mathfrak{sl}_{2p}$.  Applying Lemma  \ref{goursat}, then gives $H_\mathbb{C}= \prod_{\sigma\in \Sigma(J)}SL(2p)$, which completes the proof.

\end{proof}


\begin{prop}\label{Ecoprime}
Let $V$ be a simple polarizable $\mathbb{Q}$-Hodge structure of weight $w\ge1$ which is an $E$-Hodge structure with Hodge numbers $(n,0,\ldots,0,n)$ for some CM field $E$. Writing $[E:\mathbb{Q}]=r$, suppose that the endomorphism algebra $L$ of $V$ is a CM field such that $[L:\mathbb{Q}]=2r$. Moreover suppose that $n_{\sigma}=\dim V^{w,0}(\sigma)$ and $n_{\overline{\sigma}}=\dim V^{w,0}(\overline{\sigma})$ are coprime for all embeddings $\sigma \in \Sigma(L)$. Then, letting $B\cong M_n(L^{\mathrm{op}})$ denote the centralizer of $L$ in $\mathrm{End}_\mathbb{Q}(V)$ and $F$ the maximal totally real subfield of $L$, we have
\[Hg(V)=R_{F/\mathbb{Q}}SU(B,^-).\]
\end{prop}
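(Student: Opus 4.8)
The plan is to combine the Lefschetz upper bound with Serre's surjectivity theorem, exactly as in Proposition \ref{imaginary}, and the constraint coming from the $E$-Hodge structure recorded in Lemma \ref{sunitary}. Write $H=Hg(V)$ and $M=MT(V)$. From Remark \ref{lefrem} and Table \ref{exphg} we have $H\subseteq R_{F/\mathbb{Q}}U(B,^{-})$, so the task is to cut this group down to $R_{F/\mathbb{Q}}SU(B,^{-})$. Since $V$ is an $E$-Hodge structure with balanced Hodge numbers $(n,0,\ldots,0,n)$, Lemma \ref{sunitary} applies with centralizer $C\cong M_{2n}(E^{\mathrm{op}})$ and gives the further containment $H\subseteq R_{J/\mathbb{Q}}SU(C,^{-})$, where $J$ is the maximal totally real subfield of $E$. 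Using the compatibility $\mathrm{Nrd}_{C/E}=\mathrm{Norm}_{L/E}\circ\mathrm{Nrd}_{B/L}$ of reduced norms for the tower $E\subset L$, these two containments together confine the reduced-norm image of $H$ to the norm-one subtorus $\ker(\mathrm{Norm}_{L/E})$ inside the determinant torus of $R_{F/\mathbb{Q}}U(B,^{-})$.

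Next I would show that the semisimple part of $H$ is as large as possible. As in Section \ref{section5}, the splitting $L\otimes_\mathbb{Q}\mathbb{C}\cong\prod_{\sigma\in\Sigma(L)}\mathbb{C}$ gives $V_\mathbb{C}=\bigoplus_{\sigma\in\Sigma(L)}W_\sigma$ with each $W_\sigma$ an $n$-dimensional irreducible $M_\mathbb{C}$-representation by Lemma \ref{irreducible}, and within $W_\sigma$ the two $h$-weight spaces have dimensions $n_\sigma$ and $n_{\overline\sigma}$. For fixed $\sigma$, the image $N_\sigma$ of $M_\mathbb{C}$ in $\mathrm{GL}(W_\sigma)$ is connected and reductive, satisfies $\mathrm{End}_{N_\sigma}W_\sigma=\mathbb{C}$ because $L=\mathrm{End}_{\mathbb{Q}\text{-}\mathrm{HS}}(V)$, and contains the one-parameter subgroup acting as a homothety on one weight space and as the identity on the other. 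Since $n_\sigma$ and $n_{\overline\sigma}$ are coprime, Serre's Proposition 5 in \cite{serre} forces $N_\sigma=\mathrm{GL}(W_\sigma)$, just as in Proposition \ref{imaginary}; hence the derived group of $M$, equivalently of $H$, surjects onto $SL(W_\sigma)$ for every $\sigma$.

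The representations $W_\sigma$ are pairwise non-isomorphic, being distinguished by the action of $L$, so a Goursat argument of the type used in Lemma \ref{goursat} (the simple factors here are of type $A_{n-1}$, hence never $D_4$, so the outer automorphisms cause no trouble) upgrades these surjections to the equality $\mathrm{Lie}(H^{\mathrm{der}})_\mathbb{C}=\prod_{\sigma\in\Sigma(F)}\mathfrak{sl}(W_\sigma)$, which is precisely $\mathrm{Lie}\bigl(R_{F/\mathbb{Q}}SU(B,^{-})\bigr)_\mathbb{C}$. It then remains to identify the central torus of $H$. As in Proposition \ref{imaginary} I would analyze the determinant map $\delta\colon M\to R_{L/\mathbb{Q}}\mathbb{G}_m$ and its composition with $\mu$, which is governed by the integers $n_\sigma$; the coprimality hypothesis guarantees that $\delta\circ\mu$ is nonconstant, while the constraint $H\subseteq R_{J/\mathbb{Q}}SU(C,^{-})$ from the first paragraph confines $\delta(H)$ to $\ker(\mathrm{Norm}_{L/E})$. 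Matching the torus that survives against the full semisimple part computed above yields $H=R_{F/\mathbb{Q}}SU(B,^{-})$.

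The main obstacle is precisely this last reconciliation of the central torus. The coprimality of $n_\sigma$ and $n_{\overline\sigma}$ pushes the Hodge group to be large (Serre produces the full $\mathrm{GL}(W_\sigma)$ on each factor, and so a nontrivial central torus a priori survives), whereas the $E$-Hodge structure hypothesis pushes it to be small (containment in the special unitary group $R_{J/\mathbb{Q}}SU(C,^{-})$). The delicate point is to verify, via the determinant/cocharacter computation of Proposition \ref{imaginary} applied to the mixed tower $E\subset L$, that the torus permitted by the $E$-structure is exactly the central torus determined by the $n_\sigma$, so that the two opposing constraints meet precisely at the stated group $R_{F/\mathbb{Q}}SU(B,^{-})$.
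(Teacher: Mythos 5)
Your overall strategy coincides with the paper's: the two upper bounds $H\subseteq R_{F/\mathbb{Q}}U(B,^{-})$ and $H\subseteq R_{J/\mathbb{Q}}SU(C,^{-})$ coming from Remark \ref{lefrem} and Lemma \ref{sunitary}, Serre's proposition applied to each $W_\sigma$ via the coprimality of $n_\sigma$ and $n_{\overline{\sigma}}$, and a Goursat-type argument to pin down the derived group are exactly the ingredients the paper uses. The genuine gap is the step you yourself flag as ``the main obstacle'': you never show that the connected center of $H$ is trivial, and the route you sketch for doing so cannot work as stated. The two determinant constraints you assemble confine $\mathrm{Nrd}_{B/L}(Z(H)^{0})$ only to $U_L\cap\ker(\mathrm{Norm}_{L/E})$, and this intersection is a \emph{positive-dimensional} torus: if $\tau$ generates $\mathrm{Gal}(L/E)$ and $c$ denotes complex conjugation, the conditions $x\overline{x}=1$ and $xx^{\tau}=1$ together say only that $x$ lies in the CM subfield $E'=L^{\langle\tau c\rangle}$ and satisfies $x\overline{x}=1$ there, so the intersection contains the torus $U_{E'}$ of dimension $r/2\ge 1$. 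Hence ``matching the surviving torus'' by determinants alone does not force $Z(H)^{0}=1$. Worse, the cocharacter computation of Proposition \ref{imaginary} that you propose to import cuts the wrong way: there, coprimality of $n_\sigma$ and $n_{\overline{\sigma}}$ is used to show that the image of $\delta\circ\mu$ generates a torus strictly larger than the homotheties, i.e.\ to make the center \emph{large}, and the same computation here shows that $\delta(M)$ has dimension at least $2$ whenever the Galois orbit of the tuple $(n_\sigma)_\sigma$ is not contained in the diagonal, which would leave $Z(H)^{0}$ at least one-dimensional rather than trivial.

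The paper closes this step by a different mechanism: it notes that the center of $H$ lies in $R_{F/\mathbb{Q}}U(B,^{-})\cap L$, i.e.\ consists of scalars from $L$ itself, and argues that the two norm conditions ($\mathrm{Norm}_{L/F}=1$ from the unitary containment, and $\mathrm{Norm}_{L/E}$ an $n$-th root of unity from $H\subseteq R_{J/\mathbb{Q}}SU(C,^{-})$) leave only finitely many such elements; the center is therefore finite, $H$ is semisimple, and the sandwich $R_{F/\mathbb{Q}}SU(B,^{-})\subseteq H\subseteq R_{F/\mathbb{Q}}U(B,^{-})$ forces equality. This finiteness argument is the entire content of the proposition beyond Proposition \ref{imaginary}, Lemma \ref{sunitary}, and Lemma \ref{goursat}, and it is not a formality that can be waved through by comparing determinant tori; without it (or a substitute for it) your proof does not reach the stated conclusion.
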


\begin{proof}
Let $M=MT(V)$ be the Mumford-Tate group of $V$ and, as in the proof of Proposition \ref{imaginary}, consider the map
\[\overline{\rho}: M_\mathbb{C}\rightarrow \mathrm{GL}(W_\sigma)\]
induced by the action of $M_\mathbb{C}$ on an $n$-dimensional summand $W_\sigma$ in the decomposition $V_\mathbb{C}=\bigoplus_{\sigma \in \Sigma(F)}W_\sigma \oplus W_\sigma^*$.  Since $L\otimes _\mathbb{Q}\mathbb{C}\cong \prod_{\sigma\in \Sigma(F)} (\mathbb{C} \oplus \mathbb{C})$, if $N$ is the image of $\overline{\rho}$, we have $\mathrm{End}_N(W_\sigma)=\mathbb{C}$. So by an argument identical to that given in the proof of Proposition \ref{typeiv}, the map $\overline{\rho}$ is surjective. Hence the commutator subgroup of $M_\mathbb{C}$ surjects onto $\mathrm{SL}(W_\sigma)$.

Now, since the Hodge group $Hg(V)$ is contained in the Lefschetz group we know
\begin{equation}\label{suscontain}R_{F/\mathbb{Q}}SU(B,^-)\subseteq H\subseteq R_{F/\mathbb{Q}}U(B,^-).\end{equation}
Note that $U(B,^-)$ is an $F$-form of $GL(n)$ and $SU(B,^-)$ is an $F$-form of $SL(n)$.  

But since $V$ is an $E$-Hodge structure with Hodge numbers $(n,0,\ldots,0,n)$, by Lemma \ref{sunitary} we also know
 \[H\subseteq R_{J/\mathbb{Q}}SU(C,^{-}),\]
 where $J$ is the subfield of $E$ fixed by complex conjugation and $C\cong M_{2n}(E^\mathrm{op})$ is the centralizer of $E$ in $\mathrm{End}_\mathbb{Q}(V)$. Note that the group $SU(C,^{-})$ is a $J$-form of $SL(2n)$.

We know $L=[\mathrm{End}_\mathbb{Q}(V)]^H$, hence the center of $H$ is contained in $L$. Hence the center of $H$ is contained in  $R_{F/\mathbb{Q}}U(B,^-)\cap L$. So if $\lambda_1$ and $\lambda_2$ are elements of the center of $H$, both must have norm $1$. Moreover, because $H$ is contained in $R_{J/\mathbb{Q}}SU(C,^{-})$, considering an embedding of $R_{F/\mathbb{Q}}U(B,^-)\cap L$ into $R_{J/\mathbb{Q}}SU(C,^{-})$, we must have $\lambda_1\lambda_2$ an $n$-th root of unity in $L$. There are only finitely many such $\lambda_1$ and $\lambda_2$ in $L$, so the center of $H$ is finite, which, since $H$ is reductive, implies $H$ is semisimple.  The inclusions in (\ref{suscontain}) thus yield $H=R_{F/\mathbb{Q}}SU(B,^-)$.
\end{proof}

\begin{prop}\label{index2}
Let $V$ be a simple polarizable $\mathbb{Q}$-Hodge structure of weight $w\ge1$ which is an $E$-Hodge structure with Hodge numbers $(n,0,\ldots,0,n)$ for some CM field $E$. Writing $[E:\mathbb{Q}]=r$, suppose that the endomorphism algebra $L$ of $V$ is a CM field such that $[L:\mathbb{Q}]=nr$. Then, letting $B\cong M_2(L^{\mathrm{op}})$ be the centralizer of $L$ in $\mathrm{End}_\mathbb{Q}(V)$ and $F$  the  maximal totally real subfield of $L$, we have
\[Hg(V)=R_{F/\mathbb{Q}}SU(B,^-).\]
\end{prop}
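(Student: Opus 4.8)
The plan is to follow the template of Propositions \ref{imaginary} and \ref{Ecoprime}, the key simplification being that here $\dim_L V = 2$, so in the decomposition $V_\mathbb{C} = \bigoplus_{\sigma \in \Sigma(F)}(W_\sigma \oplus W_\sigma^*)$ each summand $W_\sigma$ has dimension $mq = 2$. I would begin by bounding $H = Hg(V)$ from above. Remark \ref{lefrem} together with the Type IV row of Table \ref{exphg} gives $H \subseteq R_{F/\mathbb{Q}} U(B,^{-})$, which is an $F$-form of $GL(2)$, and since $V$ is an $E$-Hodge structure with Hodge numbers $(n, 0, \ldots, 0, n)$, Lemma \ref{sunitary} yields $H \subseteq R_{J/\mathbb{Q}} SU(C,^{-})$, where $C \cong M_{2n}(E^{\mathrm{op}})$ and $J$ is the maximal totally real subfield of $E$.

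Next I would produce the reverse inclusion on derived groups. By Lemma \ref{irreducible} each $W_\sigma$ is an irreducible $H_\mathbb{C}$-module; since it is $2$-dimensional, $H_\mathbb{C}$ cannot act on it through scalars, so $W_\sigma$ cannot be contained in $V^{w,0}$ nor in $V^{0,w}$ (otherwise $h|_{U_1}$, and hence its Zariski closure, would act on $W_\sigma$ by scalars). Thus $n_\sigma = n_{\overline\sigma} = 1$ for every $\sigma$, and in particular $n_\sigma$ and $n_{\overline\sigma}$ are coprime. Exactly as in the proof of Proposition \ref{imaginary}, I would then consider the map $\overline\rho \colon M_\mathbb{C} \to GL(W_\sigma)$ induced by the action of the Mumford-Tate group on $W_\sigma$; its image $N$ is connected and reductive, satisfies $\mathrm{End}_N(W_\sigma) = \mathbb{C}$ because $L \otimes_\mathbb{Q} \mathbb{C}$ splits completely, and contains the automorphisms that are homotheties on $V^{w,0}(\sigma)$ and the identity on $V^{0,w}(\sigma)$. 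Hence Serre's \cite[Proposition 5]{serre} applies and $\overline\rho$ is surjective, so the commutator subgroup of $M_\mathbb{C}$ surjects onto each $SL(W_\sigma)$; together with $L = [\mathrm{End}_\mathbb{Q}(V)]^H$ this gives $R_{F/\mathbb{Q}} SU(B,^{-}) \subseteq H \subseteq R_{F/\mathbb{Q}} U(B,^{-})$.

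Finally I would show that $H$ has finite center, which I expect to be the main obstacle. Since the center of $H$ lies in $L^\times$, the inclusion $H \subseteq R_{F/\mathbb{Q}} U(B,^{-})$ forces every central $\lambda$ to satisfy $\lambda\overline\lambda = 1$, while the inclusion $H \subseteq R_{J/\mathbb{Q}} SU(C,^{-})$ forces its reduced norm $\mathrm{Norm}_{L/E}(\lambda)^{\dim_L V} = \mathrm{Norm}_{L/E}(\lambda)^2$ to be trivial; combining these two norm conditions, as in Propositions \ref{Ecoprime} and \ref{Ealg}, I would argue that $\lambda$ has all of its archimedean absolute values equal to $1$ and is a root of unity, so that the center is finite. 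A cleaner variant specific to the present case is also available: because each $W_\sigma$ is $2$-dimensional of Hodge type $(1,1)$, the two weights of $\gamma = h\circ\mu$ on $W_\sigma$ are $\{-w, 0\}$, which forces the central character of $\gamma$ on $W_\sigma$ to be the same for every $\sigma$, namely $z \mapsto z^{-w/2}$; thus the central part of $\gamma$ is a multiple of the homothety cocharacter, so $\gamma$ and all of its $\mathrm{Aut}(\mathbb{C}/\mathbb{Q})$-conjugates project trivially to the center of $Hg(V)$, and since these conjugates generate $M_\mathbb{C}$ the group $Hg(V)$ is semisimple. In either case $H$ is reductive with finite center, hence semisimple, and with the containments above this yields $Hg(V) = R_{F/\mathbb{Q}} SU(B,^{-})$.
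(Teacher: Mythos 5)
There is a genuine gap, and it sits at the hinge of your argument: the claim that irreducibility of the $2$-dimensional module $W_\sigma$ forces $n_\sigma=n_{\overline\sigma}=1$. Your justification --- that if $W_\sigma\subseteq V^{w,0}$ then $h|_{U_1}$ ``and hence its Zariski closure'' acts on $W_\sigma$ by scalars --- confuses the $\mathbb{Q}$-Zariski closure with the $\mathbb{C}$-Zariski closure. The locus of elements acting by a scalar on $W_\sigma$ is an algebraic subgroup defined only over the field of definition of $W_\sigma$ (a nontrivial extension of $\mathbb{Q}$, since $W_\sigma$ is cut out by an embedding $\sigma$ of $L$), so it contains the $\mathbb{C}$-Zariski closure of $h(U_1)$ but need not contain $Hg(V)_\mathbb{C}$; this is exactly the phenomenon behind CM Hodge structures, where $h(U_1)$ acts by scalars on every $V^{w,0}(\sigma)$ yet the Hodge group is large. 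The conclusion is in fact false in nearby situations: in Proposition \ref{ivdeg4} one has a simple Hodge structure with a $2$-dimensional irreducible $W_{\sigma_1}$ and $(n_{\sigma_1},n_{\overline\sigma_1})=(2,0)$. The $E$-Hodge-structure hypothesis only constrains the sums $\sum_{\sigma\mid\nu}n_\sigma=n$, not the individual $n_\sigma$, so nothing rules out a mixture of $(2,0)$ and $(1,1)$ places here. Once some $(n_\sigma,n_{\overline\sigma})=(2,0)$ is allowed, Serre's \cite[Proposition 5]{serre} does not apply (the coprimality hypothesis fails and the distinguished homothety degenerates to a scalar), so your lower bound $R_{F/\mathbb{Q}}SU(B,^-)\subseteq H$ is not established; your ``cleaner variant'' of the semisimplicity argument collapses for the same reason, since at a $(2,0)$ place the central character of $\gamma$ on $W_\sigma$ would be $z\mapsto z^{-w}$ rather than $z\mapsto z^{-w/2}$.

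The paper avoids $n_\sigma$ entirely. Your upper bounds and your first semisimplicity variant (norm $1$ from $H\subseteq R_{F/\mathbb{Q}}U(B,^-)$, plus the determinant condition from $H\subseteq R_{J/\mathbb{Q}}SU(C,^-)$, forcing the center into a finite set of elements of $L$) are exactly the paper's, and they already give $H$ semisimple, hence $H\subseteq R_{F/\mathbb{Q}}SU(B,^-)$, an $F$-form of $SL(2)$. The missing lower bound then comes for free from the representation theory: by Lemma \ref{irreducible} each $W_\sigma$ is a $2$-dimensional irreducible module for $\mathrm{Lie}(H)_\mathbb{C}\subseteq\prod_{\sigma\in\Sigma(F)}\mathfrak{sl}_2$, hence is the standard representation of the $\sigma$-th factor and $\mathrm{Lie}(H)_\mathbb{C}$ surjects onto each factor; Lemma \ref{goursat}, using that $L$ is a field so no two $W_\sigma$ are isomorphic, rules out proper graphs and yields $\mathrm{Lie}(H)_\mathbb{C}=\prod_\sigma\mathfrak{sl}_2$. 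Replacing your Serre step by this Goursat step repairs the proof.
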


\begin{proof}
Let $H=Hg(V)$. The Lefschetz group of $V$ is $R_{F/\mathbb{Q}}U(B,^-)$, so we have:
\begin{equation}\label{ucontain}H\subseteq R_{F/\mathbb{Q}}U(B,^-),\end{equation}
where $U(B,^-)$ is an $F$-form of $GL(2)$. 

Since $V$ is an $E$-Hodge structure with Hodge numbers $(n,0,\ldots,0,n)$. Thus by Lemma \ref{sunitary} we have:
 $$H\subseteq R_{J/\mathbb{Q}}SU(C,^{-}),$$ where $J$ is the maximal totally real subfield of $E$ and $C\cong M_{2n}(E^\mathrm{op})$ is the centralizer of $E$ in $\mathrm{End}_\mathbb{Q}(V)$. Here, the group $SU(C,^{-})$ is a $J$-form of $SL(2n)$.

By the same argument as in the proof of Proposition \ref{Ecoprime}, the center of $H$ is contained in $R_{F/\mathbb{Q}}U(B,^-)\cap L$ and so all its elements have norm $1$ and since $H$ is contained in $R_{J/\mathbb{Q}}SU(C,^{-})$ any product of $n$ such elements is equal to $\pm1$. Since there are only finitely many elements of $L$ with this property, the center of $H$ is finite and so, since $H$ is reductive, we have that $H$ is semisimple. The inclusion of  (\ref{ucontain}) then yields
$$H\subseteq R_{F/\mathbb{Q}}SU(B,^-),$$ 
where $SU(B,^-)$ is an $F$-form of $SL(2)$.  Then in the decomposition $V_\mathbb{C}=\bigoplus_{\sigma\in \Sigma(F)}W_\sigma\oplus W_\sigma^*$ coming from the action of $L\otimes_\mathbb{Q}\mathbb{C}$, the $2$-dimensional irreducible representation $W_\sigma$ of $\mathrm{Lie}(H)_\mathbb{C}$ is just the standard representation of $\mathfrak{sl}_2$.  By Lemma \ref{goursat}, we get $\mathrm{Lie}(H)_\mathbb{C}=\prod_{\sigma\in \Sigma(F)}\mathfrak{sl}_2$, which finishes the proof. 
\end{proof}

Consider the torus given by
\[U_L=\ker( \mathrm{Norm}_{L}\colon R_{L/\mathbb{Q}}\mathbb{G}_m\rightarrow R_{F/\mathbb{Q}}\mathbb{G}_m).\]

Letting $J$ denote the maximal totally real subfield of $E$,  define the torus $SU_{L/E}$ to be the subtorus of $U_L$ given by
\[SU_{L/E}=\ker( \mathrm{Norm}_{L/E}\colon U_L\rightarrow R_{J/\mathbb{Q}}\mathbb{G}_m).\]
Observe that $SU_{L/E}$ has codimension $[J:\mathbb{Q}]$ in $U_L$.

\begin{prop}\label{contained}
Let $V$ be a simple polarizable $\mathbb{Q}$-Hodge structure of weight $w\ge1$ which is an $E$-Hodge structure with Hodge numbers $(n,0,\ldots,0,n)$ for some CM field $E$. Writing $[E:\mathbb{Q}]=r$, suppose that the endomorphism algebra $L$ of $V$ is a CM field and $[L:\mathbb{Q}]=2nr$. Then we have the inclusion $Hg(V)\subseteq SU_{L/E}.$
\end{prop}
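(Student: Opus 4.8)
The plan is to exhibit $SU_{L/E}$ as the intersection $U_L\cap R_{J/\mathbb{Q}}SU(C,^{-})$ and then to deduce both containments for $Hg(V)$ from results already in hand.

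First I would extract the structural consequences of the hypotheses. Since $V$ is an $E$-Hodge structure with Hodge numbers $(n,0,\ldots,0,n)$ and $[E:\mathbb{Q}]=r$, its underlying $\mathbb{Q}$-Hodge structure has Hodge numbers $(nr,0,\ldots,0,nr)$, so $\dim_\mathbb{Q}V=2nr=[L:\mathbb{Q}]$. As $V$ is simple and $L=\mathrm{End}_{\mathbb{Q}-\mathrm{HS}}(V)$ is a field, $V$ is one-dimensional over $L$, that is $V\cong L$. In particular the centralizer of $L$ in $\mathrm{End}_\mathbb{Q}(V)$ is $L$ itself (the case $m=q=1$ of Section \ref{endclass}), so the Type IV Lefschetz group in Table \ref{exphg} is $R_{F/\mathbb{Q}}U(L,^{-})=U_L$, and Remark \ref{lefrem} yields the first containment $Hg(V)\subseteq U_L$. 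Separately, because $V$ is an $E$-Hodge structure with Hodge numbers $(n,\ldots,n)$, Lemma \ref{sunitary}, applied with $l=2n$ and $C\cong M_{2n}(E^{\mathrm{op}})$ the centralizer of $E$ in $\mathrm{End}_\mathbb{Q}(V)$, gives the second containment $Hg(V)\subseteq R_{J/\mathbb{Q}}SU(C,^{-})$.

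The crux is then to identify $U_L\cap R_{J/\mathbb{Q}}SU(C,^{-})$ with $SU_{L/E}$. Viewing $V\cong L$ as a free $E$-module of rank $[L:E]=2n$, the centralizer $C=\mathrm{End}_E(V)\cong M_{2n}(E)$ has center $E$, and for $g\in L^*\subseteq C^*$ the reduced norm $\mathrm{Nrd}_{C/E}(g)$ is the determinant of multiplication-by-$g$ on $L$ over $E$, which is exactly $\mathrm{Norm}_{L/E}(g)$. Since the involution on $C$ restricts to complex conjugation on both $L$ and $E$, all coming from the polarization, every $g\in U_L$ automatically lies in $U(C,^{-})$, and the condition $g\in SU(C,^{-})$, namely $\mathrm{Nrd}_{C/E}(g)=1$, becomes $\mathrm{Norm}_{L/E}(g)=1$, which is precisely the defining condition of $SU_{L/E}$ inside $U_L$. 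Hence $U_L\cap R_{J/\mathbb{Q}}SU(C,^{-})=SU_{L/E}$, and combining this with the two containments above gives $Hg(V)\subseteq SU_{L/E}$. I expect the only delicate point to be this norm identification, together with verifying that the Rosati involution restricts compatibly to $E$, $L$, and $C$ so that the intersection of algebraic groups is the clean one claimed.
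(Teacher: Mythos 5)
Your proposal is correct and follows essentially the same route as the paper: the two containments $Hg(V)\subseteq U_L$ (from $\dim_L V=1$ and the Lefschetz group) and $Hg(V)\subseteq R_{J/\mathbb{Q}}SU(C,^{-})$ (from Lemma \ref{sunitary}), combined via the observation that the reduced norm $\mathrm{Norm}_{C/E}$ restricts on $L\subseteq C$ to the field norm $\mathrm{Norm}_{L/E}$. You spell out that norm identification slightly more explicitly than the paper does, but the argument is the same.
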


\begin{proof}
Since $\dim_LV=1$, the Lefschetz group of $V$ is just the torus $U_L$ and thus we know $Hg(V)\subseteq U_L$. Now, as in previous proofs, by Lemma \ref{sunitary}, we know $Hg(V)\subseteq R_{J/\mathbb{Q}}SU(C,^{-})$. Recall that 
$SU(C,^{-})$ is defined to be the kernel of the norm map $\mathrm{Norm}_{C/E}$ acting on $U(C,^{-})$, and so, in particular, given the inclusion $Hg(V)\subseteq U_L$, all the elements of $Hg(V)$ lie in the kernel of the norm map $\mathrm{Norm}_{L/E}$, which implies $Hg(V)\subseteq SU_{L/E}.$
\end{proof}

\begin{prop}\label{Etorus} Let $V$ be a simple polarizable $\mathbb{Q}$-Hodge structure of weight $w\ge1$  which is an $E$-Hodge structure with Hodge numbers $(p,0,\ldots,0,p)$, where $p$ is an odd prime, for some imaginary quadratic field $E$. Suppose that the endomorphism algebra $L$ of $V$ satisfies:
\begin{enumerate}
\item $L$ is of Type IV
\item $L/\mathbb{Q}$ is a Galois extension
\item $[L:\mathbb{Q}]=4p$
\end{enumerate}
Then $L$ is a CM field and 
$Hg(V)=SU_{L/E}.$
\end{prop}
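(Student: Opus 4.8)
The plan is to pin down the structure of $L$ by a dimension count, to obtain the inclusion $Hg(V)\subseteq SU_{L/E}$ for free from Proposition \ref{contained}, and then to match dimensions by analyzing the cocharacter lattice of the Mumford-Tate torus as a Galois module. First I would record that, since $E$ is imaginary quadratic, $V$ has Hodge numbers $(2p,0,\ldots,0,2p)$ as a $\mathbb{Q}$-Hodge structure, so in the Type IV notation of Section \ref{endclass} one has $n=2p$ and $2n=4p=m[L:\mathbb{Q}]$. As $[L:\mathbb{Q}]=4p$ this forces $m=1$, and writing $[L:\mathbb{Q}]=2gq^2$ with $g=[F:\mathbb{Q}]$ and $q^2=[L:F_0]$ gives $2p=gq^2$. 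Since $p$ is an odd prime, the only square dividing $2p$ is $1$, so $q=1$ and $L=F_0$ is a CM field of degree $4p$, which proves the first assertion. In particular $\dim_LV=1$, so $V$ is a CM Hodge structure. Because $L/\mathbb{Q}$ is Galois I may identify $\Sigma(L)$ with $\Gamma:=\mathrm{Gal}(L/\mathbb{Q})$; complex conjugation is then a central involution $c\in\Gamma$ with $F=L^{\langle c\rangle}$, and since $E$ is imaginary the restriction $c|_E$ is nontrivial, so $c\notin H_E:=\mathrm{Gal}(L/E)$. As $[\Gamma:H_E]=2$ and $c$ is central, this yields a direct product decomposition $\Gamma=H_E\times\langle c\rangle$ with $|H_E|=2p$ and $L=EF$.

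Next, since $V$ is an $E$-Hodge structure with Hodge numbers $(p,0,\ldots,0,p)$ and $[L:\mathbb{Q}]=2\cdot p\cdot 2$, Proposition \ref{contained} applies and gives $Hg(V)\subseteq SU_{L/E}$. As $J=\mathbb{Q}$ here, the torus $SU_{L/E}$ has codimension $1$ in $U_L$, hence dimension $2p-1$; it therefore suffices to prove $\dim Hg(V)\ge 2p-1$. To do this I would pass to cocharacter lattices, identifying $X_*(R_{L/\mathbb{Q}}\mathbb{G}_m)_\mathbb{Q}$ with the group algebra $\mathbb{Q}[\Gamma]$, on which $\Gamma$ acts through the Galois action and $c$ is a central involution. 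The Hodge cocharacter is $\mu_\Phi=\sum_{\gamma\in\Phi}[\gamma]$ for the CM type $\Phi$, and the hypothesis that $V$ has Hodge numbers $(p,0,\ldots,0,p)$ as an $E$-Hodge structure says exactly that $\Phi$ meets each of the two cosets $H_E$ and $cH_E$ in $p$ elements, so that $(1+c)\mu_\Phi$ is the all-ones vector $\nu_0$. Using $MT(V)=\mathbb{G}_m\cdot Hg(V)$ together with $U_L\subseteq SL(V)$, I would show that $X_*(Hg(V))_\mathbb{Q}$ equals the $\Gamma$-submodule of $\mathbb{Q}[\Gamma]^{c=-1}$ generated by $\mu_-:=\mu_\Phi-\tfrac12\nu_0$, while $X_*(SU_{L/E})_\mathbb{Q}$ is the codimension-one $\Gamma$-submodule of $\mathbb{Q}[\Gamma]^{c=-1}$ given by the kernel of the norm map to $X_*(U_E)_\mathbb{Q}$.

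The heart of the argument, and the step I expect to be hardest, is to show that $\mu_-$ generates all of $X_*(SU_{L/E})_\mathbb{Q}$. Here I would use that the only groups of order $2p$ are $\mathbb{Z}/2p$ and the dihedral group $D_p$, and decompose $\mathbb{Q}[\Gamma]^{c=-1}$ into its $\Gamma$-isotypic components in each case. The key input is the endomorphism hypothesis: since $\mathrm{End}_{\mathbb{Q}-\mathrm{HS}}(V)=L$ and $\dim_\mathbb{Q}L=4p=\dim_\mathbb{Q}V$, the identity $L=[\mathrm{End}_\mathbb{Q}(V)]^{Hg(V)}$ forces the $4p$ weight characters by which $Hg(V)$ acts on the lines $\mathbb{C}_\gamma\subseteq V_\mathbb{C}$ to be pairwise distinct. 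If $\mu_-$ failed to generate some irreducible constituent of $X_*(SU_{L/E})_\mathbb{Q}$, then two of these characters would necessarily coincide, contradicting the distinctness. The main technical point is to verify this implication in the dihedral case, where the relevant isotypic component occurs with multiplicity two and one must check that the two isotypic components of $\mu_-$ are linearly independent; in the cyclic case $\mathbb{Q}[\Gamma]^{c=-1}$ is multiplicity-free and the argument is more direct.

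Granting this, the $\Gamma$-submodule generated by $\mu_-$ is all of $X_*(SU_{L/E})_\mathbb{Q}$, so $X_*(Hg(V))_\mathbb{Q}=X_*(SU_{L/E})_\mathbb{Q}$; since $Hg(V)\subseteq SU_{L/E}$, the two tori coincide, which is the desired conclusion $Hg(V)=SU_{L/E}$.
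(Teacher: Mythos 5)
Your first two steps track the paper's proof closely: the deduction that $q=1$ and hence $L$ is a CM field of degree $4p$ is the same Albert-classification argument, and the inclusion $Hg(V)\subseteq SU_{L/E}$ with $\dim SU_{L/E}=2p-1$ is obtained, as in the paper, from Proposition \ref{contained}. Your reduction of the remaining inequality $\dim Hg(V)\ge 2p-1$ to a statement about the $\Gamma$-submodule of $\mathbb{Q}[\Gamma]^{c=-1}$ generated by $\mu_-=\mu_\Phi-\tfrac12\nu_0$ is also sound (it is the cocharacter-lattice reformulation of the Kubota rank, which the paper accesses via \cite[Proposition V.D.5]{domain}). The divergence is in how that inequality is established: the paper cites Tankeev's lower bound on the Kubota rank of a simple CM type \cite[Corollary 3.15]{tankeev}, whereas you propose to prove it directly from the representation theory of $\Gamma=H_E\times\langle c\rangle$.

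That final step is where your argument has a genuine gap. Your stated mechanism is: if $\mu_-$ fails to generate some irreducible constituent of $X_*(SU_{L/E})_\mathbb{Q}$, then two of the $4p$ weight characters of $Hg(V)$ on the lines of $V_\mathbb{C}$ must coincide, contradicting $\mathrm{End}_{\mathbb{Q}\text{-HS}}(V)=L$. Unwinding the duality, two characters $[\gamma_1],[\gamma_2]$ coincide on $Hg(V)$ precisely when the CM type $\Phi$ is stable under translation by the nontrivial element $\gamma_1^{-1}\gamma_2$, i.e.\ precisely when $\Phi$ is imprimitive. So your claimed implication is exactly the contrapositive of the theorem you are trying to prove ($\Phi$ primitive $\Rightarrow$ $\mu_-$ generates all of $X_*(SU_{L/E})_\mathbb{Q}$), restated rather than established: missing an isotypic component only places $[\gamma_1]-[\gamma_2]$ in a subspace of the annihilator, and there is no general reason that subspace contains an actual difference of two basis vectors. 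Indeed, the analogous implication is false for general CM fields --- there exist primitive CM types whose Hodge group is a proper subtorus of the Lefschetz torus with no extra endomorphisms --- so the argument must use the specific arithmetic of $\Gamma=H_E\times\langle c\rangle$ with $|H_E|=2p$ (vanishing sums of roots of unity in the cyclic case, and the multiplicity-two component in the dihedral case, which you explicitly leave as ``one must check''). This deferred verification is precisely the content of Tankeev's theorem, so as written the proof is incomplete at its central point; either carry out that computation in both cases or cite \cite[Corollary 3.15]{tankeev} as the paper does.
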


\begin{proof}
Note, first of all, that using Albert's classification, since $L$ is of Type IV, we know the center $F_0$ of $L$ is a CM field and hence $[F_0:\mathbb{Q}]$ is even. Writing $q^2=[L:F_0]$, since $[L:\mathbb{Q}]=4p$ with $p$ odd, we have that $q=1$ and so $L$ is a CM field. Thus, Proposition \ref{contained} implies $Hg(V)\subseteq SU_{L/E}.$

Since $E$ is an imaginary quadratic field, we know this torus $SU_{L/E}$ has codimension $1$ in $U_L$, which has rank $[F:\mathbb{Q}]=2p$. Hence the rank of $SU_{L/E}$ is $2p-1$. 

Following \cite{dodson} and \cite{kubota}, for $\Sigma(L)=\{\sigma_1,\ldots, \sigma_{4p}\}$ the set of embeddings of $L$ into $\mathbb{C}$, a \emph{CM type} $\Theta\subset \Sigma(L)$ is defined by the criterion
$\Theta \cup \overline{\Theta}=\Sigma(L)$ and 
the Hodge structure $V$ corresponds to a unique CM type $\Theta$ on L \cite[Section V.C]{domain}.

Now let $\tilde{L}$ be the Galois closure of $L$ and consider $\mathrm{Gal}(\tilde{L}/\mathbb{Q})$. For every $g\in \mathrm{Gal}(\tilde{L}/\mathbb{Q})$ and for every $\sigma_i\in \Theta$, let $\sigma_i^g\colon L \rightarrow \mathbb{C}$ be the element of $\Sigma(L)$ defined by $x \mapsto g\cdot \sigma_i(x)$. We then have a CM-type on $L$ given by
$$\Theta^g=\{\sigma_1^g,\ldots, \sigma_{4p}^g\}.$$

The \emph{Kubota rank} of $\Theta$, denoted $\mathrm{Rank}(\Theta)$, is the rank over $\mathbb{Z}$ of the submodule of $\mathbb{Z}[\Sigma(L)]$ spanned by the set
$\{ \Theta^g \mid g\in \mathrm{Gal}(\tilde{L}/\mathbb{Q})\}.$

We then have \cite[Proposition V.D.5]{domain}
\begin{equation}\label{cmrank}
\mathrm{Rank}(\Theta)=\dim_\mathbb{Q}Hg(V).
\end{equation}

However, Tankeev proves in \cite[Corollary 3.15]{tankeev}, that for a simple CM type $\Theta$ on a CM field $L$ of degree $2p$ over $\mathbb{Q}$, where $p$ is an odd prime,  we have:
$$\mathrm{Rank}(\Theta) \ge 2p-1.$$

Hence it follows from (\ref{cmrank}) that we have $\dim_\mathbb{Q}Hg(V)\ge 2p-1$ and so the result is proved. 
\end{proof}

We now combine the  above results to obtain the following theorem about simple polarizable $\mathbb{Q}$-Hodge structures that have endomorphism algebra of Type IV and that are also $E$-Hodge structures with Hodge numbers $(p,0,\ldots,0,p)$. 

\begin{thm}\label{2pthm}
Let $V$ be a simple polarizable $\mathbb{Q}$-Hodge structure of weight $w\ge1$ which is an $E$-Hodge structure with Hodge numbers $(p,0,\ldots,0,p)$, where $p$ is a prime and $E$ is an imaginary quadratic field.  If the endomorphism algebra $L=\mathrm{End}_{\mathbb{Q}-HS}(V)$ is of Type IV, then $L$ is a CM field. Writing $4p=m{[L:\mathbb{Q}]}$, letting $B\cong M_m(L^{\mathrm{op}})$ be the centralizer of $L$ in $\mathrm{End}_\mathbb{Q}(V)$, and letting $F$ be the maximal totally real subfield of $L$, we have
\begin{equation*}
Hg(V)=
\begin{cases}
R_{F/\mathbb{Q}}SU(B,^-) &\mbox{if } [L:\mathbb{Q}]\ne 4p\\
SU_{L/E} &\mbox{if } [L:\mathbb{Q}]=4p \text{ and either } p=2 \text{ or } L/\mathbb{Q} \text{ is a Galois extension.}
\end{cases}
\end{equation*}
\end{thm}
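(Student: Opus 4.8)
The plan is to reduce the statement to the earlier Type IV propositions according to the value of $[L:\mathbb{Q}]$. First I would record that, since $E$ is imaginary quadratic, $V$ is a $\mathbb{Q}$-Hodge structure with Hodge numbers $(2p,0,\ldots,0,2p)$, so that $2n=4p$. Writing as usual $2n=m[L:\mathbb{Q}]=mq^2[F_0:\mathbb{Q}]$ and using that $[F_0:\mathbb{Q}]=2[F:\mathbb{Q}]$ because $F_0$ is CM, this gives $2p=mq^2[F:\mathbb{Q}]$. When $p$ is odd, $2p$ is squarefree, so $q^2\mid 2p$ forces $q=1$; when $p=2$ the only remaining possibility is $q=2$, which forces $m=[F:\mathbb{Q}]=1$ and hence lands in the Type IV exceptional cases of Theorem \ref{totthm} (either $\sum_i n_{\sigma_i}n_{\overline{\sigma}_i}=0$ with $m=1,q=2$, or $n_{\sigma_1}=n_{\overline{\sigma}_1}=1$), contradicting the existence of the simple $V$. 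Thus $q=1$ and $L=F_0$ is a CM field containing $E$, so that $[L:\mathbb{Q}]$ is an even divisor of $4p$, namely one of $2$, $4$, $2p$, $4p$.

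Next I would dispatch the three cases with $[L:\mathbb{Q}]\ne 4p$, each of which should yield $Hg(V)=R_{F/\mathbb{Q}}SU(B,^-)$. If $[L:\mathbb{Q}]=2$ then $L=E$ and the claim is exactly Proposition \ref{Ealg} (with $J=\mathbb{Q}$, so $R_{J/\mathbb{Q}}SU(C,^-)=R_{F/\mathbb{Q}}SU(B,^-)$). If $[L:\mathbb{Q}]=2p$ then $m=2$ and the claim is Proposition \ref{index2}, which needs no further hypotheses. The case $[L:\mathbb{Q}]=4$ (so $p$ is odd and $m=p$) is Proposition \ref{Ecoprime}, whose only extra hypothesis is coprimality of $n_\sigma$ and $n_{\overline{\sigma}}$ for every $\sigma\in\Sigma(L)$; here I would verify this by hand. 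Since $n_\sigma+n_{\overline{\sigma}}=m=p$, coprimality can fail only if some $n_\sigma\in\{0,p\}$, and the balanced $E$-Hodge numbers $(p,\ldots,p)$ together with $n_\sigma+n_{\overline{\sigma}}=p$ force such a degeneracy to propagate to both conjugate pairs, whence $\sum_i n_{\sigma_i}n_{\overline{\sigma}_i}=0$; as $m=p>1$ this is again an exceptional case of Theorem \ref{totthm}, contradicting simplicity. Hence the hypotheses of Proposition \ref{Ecoprime} are met and the case is settled.

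Finally, for $[L:\mathbb{Q}]=4p$ we have $m=1$, so $\dim_L V=1$ and Proposition \ref{contained} gives the inclusion $Hg(V)\subseteq SU_{L/E}$; it remains to prove equality. Since $E$ is imaginary quadratic, $SU_{L/E}$ has codimension $1$ in the rank-$[F:\mathbb{Q}]=2p$ torus $U_L$, so $\dim SU_{L/E}=2p-1$, and by the Kubota-rank formula (\ref{cmrank}) it suffices to show $\dim_\mathbb{Q}Hg(V)=\mathrm{Rank}(\Theta)\ge 2p-1$. For $p$ odd with $L/\mathbb{Q}$ Galois this is precisely Proposition \ref{Etorus}. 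For $p=2$ the required bound $\mathrm{Rank}(\Theta)\ge 3$ for the relevant simple CM type on the degree-$8$ CM field $L$ must instead be extracted from the classification of CM types in this degree, using that the embedded field $E$ keeps $\Theta$ non-degenerate. I expect this last step to be the main obstacle: the three middle cases are essentially bookkeeping layered on the earlier propositions, whereas matching $\dim Hg(V)$ to $\dim SU_{L/E}$ when $[L:\mathbb{Q}]=4p$ rests on a genuinely arithmetic lower bound for the Kubota rank — clean for odd $p$ via Proposition \ref{Etorus}, but requiring the degree-$8$ analysis when $p=2$.
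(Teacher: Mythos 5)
Your reduction is the same as the paper's: first show $L$ is a CM field (ruling out $q=2$ via Totaro's exceptional cases), then split into $[L:\mathbb{Q}]\in\{2,4,2p,4p\}$ and invoke Propositions \ref{Ealg}, \ref{Ecoprime}, \ref{index2}, \ref{contained}, and \ref{Etorus} respectively. The three cases with $[L:\mathbb{Q}]\ne 4p$ are handled exactly as in the paper; in the $[L:\mathbb{Q}]=4$ case your explicit exclusion of $n_\sigma\in\{0,p\}$ (using that the two conjugate pairs of embeddings of $L$ over a fixed embedding of $E$ force $n_{\sigma_2}=n_{\overline{\sigma}_1}$ and $n_{\overline{\sigma}_2}=n_{\sigma_1}$, so a degenerate pair gives $\sum_i n_{\sigma_i}n_{\overline{\sigma}_i}=0$, which is exceptional case (3) of Theorem \ref{totthm}) is in fact more careful than the paper, which simply asserts coprimality from the primality of $p$.

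The one genuine gap is the sub-case $[L:\mathbb{Q}]=4p$ with $p=2$, which you leave open and propose to attack via a classification of simple CM types on degree-$8$ CM fields. No such classification is needed: since $L$ is a commutative CM field, Lemma \ref{bound} applies and gives $\mathrm{Rank}(Hg(V))\ge\log_2(2n)=\log_2 8=3$. As $Hg(V)$ is a connected subgroup of the torus $SU_{L/E}$, it is itself a subtorus, so its dimension equals its rank and is at least $3=\dim SU_{L/E}$, forcing $Hg(V)=SU_{L/E}$. This is exactly how the paper closes this case; for odd $p$ the bound $\log_2(4p)$ is far too weak to reach $2p-1$, which is why the Kubota-rank argument of Proposition \ref{Etorus} (and the Galois hypothesis) is needed there but not for $p=2$.
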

\begin{proof}
Observe that since $\dim_EV=2p$, we must have, using the embedding $E\hookrightarrow L$, that $[L:E]$ is equal to one of $1$, $2$, $p$, or $2p$.  The case when $[L:E]=1$ is taken care of by Proposition \ref{Ealg}.  

In the case when $[L:E]=2$, we know $[L:\mathbb{Q}]=4$. Since $L$ is assumed to be of Type IV, Albert's classification (see Section \ref{endalgsection}) yields that $L$ is a CM field. So $m=p$ and $q=1$, and thus  we have $n_\sigma+n_{\overline{\sigma}}=p$ for all embeddings $\sigma \in \Sigma(L)$. But since $p$ is prime, the numbers $n_{\sigma}$ and $n_{\overline{\sigma}}$ are coprime and so the result follows from Proposition \ref{Ecoprime}.  

When $[L:E]=p$, we know $[L:\mathbb{Q}]=2p$, and so, because $L$ is of Type IV, Albert's classification yields that $L$ is a CM field. Hence the result follows from Proposition \ref{index2}. 

Lastly, consider the case when $[L:E]=2p$, namely when $[L:\mathbb{Q}]=4p$. If $p$ is an odd prime, then the result follows by Proposition \ref{Etorus}. So suppose we have $p=2$. Then Albert's classification yields that either $L$ is a CM field of degree $8$ over $\mathbb{Q}$ or $L$ is a division algebra of degree $4$ over an imaginary quadratic field $F_0$. In the latter case, we would have $n_\sigma+n_{\overline{\sigma}}=2$, where $\sigma$ and $\overline{\sigma}$ are the $2$ embeddings of $F_0$ into $\mathbb{C}$. Namely, we have either $n_\sigma=n_{\overline{\sigma}}=1$ or $n_\sigma=2$ and $n_{\overline{\sigma}}=0$, which correspond to exceptional cases (3) and (4) in Totaro's classification (see Theorem \ref{totthm}). Hence the endomorphism algebra $L$ cannot be a division algebra and therefore must be a CM field of degree $8$ over $\mathbb{Q}$. Moreover, by Lemma \ref{sunitary}, because $V$ is an $E$-Hodge structure, we know $Hg(V)$ is contained in $SU_{L/E}$, which has dimension $3$. However by Lemma \ref{bound}, the group $Hg(V)$ has dimension at least $3$. So $Hg(V)$ is equal to $SU_{L/E}$, which finishes the proof. 
\end{proof} 

\section{Main Results}\label{mainresults}

\begin{prop}\label{onedim}
Let $V$ be a simple polarizable $\mathbb{Q}$-Hodge structure of weight $w\ge1$ and Hodge numbers $(1,0,\ldots,0,1)$. Then
$Hg(V)=Lef(V).$
\end{prop}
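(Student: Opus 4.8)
The plan is to reduce the statement to an elementary classification of connected subgroups once the endomorphism algebra $L$ and the Lefschetz group have been pinned down. First I would invoke Theorem \ref{totthm}: with $n=1$ the divisibility bounds force $[F:\mathbb{Q}]\mid 1$ and $[L:\mathbb{Q}]\mid 2$, so $F=\mathbb{Q}$ and $[L:\mathbb{Q}]\in\{1,2\}$. Since $F$ is the subfield fixed by the Rosati involution on the center of $L$, a real quadratic $L$ (which would give $F=L$ and hence $[F:\mathbb{Q}]=2$) is impossible; thus either $L=\mathbb{Q}$ (Type I) or $L$ is an imaginary quadratic field (Type IV), and in either case $\dim_{\mathbb{Q}}V=2$. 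I would also record that for $L=\mathbb{Q}$ one has $m=2$, so by even-weight exceptional case (6) of Theorem \ref{totthm} the Type I possibility occurs only in odd weight.

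Next I would read off $Lef(V)$ from Table \ref{exphg} in each surviving case. For $L=\mathbb{Q}$ (Type I, odd weight), $Lef(V)=Sp(V)\cong SL_2$. For $L$ imaginary quadratic (Type IV, either parity), $m=1$ forces $B\cong L$ with its complex-conjugation (unitary) involution and $q=1$, so $Lef(V)=R_{F/\mathbb{Q}}U(B,^{-})=U(B,^{-})$ is a one-dimensional torus, namely the norm-one torus of $L/\mathbb{Q}$. In all cases Remark \ref{lefrem} gives the containment $Hg(V)\subseteq Lef(V)$, and since $V^{w,0}\neq 0$ with $w\ge 1$, Remark \ref{triv} shows that $Hg(V)$ is nontrivial.

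Finally I would close each case by a structural argument. In the Type IV case, $Hg(V)$ is a connected nontrivial subgroup of the one-dimensional torus $Lef(V)$, hence equals it. In the Type I case, Remark \ref{ss} shows $Hg(V)$ is semisimple, and the only nontrivial connected semisimple subgroup of $SL_2$ is $SL_2$ itself, so again $Hg(V)=Lef(V)$. I expect no serious obstacle: the only genuine work is the bookkeeping with Totaro's classification, in particular verifying that real quadratic $L$ cannot occur and that even-weight Type I is excluded, after which the dimension count reduces everything to the elementary classification of connected subgroups of a one-dimensional torus and of $SL_2$. (Alternatively, one may obtain the lower bound $\mathrm{Rank}(Hg(V))\ge \log_2(2n)=1$ directly from Lemma \ref{bound}, since $L$ is commutative, which gives both nontriviality and the matching of ranks in a single step.)
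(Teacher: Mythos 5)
Your proof is correct and follows essentially the same route as the paper: the same two cases for $L$ (namely $\mathbb{Q}$, occurring only in odd weight, or an imaginary quadratic field), the containment $Hg(V)\subseteq Lef(V)$ from Remark \ref{lefrem}, nontriviality via Remark \ref{triv}, and the concluding rank/dimension count using semisimplicity (Remark \ref{ss}) against $SL_2$ in the Type I case and the one-dimensionality of the torus $U_L$ in the Type IV case. The only cosmetic differences are that you extract the list of possible $L$ from Totaro's divisibility bounds rather than citing Albert's classification directly, and that you correctly attribute the exclusion of Type I to the \emph{even}-weight exceptional case (6) of Theorem \ref{totthm}, where the paper's wording has an apparent odd/even slip.
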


\begin{proof}
Albert's classification yields that the endomorphism algebra $L$ of $V$ is either $\mathbb{Q}$ or an imaginary quadratic field. Writing $2=m[L:\mathbb{Q}]$, this corresponds to $L$ being of Type I with $m=2$ or $L$ being of Type IV with $m=1$. When the weight $w$ is odd, the first case corresponds to exceptional case (6) in Totaro's classification. Thus, if $w$ is odd, then the endomorphism algebra $L$ of $V$ is either $\mathbb{Q}$ or an imaginary quadratic field, while if $w$ is even, then $L$ must be an imaginary quadratic field. 

In the case when $L$ is $\mathbb{Q}$, then by Table  \ref{exphg}, we have $Hg(V)\subset SL(V)$, where $SL(V)$ has rank $1$. However, by Remark \ref{triv}, the Hodge group $Hg(V)$ is nontrivial and by Remark \ref{ss} the group $Hg(V)$ is semisimple. Hence $Hg(V)$ is equal to $SL(2)$. 

In the case when $L$ is an imaginary quadratic field, Table  \ref{exphg} yields  $Hg(V)\subset U_L$. Since $U_L$ has dimension $1$ and $Hg(V)$ must be nontrivial, we get $Hg(V)=U_L$.
\end{proof}

\subsection{Hodge Numbers $(p,0,\ldots,0,p)$}
\begin{thm}\label{primethm} Let $V$ be a simple polarizable $\mathbb{Q}$-Hodge structure of weight $w\ge1$ with Hodge numbers $(p,0,\ldots, 0, p)$, where $p$ is prime. Then
$Hg(V)=Lef(V).$
\end{thm}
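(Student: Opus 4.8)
The plan is to split on the parity of the weight and, in each case, reduce to the structural results already established, using the hypothesis that $n=p$ is prime to eliminate all non-Lefschetz alternatives. First I would dispose of the odd-weight case wholesale: by Remark \ref{equiv} a simple polarizable $\mathbb{Q}$-Hodge structure of odd weight with Hodge numbers $(p,0,\ldots,0,p)$ corresponds to a simple complex abelian variety of dimension $p$, and the Hodge group is preserved by this equivalence, so the equality $Hg(V)=Lef(V)$ follows at once from the prime-dimension theorems of Ribet \cite{ribet1} and Tankeev \cite{tankeevrib}. Thus I may assume throughout that $w$ is even, which is the genuinely new content.

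For even weight I would enumerate the possible endomorphism algebras $L$ via Totaro's bounds (Theorem \ref{totthm}): $[L:\mathbb{Q}]$ divides $2p$ and $[F:\mathbb{Q}]$ divides $p$, so $[L:\mathbb{Q}]\in\{1,2,p,2p\}$ and $[F:\mathbb{Q}]\in\{1,p\}$. Primality of $p$ then collapses Albert's four types to a short list. For Type I the only survivor is $L=\mathbb{Q}$, since $[L:\mathbb{Q}]=p$ gives $m=2$, which is the excluded even-weight case of Theorem \ref{totthm}; this case is handled by Proposition \ref{rational} when $p$ is odd (so $l=p$ is odd) and by Proposition \ref{2TypeI} when $p=2$. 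Types II and III can only occur when $p=2$, with $F=\mathbb{Q}$, $[L:\mathbb{Q}]=4$, and $m=1$; here Type II is the excluded even-weight case $(1)$ of Theorem \ref{totthm}, while Type III is covered by Proposition \ref{oddquat} with $m=1$. Type IV splits into the imaginary-quadratic case ($g=1$, $m=p$, $q=1$), handled by Proposition \ref{imaginary} — where $n_\sigma$ and $n_{\overline\sigma}$ are coprime because $n_\sigma+n_{\overline\sigma}=p$ and neither summand vanishes, the vanishing being excluded case $(3)$ of Theorem \ref{totthm} — and the CM case treated below. In every one of these families the cited proposition returns precisely the Lefschetz group.

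What makes each such proposition yield a \emph{unique} group rather than the Lefschetz group together with an exceptional $R_{\cdot/\mathbb{Q}}SU(2^k)$ is the prime-ness of $p$. The exceptional group arises only when $2l=\binom{2^k}{2^{k-1}}$ (respectively $2m=\binom{2^k}{2^{k-1}}$) for some $k\ge 3$. Here the relevant invariant $l$ or $m$ equals $1$ or $p$, and $\binom{2^k}{2^{k-1}}/2$ is composite and at least $35$ for all $k\ge 3$; hence this equation has no solution when the invariant is $1$ or a prime. I would record this elementary observation explicitly, as it is exactly the reason no exceptional representation is realizable and only the Lefschetz group survives.

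The remaining, and I expect hardest, case is Type IV with $L$ a CM field of degree $2p$, so that $V$ is a CM Hodge structure one-dimensional over $L$ and $Lef(V)$ is the norm torus $U_L$ of dimension $p$. The even-weight representation-theoretic propositions do not apply here; instead $Hg(V)$ is a subtorus of $U_L$ whose dimension equals the Kubota rank $\mathrm{Rank}(\Theta)$ of the associated CM type $\Theta$, by the formula $(\ref{cmrank})$ of \cite[Proposition V.D.5]{domain}. The crucial point is that $\mathrm{Rank}(\Theta)$ is a purely combinatorial invariant of $\Theta$, independent of the weight, so it coincides with the Kubota rank computed for the corresponding weight-one CM abelian variety of dimension $p$; the nondegeneracy of a simple CM type on a CM field of prime degree $2p$ (Ribet \cite{ribet2}, Tankeev \cite{tankeev}) then forces $\mathrm{Rank}(\Theta)=p$, giving $Hg(V)=U_L=Lef(V)$. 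The main obstacle is precisely this CM case: it lies outside the machinery used for the other types and depends both on the external nondegeneracy theorem for prime-degree CM types and on the weight-insensitivity of the Kubota rank that lets the even-weight computation inherit the known weight-one answer.
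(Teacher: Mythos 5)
Your proposal is correct and its overall architecture matches the paper's: reduce odd weight to Ribet--Tankeev via Remark \ref{equiv}, enumerate the even-weight endomorphism algebras from Totaro's bounds, and observe that primality kills the exceptional $SU(2^k)$ branches because $\tbinom{2^k}{2^{k-1}}/2$ is composite for $k\ge 3$ (the paper does record a proof of this, via De Polignac's formula and a prime-counting bound giving two uncancelled odd primes in the numerator, so you should indeed write that out as you promise). The one place you genuinely diverge is the hardest case, $L$ a CM field of degree $2p$. There the paper does not invoke the weight-one theorem; it redoes Ribet's Galois-module argument from scratch in weight $w$: it decomposes $Y=X^*(H)\otimes\mathbb{Q}$ over $\mathbb{Q}[x]/(x^p-1)\cong\mathbb{Q}(\mu_p)\times\mathbb{Q}$ for a Galois element $g$ of order $p$, gets $\dim_{\mathbb{Q}}H\ge p-1$ from the $\mathbb{Q}(\mu_p)$-part, and then shows the fixed part is nonzero by pairing $\chi=\sigma_0+g\sigma_0+\cdots+g^{p-1}\sigma_0$ against $\eta=\Theta-\overline{\Theta}$, the point being that $\langle\eta,\chi\rangle$ is a sum of $p$ terms each equal to $\pm w$, hence an odd multiple of $w$ and nonzero; for $p=2$ (where that parity argument fails) the paper instead uses the rank bound of Lemma \ref{bound}. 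Your route — the Kubota-rank identity (\ref{cmrank}), the observation that $\mathrm{Rank}(\Theta)$ is a combinatorial invariant of $\Theta$ alone (the cocharacter $\gamma$ is just $w$ times the weight-one cocharacter, so the span of its Galois conjugates is unchanged), and the citation of the Ribet--Tankeev nondegeneracy theorem for primitive CM types of prime degree — is legitimate and shorter, and the needed primitivity of $\Theta$ does follow from simplicity of $V$; what it costs you is self-containedness, since you outsource exactly the computation the paper chooses to reprove, and you should make the primitivity step explicit. Your minor rerouting of the $p=2$ subcases ($L=\mathbb{Q}$ through Proposition \ref{2TypeI}, Type III through Proposition \ref{oddquat} with $m=1$, rather than the paper's rank arguments) is harmless, as is your appeal to Proposition \ref{imaginary} in the $p=2$ imaginary-quadratic case, which is in fact vacuous by Totaro's exceptional cases (3) and (4).
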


\begin{proof}
When $V$ is of odd weight, the result follows using the equivalence of Remark \ref{equiv} together with results of Ribet \cite[Theorems 1,2]{ribet1}, from whose proofs we extensively borrow, and Tankeev \cite{tankeevrib}, which prove the result for simple complex abelian varieties of prime dimension. Thus we may assume that $V$ is of even weight.

Assume first that $p$ is odd. Then Albert's classification yields that the endomorphism algebra $L$ of $V$ is either a totally real field or a CM field. Moreover, Totaro's exceptional case (6) allows us to eliminate the possibility that $L$ is a totally real field of degree $p$ over $\mathbb{Q}$. Thus we are left with the following possibilities for $L$:
\begin{enumerate}
 \item $\mathbb{Q}$ (Type I)
 \item An imaginary quadratic field (Type IV)
 \item A CM-field of degree $2p$ over $\mathbb{Q}$ (Type IV).
 \end{enumerate} 

First consider Case (1). By Table \ref{exphg}, the Lefschetz group $Lef(V)$ of $V$ is $SO(V)$. Letting $H=Hg(V)$ denote the Hodge group of $V$, Proposition \ref{rational} yields that $H$ is either equal to $Lef(V)$ or is equal to $SU(2^k)$, where $2p={2^k\choose 2^{k-1}}$ for some $k\ge 3$. Thus, to prove the result we must show that this second option cannot occur. To do this, we use the following simple combinatorial argument to show that we cannot have $2p={2^k\choose 2^{k-1}}$ for any $k\ge 3$.
We know
$${2^k\choose 2^{k-1}}=2^{2^{k-1}}\cdot\frac{1\cdot3\cdot5\cdots(2^k-1)}{(2^{k-1})!}.$$
Moreover, by De Polignac's formula for the prime factorization of $n!$, the 2-adic order of the term $(2^{k-1})!$ is $2^{k-1}-1$.  So, after cancellation, we have
\begin{equation}\label{choose}{2^k\choose 2^{k-1}}=2\cdot \frac{(\text{product of all odd numbers less than }2^k)}{(\text{product of some odd numbers all less than }2^{k-1})}.\end{equation}
Using known bounds on the prime-counting function $\pi$ \cite[Corollary 1]{primecounting} yields $\pi(2^k)-\pi(2^{k-1})\ge2$ for $k\ge3$. Namely, the numerator in (\ref{choose}) always contains at least two terms not cancelled by the denominator. Hence we cannot have $2p={2^k\choose 2^{k-1}}$ for any $k\ge3$. This finishes Case (1). 

Now consider Case (2). In this case, the Lefschetz group $Lef(V)$ of $V$ is $U(B,^{-})$. As in previous proofs, since $m=p$ and $q=1$ in this case, we have $n_\sigma+n_{\overline{\sigma}}=p$ in the decomposition of $V\otimes_\mathbb{Q}\mathbb{C}$ induced by $L\otimes _\mathbb{Q}\mathbb{C}$. Hence $n_{\sigma}$ and $n_{\overline{\sigma}}$ are coprime and thus the result follows from Proposition \ref{imaginary}. 

Finally, consider Case (3). In this case, the Lefschetz group of $V$ is  $U_L=\ker( \mathrm{Norm}_{L}\colon R_{L/\mathbb{Q}}\mathbb{G}_m\rightarrow R_{F/\mathbb{Q}}\mathbb{G}_m)$, so we know $H\subset U_L$.  Since $[F:\mathbb{Q}]=p$, the torus $U_L$ has dimension $p$ over $\mathbb{Q}$, and so to show $H=U_L$, we just need to show that $\dim_\mathbb{Q}(H)=p$.

Consider the character groups $X^*(H)$, $X^*(U_L)$, and $X^*(R_{L/\mathbb{Q}}\mathbb{G}_m)$. Since both $H$ and $U_L$ are contained in $R_{L/\mathbb{Q}}\mathbb{G}_m$, the groups $X^*(H)$ and $X^*(U_L)$ are quotients of $X^*(R_{L/\mathbb{Q}}\mathbb{G}_m)$.  Here $X^*(R_{L/\mathbb{Q}}(\mathbb{G}_m))$ is the free abelian group on the embeddings $\sigma \in \Sigma(L)$. The group $X^*(R_{L/\mathbb{Q}}\mathbb{G}_m)$ has a natural left action by $\mathrm{Gal}(\overline{\mathbb{Q}}/\mathbb{Q})$. The group $X^*(U_L)$ is the quotient of $X^*(R_{L/\mathbb{Q}}\mathbb{G}_m)$ by the relation $\sigma + \overline{\sigma}=0$ for all embeddings $\sigma \in \Sigma(L)$. The group $X^*(H)$ is a quotient of $X^*(U_L)$ with the property that the images of the embeddings $\sigma$ in $X^*(H)$ are all distinct. Indeed, since $L=\mathrm{End}_H(V)$ is commutative, the $H$-module $V$ cannot have multiplicities greater than $1$ in its decomposition into simple modules over $H$, each corresponding to an embedding $\sigma\in \Sigma(L)$. 

Now, consider the homomorphism 
\[\rho:\mathrm{Gal}(\overline{\mathbb{Q}}/\mathbb{Q})\rightarrow \mathrm{Aut}(X^*(H))\]
giving the action of $\mathrm{Gal}(\overline{\mathbb{Q}}/\mathbb{Q})$ on $\mathrm{Aut}(X^*(H))$. Because the elements of $X^*(H)$ are the embeddings $\sigma\in \Sigma(L)$, each one occurring once, we have
\[\mathrm{Ker}(\rho)=\mathrm{Gal}(\overline{\mathbb{Q}}/\tilde{L}),\]where $\tilde{L}$ denotes the Galois closure of $L$. Hence the order of $\mathrm{Im}(\rho)$ is $[\tilde{L}:\mathbb{Q}]$. But  $[\tilde{L}:\mathbb{Q}]$ is divisible by the prime number $p$, since $[L:\mathbb{Q}]=2p$. Hence we may choose some $g \in \mathrm{Im}(\rho)$ of order $p$. Since $g$ is in $\mathrm{Aut}(X^*(H))$, the action of $g$ on the $\mathbb{Q}$-vector space 
$$Y=X^*(H)\otimes \mathbb{Q}$$
makes $Y$ into a module over $\mathbb{Q}[x] / (x^p-1)$. We may write $\mathbb{Q}[x]/(x^p-1)=\mathbb{Q}(\mu_p)\times \mathbb{Q}$, where $\mu_p$ is a $p$-th root of unity. Thus write 
$$Y=Y_1\oplus Y_2,$$ 
where $Y_1$ is a $\mathbb{Q}(\mu_p)$-vector space and $Y_2$ is a $\mathbb{Q}$-vector space. Because $g$ has order $p$ and thus does not have order $1$, the element $x$ does not act as the identity on $Y_1$. So $Y_1$ is nonzero. Then the dimension of $Y_1$ over $\mathbb{Q}$ is a multiple of $p-1$, which yields
$$\dim_\mathbb{Q}(H)\ge p-1.$$

To show that $\dim_\mathbb{Q}(H)=p$, it just remains to show that $Y_2$ is nonzero.  Namely  we need to show that $Y$ contains a nonzero element fixed under the action of $g$. Choosing some embedding $\sigma_0: L \rightarrow \mathbb{C}$, it is clear that the element 
\begin{equation}\label{defchi}\chi=\sigma_0+g\sigma_0+\cdots+g^{p-1}\sigma_0\end{equation}
in $X^*(H)$ is fixed by $g$. So we just need to show that $\chi$ is nonzero.

Let $M$ be the Mumford-Tate group of $V$ and consider the cocharacter groups $X_*(H)$, $X_*(M)$, and $X_*(R_{L/\mathbb{Q}}\mathbb{G}_m)$. As in Proposition \ref{Etorus},  let $\Theta \subset \Sigma(L)$ be the CM type corresponding to the Hodge structure $V$ \cite[Section V.C]{domain}. We may view $\Theta$ as an element of $X_*(R_{L/\mathbb{Q}}\mathbb{G}_m)$ by identifying it with $\sigma_1+\cdots +\sigma_p$ in $X_*(R_{L/\mathbb{Q}}\mathbb{G}_m)$.

Since $M$ is the smallest $\mathbb{Q}$-algebraic group such that the cocharacter $\gamma\colon \mathbb{G}_{m}\rightarrow GL(V_\mathbb{C})$ factors through $M_\mathbb{C}$, the element $\Theta$ lies in $X_*(M)$ and, in fact, the $\mathbb{Z}[\mathrm{Gal}(\overline{\mathbb{Q}}/\mathbb{Q})]$-submodule of $X_*(R_{L/\mathbb{Q}}\mathbb{G}_m)$ generated by $\Theta$ is contained in $X_*(M)$. But $X_*(H)$ consists of the elements of $X_*(M)$ which have degree $0$ in $X_*(R_{L/\mathbb{Q}}(\mathbb{G}_m))$. Thus, in particular, $\eta=\Theta-\overline{\Theta}$ is an element of $X_*(H)$. Since each embedding $\sigma\in \Sigma(L)$ has coefficient $\pm w$ in $\eta$, we have:
$$\langle\eta,\sigma\rangle=\pm w \text{ for all }\sigma\in \Sigma(L),$$
 where $\langle\ ,\ \rangle: X_*(H)\times X^*(H)\rightarrow \mathbb{Z}$ denotes the natural bilinear pairing. Then, from the description of $\chi$ in (\ref{defchi}), the integer $\langle \eta, \chi\rangle$ is the sum of $p$ terms each of which is $\pm w$. Since $p$ is odd, this means $\langle \eta, \chi\rangle$ is nonzero. Hence $\chi$ must be nonzero. So indeed $Y$ contains a nonzero element fixed under the action of $g$ and hence we have $\dim_\mathbb{Q}H=p$. This finishes Case (3) and so the statement of the theorem holds whenever $p$ is odd. 
 
So now assume $p=2$. As before, Totaro's exceptional case (6) eliminates the possibility that $L$ is a totally real quadratic field. Additionally, Totaro's exceptional case (1) eliminates the possibility that $L$ is a totally indefinite quaternion algebra over $\mathbb{C}$. Now consider the case when $L$ is of Type IV. Since $[L:\mathbb{Q}]=4$, Albert's classification yields that $L$ is a CM field of degree $2$ or $4$ over $\mathbb{Q}$. In the first case, namely when $L$ is an imaginary quadratic field, we have $m=2$ and $q=1$ and so $n_{\sigma}+n_{\overline{\sigma}}=2$ for the two embeddings $\sigma, \overline{\sigma}\in \Sigma(L)$. Hence, either  $n_{\sigma}=0$ and $n_{\overline{\sigma}}=2$ or $n_{\sigma}=n_{\overline{\sigma}}=1$. These correspond to exceptional cases (3) and (4) in Totaro's classification, so $L$ cannot be an imaginary quadratic field. We are thus left with the following possibilities for $L$:
 \begin{enumerate}
 \item $\mathbb{Q}$ (Type I)
 \item A totally definite quaternion algebra over $\mathbb{Q}$ (Type III)
\item A CM-field of degree $4$ (Type IV).
 \end{enumerate} 
 
Consider Case (1) first. In this case, the Lefschetz group of $V$ is $SO(V)$, where $SO(V)$ has rank $2$. Observe that by Lemma \ref{bound} the rank of $H$ as an algebraic group over $\mathbb{Q}$ must be greater than or equal to 2. Moreover, by Remark \ref{ss}, the group $H$ is semisimple.  Since $SO(V)$ is a $\mathbb{Q}$-form of $SO(4)$, it contains no semisimple proper subgroups of rank at least $2$.  So indeed $H$ is equal to the Lefschetz group $SO(V)$.

For Case (2), the Lefschetz group is equal to $Sp(L,^{-}),$ which is a $\mathbb{Q}$-form of $SL_2$ and hence has rank 1. Thus we must have $H=Lef(V)$.

For Case (3), the Lefschetz group is $U_L$ which is a torus of dimension $2$.  Applying Lemma \ref{bound} yields that $H$ has dimension at least $2$, so we must have $H=U_L$. Thus, indeed, when $p=2$ the Hodge group of $V$ is always equal to the Lefschetz group of $V$, which finishes the proof.
\end{proof}


\subsection{Hodge Numbers $(4,0,\ldots,0,4)$}

\begin{thm}\label{fourfold}
 Let $V$ be a simple polarizable $\mathbb{Q}$-Hodge structure of weight $w\ge1$ with Hodge numbers $(4,0,\ldots,0,4)$ with endomorphism algebra $L$.  Then the Hodge group $Hg(V)$ of $V$ is described by Table \ref{hodgetable4}. In particular, we have
 $Hg(V)=Lef(V)$
 except in the following cases:

\begin{enumerate}
\item If $L=\mathbb{Q}$ and $w$ is odd, then we can also have
$Hg(V)=SL(2)\times SO(4),$ acting on $V$ by the product of the standard representations
\item If $L=\mathbb{Q}$ and $w$ is even, then we can also have
$Hg(V)=SO(7),$
acting on $V$ by the spin representation.
\item If $L$ is an imaginary quadratic field such that $V$ is an $L$-Hodge structure with Hodge numbers $(2,0,\ldots,0,2)$, then 
$Hg(V)=R_{F/\mathbb{Q}}SU(B,^{-}),$ where $B\cong M_4(L^{\mathrm{op}})$ is the centralizer of $L$ in $\mathrm{End}_\mathbb{Q}(V)$
\item If $L$ is a CM field of degree $8$ containing an imaginary quadratic field $E$ such that $V$ is an $E$-Hodge structure with Hodge numbers $(2,0,\ldots,0,2)$, then
$Hg(V)=SU_{L/E}$.
\end{enumerate}
\end{thm}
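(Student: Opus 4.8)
The plan is to separate odd and even weight. In odd weight the equivalence of Remark \ref{equiv} presents $V$ as the weight-one Hodge structure of a simple complex abelian fourfold, so Moonen--Zarhin's classification applies verbatim and in particular produces the exceptional group $SL(2)\times SO(4)$ of item (1). The real content is the even-weight case, which I would attack by listing every endomorphism algebra $L$ allowed by Totaro's classification (Theorem \ref{totthm}) when $n=4$, using the divisibility constraints $[L:\mathbb{Q}]\mid 8$ and $[F:\mathbb{Q}]\mid 4$, and then citing for each surviving $(L,\text{type})$ the matching proposition from Sections \ref{TypeIsection}--\ref{typeiv}. At every step Totaro's list of exceptional cases is used to discard the $L$, or the configurations of Hodge data, that cannot support a simple $V$.

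For Type I the candidates are $L=\mathbb{Q}$, $L$ real quadratic, and $L$ real quartic. The field $\mathbb{Q}$ is exactly the setting of Proposition \ref{four1}, which in even weight gives the dichotomy $SO(V)$ versus the exceptional $SO(7)$ acting by the spin representation, i.e.\ item (2). The real quadratic case has $n/[L:\mathbb{Q}]=2$ and is resolved by Proposition \ref{2TypeI} with the answer $Hg(V)=Lef(V)$, while the real quartic case has $n/[L:\mathbb{Q}]=1$ and is ruled out in even weight by Proposition \ref{rational} (it is Totaro's even-weight exceptional case (6)). For Types II and III one has either $[L:\mathbb{Q}]=4$, handled by Proposition \ref{2quat} with $Hg(V)=Lef(V)$, or $[L:\mathbb{Q}]=8$ with $m=1$, handled by Proposition \ref{oddquat}; here the Type II/even-weight combination is removed by Totaro's $m=1$ exceptional case, and the surviving cases again return the Lefschetz group.

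The crux is Type IV, where $F_0$ is CM and I would organize by $[F:\mathbb{Q}]\in\{1,2,4\}$ and the degree $q$. If $F_0$ is imaginary quadratic and $q=1$, then $L=F_0$ and $n_\sigma+n_{\overline\sigma}=4$; after deleting Totaro's exceptional case (3) the splitting is either $\{1,3\}$, where the coprimality input of Proposition \ref{imaginary} gives $Hg(V)=U(B,^-)=Lef(V)$, or the balanced $(2,2)$, which by Lemma \ref{sunitary} is precisely the statement that $V$ is an $L$-Hodge structure with Hodge numbers $(2,0,\ldots,0,2)$; Proposition \ref{Ealg} then yields $Hg(V)=R_{F/\mathbb{Q}}SU(B,^-)$, namely item (3). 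The quaternionic subcase $q=2$ over an imaginary quadratic field is eliminated outright, since every admissible $(n_\sigma,n_{\overline\sigma})$ lies in Totaro's exceptional case (3) or (5). For the CM quartic and CM octic possibilities I would invoke Theorem \ref{2pthm} with $p=2$: being an $E$-Hodge structure for an imaginary quadratic $E\subset L$ (detected by Lemma \ref{sunitary}) drops $Hg(V)$ to $R_{F/\mathbb{Q}}SU(B,^-)$ when $[L:\mathbb{Q}]=4$ and to $SU_{L/E}$ when $[L:\mathbb{Q}]=8$, the latter being item (4).

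The main obstacle is the CM bookkeeping in this last step. One must show that the only genuine non-Lefschetz Type IV outcomes are items (3) and (4): for a CM quartic the balanced, $E$-Hodge-structure configuration is forced into Totaro's exceptional case (4) and hence never occurs for a simple $V$, so every surviving CM quartic returns $Hg(V)=Lef(V)$; and for a CM octic, where $Lef(V)$ is the torus $U_L$ and $V$ corresponds to a CM type, one must check, using Lemma \ref{sunitary} together with the Kubota-rank input behind Theorem \ref{2pthm} and the lower bound of Lemma \ref{bound}, that $Hg(V)=U_L$ for precisely the CM types that do not make $V$ an $E$-Hodge structure. Identifying the $E$-Hodge-structure hypothesis as the single trigger that lowers the Hodge group, and confirming that every other configuration recovers the Lefschetz group, is where the care is required.
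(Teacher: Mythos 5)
Your overall architecture matches the paper's: reduce the odd-weight case to Moonen--Zarhin via Remark \ref{equiv} where convenient, list the admissible $L$ from Totaro's classification, and dispatch each case to the propositions of Sections \ref{TypeIsection}--\ref{typeiv}. The Type I, Type II/III, and the two easy Type IV cases ($L$ imaginary quadratic, and $q=2$ over an imaginary quadratic center) are handled correctly and essentially as in the paper. The problem is that you have deferred, rather than supplied, the argument for the two hardest Type IV cases, and the one inference you do make there does not follow.

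For $L$ a CM quartic field, you argue that the balanced $E$-Hodge-structure configuration falls into Totaro's exceptional case (4), ``so every surviving CM quartic returns $Hg(V)=Lef(V)$.'' This is a non-sequitur: excluding the configuration in which the Hodge group \emph{would} drop does not show that the Hodge group is all of $Lef(V)=R_{F/\mathbb{Q}}U(B,^-)$ (a form of $GL(2)\times GL(2)$) in the surviving configuration $(n_{\sigma_1},n_{\overline\sigma_1})=(2,0)$, $(n_{\sigma_2},n_{\overline\sigma_2})=(1,1)$. One must still rule out every proper connected subgroup --- for instance $R_{F/\mathbb{Q}}SU(B,^-)$, or a subgroup whose center is a one-dimensional subtorus of $U_L$. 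The paper's Proposition \ref{ivdeg4} does exactly this: a trivial center would force the centralizer of $Hg(V)$ in $\mathrm{End}_{\mathbb{Q}}(V)$ to be a quaternion algebra rather than $L$; a one-dimensional center must, by Lemma 7.3 of Moonen--Zarhin, equal $SU_{L/E}$ for some imaginary quadratic $E\subset L$, which by Lemma \ref{sunitary} forces the balanced Hodge numbers and contradicts $(2,0),(1,1)$; hence the center is all of $U_L$, after which a Goursat argument on the semisimple part finishes. None of this appears in your proposal. The same issue, in milder form, affects the CM octic case: Lemma \ref{bound} gives $\dim Hg(V)\ge 3$ inside the $4$-torus $U_L$, but to conclude that a $3$-dimensional Hodge group must be $SU_{L/E}$ for an imaginary quadratic $E\subset L$ --- and hence, via Lemma \ref{sunitary}, that $V$ is an $E$-Hodge structure --- you again need the Moonen--Zarhin classification of codimension-one Hodge subtori of $U_L$ (their Lemma 7.3), which you do not invoke; the ``Kubota-rank input'' you cite belongs to the odd-prime case of Theorem \ref{2pthm} and is not what is used when $p=2$. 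In short: you correctly identify where the difficulty sits, but the proof of the theorem is precisely the content you leave as ``where the care is required.''
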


\begin{table}[h]
        \caption{Hodge groups for $\mathbb{Q}$-Hodge structures with Hodge numbers $(4,0,\ldots,0,4)$}
        \label{hodgetable4}
        \begin{tabular}{|c|c|c|c|c|}
         \hline
 \raisebox{0pt}[15pt][0pt]{} $L$  &$[L:\mathbb{Q}]$ & \multicolumn{2}{|c|}{Possible Hodge Groups} &Equal to Lefschetz Group?\\
            \cline{3-4}
               &    & Odd Weight & Even Weight &   \\
            \hline
            \hline
            Type I&1&$Sp(8)$&$SO(8)$&Yes\\
            \cline{3-5}
            &&$SL(2)\times SO(4)$& - & No\\
            \cline{3-5}
            &&-& $SO(7)$ & No\\
            \cline{2-5}
              &$2$&$R_{F/\mathbb{Q}}Sp(_{F}V)$&$R_{F/\mathbb{Q}}SO(_{F}V)$&Yes\\
              \cline{2-5}
                &$4$&$R_{F/\mathbb{Q}}Sp(_{F}V)$&-&Yes\\
                \cline{1-5}
                  Type II&$4$&$Sp(B,^{-})$&$O^+(B,^{-})$&Yes\\
                  \cline{2-5}
                    &$8$&$R_{F/\mathbb{Q}}Sp(L,^{-})$&-&Yes\\
                    \cline{1-5}
                      Type III&$4$&$O^+(B,^{-})$&$Sp(B,^{-})$&Yes\\
                      \cline{2-5}
                        &$8$&-&$R_{F/\mathbb{Q}}Sp(L,^{-})$&Yes\\
                        \cline{1-5}
                          Type IV&$2$&$U(B,^{-})$&$U(B,^{-})$&Yes\\
                          \cline{3-5}
                          &&$SU(B,^{-})$&$SU(B,^{-})$&No\\
                          \cline{2-5}
                            &$4$&$R_{F/\mathbb{Q}}U(B,^{-})$&$R_{F/\mathbb{Q}}U(B,^{-})$&Yes\\
                            \cline{2-5}
                              &$8$&$U_L$&$U_L$&Yes\\
                              \cline{3-5}
                              &&$SU_{L/E}$&$SU_{L/E}$&No\\
            \hline
        \end{tabular}
    \end{table}

\begin{proof}
If the endomorphism algebra $L$ is of Type I, then $L$ is either $\mathbb{Q}$, a totally real quadratic field, or a totally real field of degree $4$ over $\mathbb{Q}$. However, the case when $w$ is even and $L$ is a totally real field of degree $4$ corresponds to Totaro's exceptional case (6) and thus cannot occur.  

Similarly, if $L$ is of Type II or III, then $L$ is either a quaternion algebra over $\mathbb{Q}$ or a quaternion algebra over a real quadratic field. However, Totaro's exceptional cases (1) eliminate the possibility of $L$ being a Type II (respectively Type III) quaternion algebra over a totally real quadratic field if $w$ is even (respectively odd). 

Lastly, writing $q^2=[L:F_0]$ for the degree of $L$ over its center $F_0$, if $L$ is of Type IV and $q=2$, then Albert's classification implies $m=1$. Namely, we have $[L:\mathbb{Q}]=8$ and $L$ is a central simple algebra over the imaginary quadratic field $F_0$. But, as argued in the last paragraph of the proof of Theorem \ref{2pthm}, such an $L$ is not possible. Namely, if $L$ is of Type IV, then $L$ is a CM field. 

We thus have the following list of possibilities for $L$:
\begin{enumerate}
\item $\mathbb{Q}$ (Type I)
\item A totally real quadratic field (Type I)
\item A totally real field of degree 4 (if $w$ odd) (Type I) 
\item A quaternion algebra over $\mathbb{Q}$ (Type II/ Type III)
\item A quaternion algebra over $F$ with $[F:\mathbb{Q}]=2$ (Type II if $w$ odd/ Type III if $w$ even)
\item An imaginary quadratic field (Type IV)
\item A CM field of degree 4 (Type IV)
\item A CM field of degree 8 (Type IV).
\end{enumerate}

Using Table \ref{exphg}, when $L$ is of Type I, then the Lefschetz group of $V$ is $R_{F/\mathbb{Q}}Sp(_{F}V)$ when $w$ is odd and $R_{F/\mathbb{Q}}SO(_{F}V)$ when $w$ is even. Case (1) is then taken care of by Proposition \ref{four1}, Case (2) is taken care of by Proposition \ref{2TypeI}, and Case (3) is taken care of by Proposition \ref{rational}. From these, we conclude that when $L$ is of Type I, the Hodge group of $V$ is always equal to the Lefschetz group of $V$, except when $L=\mathbb{Q}$, in which case, the two additional groups $SL(2)\times SO(4)$ in the odd weight case, acting by the product of the standard representations, and $SO(7)$ in the even-weight case, acting by the spin representation, are also possible. 

Similarly, Case (4) is taken care of by Proposition \ref{2quat} as well as by Proposition \ref{4quat} and Case (5) is taken care of by Proposition \ref{oddquat}. From these, we conclude that when $L$ is of Type II or III, then the Hodge group of $V$ is always equal to the Lefschetz group of $V$. This leaves only the cases when $L$ is of Type IV to consider. The result of the theorem then follows from the following propositions below: Proposition \ref{ivimag}, Proposition \ref{ivdeg4}, and Proposition \ref{ivdeg8}, which address Cases (6), (7), and (8) respectively.

\begin{prop}\label{ivimag} Let $V$ be a simple polarizable $\mathbb{Q}$-Hodge structure of weight $w\ge1$ and Hodge numbers $(4,0,\ldots,0,4)$ such that the endomorphism algebra $L$ of $V$ is an imaginary quadratic field. Let $B\cong M_4(L^{\mathrm{op}})$ be the centralizer of $L$ in $\mathrm{End}_\mathbb{Q}(V)$,  let $\Sigma(L)=\{\sigma, \overline{\sigma}\}$ be the set of embeddings of $L$ into $\mathbb{C}$, and  let $n_\sigma=\dim V^{w,0}(\sigma)$ and $n_{\overline{\sigma}}=\dim V^{w,0}(\overline{\sigma})$.  Then, either $\{n_{\sigma},n_{\overline{\sigma}}\}=\{1,3\}$ or $\{n_{\sigma},n_{\overline{\sigma}}\}=\{2,2\}$ and we have:
\begin{equation*}
Hg(V)=
\begin{cases}
U(B,^{-}) &\mbox{if } \{n_{\sigma},n_{\overline{\sigma}}\}=\{1,3\}\\
SU(B,^{-}) &\mbox {if } \{n_{\sigma},n_{\overline{\sigma}}\}=\{2,2\}.
\end{cases}
\end{equation*}
\end{prop}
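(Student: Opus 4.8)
The plan is to first pin down the numerical invariants forced by the hypotheses and then split into the two admissible values of $\{n_\sigma,n_{\overline\sigma}\}$, treating $\{1,3\}$ by a direct citation and $\{2,2\}$ by a representation-theoretic argument. Since $n=4$ and $[L:\mathbb{Q}]=2$, the relation $2n=m[L:\mathbb{Q}]$ gives $m=4$, and as $L$ is an imaginary quadratic field it is its own center, so $q=1$ and $g=[F:\mathbb{Q}]=1$; hence $n_\sigma+n_{\overline\sigma}=mq=4$. The unordered pairs of nonnegative integers summing to $4$ are $\{0,4\}$, $\{1,3\}$, and $\{2,2\}$. I would rule out $\{0,4\}$ at once: it forces $\sum_{i=1}^g n_{\sigma_i}n_{\overline{\sigma}_i}=n_\sigma n_{\overline\sigma}=0$, and since $m=4\ne 1$ we are not in the exception $m=q=1$, so this is exactly exceptional case (3) (in both the odd- and even-weight lists) of Theorem \ref{totthm} and does not occur. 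This leaves precisely the two cases in the statement.

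For $\{1,3\}$ the integers $n_\sigma$ and $n_{\overline\sigma}$ are coprime, so Proposition \ref{imaginary} applies verbatim and gives $Hg(V)=U(B,^{-})$. The substantive case is $\{2,2\}$. Here I would first observe that $V$ is an $L$-Hodge structure with Hodge numbers $(2,0,\ldots,0,2)$: complex conjugation interchanges the $\sigma$- and $\overline\sigma$-eigenspaces of $L\otimes_\mathbb{Q}\mathbb{C}$ and carries $V^{w,0}$ to $V^{0,w}$, so $\dim V^{0,w}(\sigma)=\dim V^{w,0}(\overline\sigma)=n_{\overline\sigma}=2$ and likewise $\dim V^{0,w}(\overline\sigma)=n_\sigma=2$, whence every graded piece attached to each embedding has dimension $2$. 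Applying Lemma \ref{sunitary} with $E=L$, $r=2$, $J=\mathbb{Q}$, $l=4$, and $C\cong B$ then yields the inclusion $Hg(V)\subseteq SU(B,^{-})$, where $SU(B,^{-})$ is a $\mathbb{Q}$-form of $SL(4)$.

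It then remains to promote this inclusion to an equality. Writing $H=Hg(V)$, I would first show $H$ is semisimple: the center of $H$ lies in $[\mathrm{End}_\mathbb{Q}(V)]^{H}=L$, and since $H\subseteq SU(B,^{-})$ each central element $\lambda\in L$ is unitary ($\lambda\overline\lambda=1$) with trivial reduced norm, forcing $\lambda$ to be a root of unity; as $L$ contains only finitely many roots of unity, $Z(H)$ is finite, and $H$ being reductive it is therefore semisimple (note Remark \ref{ss} is unavailable since $L$ is of Type IV). Next, by Lemma \ref{irreducible} the summand $W_\sigma$ in $V_\mathbb{C}=W_\sigma\oplus W_\sigma^{*}$ is an irreducible $H_\mathbb{C}$-representation of dimension $mq=4$, and it is not self-dual, since $\mathrm{End}_{H_\mathbb{C}}(V_\mathbb{C})=L\otimes_\mathbb{Q}\mathbb{C}\cong\mathbb{C}\oplus\mathbb{C}$ has dimension $2$ and would otherwise be $M_2(\mathbb{C})$, forcing $W_\sigma\not\cong W_\sigma^{*}$. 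Consulting Lemma \ref{tablelem} together with Table \ref{selfdual}, the only way to realize a $4$-dimensional non-self-dual minuscule representation of a product of classical simple Lie algebras is the standard representation of $\mathfrak{sl}_4$ (type $A_3$): the only alternative factorization $\rho_1\boxtimes\rho_2$ into two $2$-dimensional pieces would be a product of standard $\mathfrak{sl}_2$-representations, hence self-dual. Thus there is exactly one nontrivial factor $\rho_i$ with $\mathfrak{g}_i=\mathfrak{sl}_4$, which is not of type $D_4$, and since $\Sigma(F)$ consists of a single embedding, Lemma \ref{goursat} gives $\mathrm{Lie}(H)_\mathbb{C}=\mathfrak{sl}_4=\mathrm{Lie}(SU(B,^{-}))_\mathbb{C}$; as $H\subseteq SU(B,^{-})$ and both are connected, this yields $Hg(V)=SU(B,^{-})$.

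I expect the main obstacle to be the equality (rather than mere containment) in the $\{2,2\}$ case. The two delicate points are establishing semisimplicity despite the Type IV endomorphism algebra, which is handled by the finite-center argument above, and verifying that the non-self-dual $4$-dimensional $W_\sigma$ cannot arise as a $2\times 2$ tensor product, so that $\mathfrak{sl}_4$ acting by its standard representation is the only possibility. By contrast, the $\{1,3\}$ case and the elimination of $\{0,4\}$ are essentially immediate from Proposition \ref{imaginary} and Theorem \ref{totthm}, respectively.
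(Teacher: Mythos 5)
Your proof is correct and follows essentially the same route as the paper: the paper likewise rules out $\{0,4\}$ via Totaro's exceptional case (3), handles $\{1,3\}$ by Proposition \ref{imaginary}, and handles $\{2,2\}$ by observing that $V$ is an $L$-Hodge structure with Hodge numbers $(2,0,\ldots,0,2)$. The only difference is that for the $\{2,2\}$ case the paper simply cites Proposition \ref{Ealg} (with $p=2$, $E=L$), whereas you re-derive that proposition's argument in this special case --- your chain of Lemma \ref{sunitary}, the finite-center semisimplicity argument, and the minuscule-representation classification is exactly the proof of Proposition \ref{Ealg} specialized to $p=2$.
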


\begin{proof}
Since $L$ is an imaginary quadratic field, we know $m=4$ and $q=1$, so $n_\sigma+n_{\overline{\sigma}}=4$. Totaro's exceptional case (3) implies that we cannot have $\{n_\sigma,n_{\overline{\sigma}}\}=\{0,4\}$. Therefore either we have $\{n_{\sigma},n_{\overline{\sigma}}\}=\{1,3\}$ or we have $\{n_{\sigma},n_{\overline{\sigma}}\}=\{2,2\}$. The case when $\{n_{\sigma},n_{\overline{\sigma}}\}=\{1,3\}$ is taken care of by Proposition \ref{imaginary}. In the case when $\{n_{\sigma},n_{\overline{\sigma}}\}=\{2,2\}$, then $V$ is an $L$-Hodge structure with Hodge numbers $(2,0,\ldots,0,2)$. Hence the result follows from Proposition \ref{Ealg}.
\end{proof}
\begin{prop} \label{ivdeg4}Let $V$ be a simple polarizable $\mathbb{Q}$-Hodge structure of weight $w\ge1$ and Hodge numbers $(4,0,\ldots,0,4)$ such that the endomorphism algebra $L$ of $V$ is a CM field of degree 4.  Let $F$ be  maximal totally real subfield of $L$ and let $B\cong M_2(L^{\mathrm{op}})$ be the centralizer of $L$ in $\mathrm{End}_\mathbb{Q}(V)$. Then  $Hg(V)=R_{F/\mathbb{Q}}U(B,^{-}).$
\end{prop}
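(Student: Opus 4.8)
The plan is to first pin down the Hodge numbers, then combine a surjectivity argument à la Proposition \ref{imaginary} with a Galois-descent argument and a determinant computation. First I would record the numerical constraints: here $m=2$, $q=1$, and $g=[F:\mathbb{Q}]=2$, so each of the two embeddings of $F$ gives $n_\sigma+n_{\overline{\sigma}}=2$. Totaro's (odd- and even-weight) exceptional case (3), forbidding $\sum_i n_{\sigma_i}n_{\overline{\sigma}_i}=0$ when $mq>1$, and exceptional case (4), forbidding $n_{\sigma_i}=n_{\overline{\sigma}_i}=1$ for all $i$ (Theorem \ref{totthm}), together force exactly one embedding, say $\tau_1$, to satisfy $n_{\sigma_1}=n_{\overline{\sigma}_1}=1$ and the other, $\tau_2$, to satisfy $\{n_{\sigma_2},n_{\overline{\sigma}_2}\}=\{2,0\}$ (say $n_{\sigma_2}=2$). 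In the decomposition $V_\mathbb{C}=\bigoplus_{\tau\in\Sigma(F)}(W_\tau\oplus W_\tau^*)$ of (\ref{DL1}), each $W_\tau$ has dimension $mq=2$, and by Table \ref{exphg} the Lefschetz group satisfies $Lef(V)_\mathbb{C}\cong GL(W_{\tau_1})\times GL(W_{\tau_2})$, a group of dimension $8$. Since $H:=Hg(V)\subseteq Lef(V)$ and both are connected, it suffices to prove $\dim H=8$; concretely, that $H^{\mathrm{der}}_\mathbb{C}=SL(W_{\tau_1})\times SL(W_{\tau_2})$ and that the central torus $Z(H)^0$ is two-dimensional.

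Next I would run the argument of Proposition \ref{imaginary}. Because $n_{\sigma_1}$ and $n_{\overline{\sigma}_1}$ are coprime, the image of $M:=MT(V)$ acting on $W_{\tau_1}$ meets the four hypotheses of Serre's \cite[Proposition 5]{serre}, so that image is all of $GL(W_{\tau_1})$; hence the commutator subgroup of $M_\mathbb{C}$, which coincides with $H^{\mathrm{der}}_\mathbb{C}$, surjects onto $SL(W_{\tau_1})$. Since $\mathrm{Gal}(\mathbb{C}/\mathbb{Q})$ acts transitively on the embeddings of $L$, I choose $g$ with $g\circ\sigma_1=\sigma_2$; as $M$ is defined over $\mathbb{Q}$, the operator $1\otimes g$ carries the $\sigma_1$-eigenspace $W_{\tau_1}$ onto the $\sigma_2$-eigenspace $W_{\tau_2}$ and conjugates the image of $M_\mathbb{C}$ in $GL(W_{\tau_1})$ onto its image in $GL(W_{\tau_2})$, yielding surjectivity of $H^{\mathrm{der}}_\mathbb{C}$ onto $SL(W_{\tau_2})$ as well. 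By Goursat, $H^{\mathrm{der}}_\mathbb{C}$ is then either the full product or the graph of an isomorphism $\psi\colon SL(W_{\tau_1})\xrightarrow{\sim}SL(W_{\tau_2})$. Here the non-semisimplicity of $H$ blocks a direct appeal to Lemma \ref{goursat} (distinct central characters would permit a shared simple factor), so I would instead rule out the graph using the Hodge cocharacter $\gamma=h\circ\mu$: it acts on $W_{\tau_1}$ with the two distinct eigenvalues $z^{-w},1$ but on $W_{\tau_2}$ as the scalar $z^{-w}$, whereas if $H^{\mathrm{der}}_\mathbb{C}$ were such a graph, then $\gamma$, acting non-centrally through the common $\mathfrak{sl}_2$ on $W_{\tau_1}$, would have a nonzero weight along that $\mathfrak{sl}_2$ and hence force two distinct eigenvalues on $W_{\tau_2}$ too. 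This contradiction gives $H^{\mathrm{der}}_\mathbb{C}=SL(W_{\tau_1})\times SL(W_{\tau_2})$, of dimension $6$.

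Finally I would control the centre via the determinant map $\delta\colon M\to R_{L/\mathbb{Q}}\mathbb{G}_m$ of the action on ${}_LV$. Evaluating on $\gamma$ gives $\delta\circ\gamma=(z^{-w},z^{-w},z^{-2w},1)$ in the coordinates indexed by $\sigma_1,\overline{\sigma}_1,\sigma_2,\overline{\sigma}_2$, and $\delta(M)_\mathbb{C}$ is the subtorus whose rational cocharacter space is the $\mathbb{Q}[\mathrm{Gal}]$-span of this element (as $M_\mathbb{C}$ is generated by the $\mathrm{Gal}$-conjugates of $\gamma$). Using that $\mathrm{Gal}$ acts transitively on $\Sigma(L)$ and that complex conjugation $c$ is central in the Galois group of the Galois closure of the CM field $L$, I would show this span has rank at least $3$: it contains the all-ones cocharacter (fixed by $\mathrm{Gal}$) together with the two-dimensional $c$-anti-invariant space, spanned by $e_{\sigma_2}-e_{\overline{\sigma}_2}$ and its image under any element of $\mathrm{Gal}$ interchanging the two complex-conjugation blocks $\{\sigma_1,\overline{\sigma}_1\}$ and $\{\sigma_2,\overline{\sigma}_2\}$ (such an element exists by transitivity, since centrality of $c$ makes every element respect the blocks). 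As $\delta$ kills $H^{\mathrm{der}}$ and sends the homotheties onto a one-dimensional diagonal torus, this gives $\dim Z(H)^0\ge 2$; since $Z(H)^0_\mathbb{C}$ lies in the rank-two centre of $GL(W_{\tau_1})\times GL(W_{\tau_2})$, it is exactly two-dimensional, whence $\dim H=6+2=8=\dim Lef(V)$ and $H=Lef(V)=R_{F/\mathbb{Q}}U(B,^{-})$.

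The main obstacle is this last step. Because $L$ is of Type IV the Hodge group carries a nontrivial central torus, so neither Lemma \ref{goursat} nor the ``endomorphism algebra is a field'' argument applies; both the elimination of the Goursat graph and the verification that the centre attains full rank two must instead be wrung out of the precise Hodge numbers and of the central role of complex conjugation in the CM Galois-module structure.
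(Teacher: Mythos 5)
Your proof is correct, but it reaches the conclusion by a genuinely different route than the paper. Both arguments start from the same numerical normalization forced by Totaro's exceptional cases (3) and (4) --- one conjugate pair of embeddings carries $(n_\sigma,n_{\overline\sigma})=(1,1)$ and the other $(2,0)$ --- but they diverge from there. The paper first pins down the center $Z$ of $Hg(V)$ inside the $2$-dimensional torus $U_L$: it is nontrivial because otherwise $Hg(V)\subseteq R_{F/\mathbb{Q}}SU(B,^-)$ would have centralizer a quaternion algebra rather than $L$, and it is not $1$-dimensional because Moonen--Zarhin's Lemma 7.3 would then give $Z=SU_{L/E}$ for an imaginary quadratic $E\subset L$, which by Lemma \ref{sunitary} forces balanced Hodge numbers for $E$, contradicting the $(2,0)/(1,1)$ data; with $Z=U_L$ in hand, the explicit shape of $Z_\mathbb{C}$ shows the semisimple part projects nontrivially onto each $SU$ factor, and the Goursat argument of Lemma \ref{goursat} finishes. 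You instead handle the derived group directly --- surjectivity onto each $SL(W_{\tau_i})$ via Serre's Proposition 5 and Galois conjugation (in fact irreducibility of each $2$-dimensional $W_{\tau_i}$ already forces this), and, crucially, you eliminate the Goursat graph by comparing the Hodge cocharacter's weights on the two pieces (two distinct eigenvalues on the $(1,1)$ piece versus a single scalar on the $(2,0)$ piece), which correctly substitutes for Lemma \ref{goursat} in this non-semisimple setting --- and you then control the center by a Ribet-style computation: the $\mathbb{Q}[\mathrm{Gal}]$-span of the determinant cocharacter $(1,1,2,0)$ has rank at least $3$ because centrality of complex conjugation splits off the all-ones vector together with the full $2$-dimensional anti-invariant space. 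I checked the details of both halves (the coprimality $\gcd(1,1)=1$ needed for Serre, the eigenvalue bookkeeping for the graph, and the rank-$3$ span) and they hold. The trade-off is that the paper's route leans on Moonen--Zarhin's classification of subtori of $U_L$ and the $E$-Hodge-structure criterion, whereas yours is self-contained modulo Serre's result and reuses the CM-type/Galois-span technique already deployed in Proposition \ref{imaginary} and Case (3) of Theorem \ref{primethm}; both ultimately extract everything from the same asymmetry $(2,0)$ versus $(1,1)$.
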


\begin{proof}
The following proof borrows from Moonen and Zarhin's proof \cite[7.5]{fourfold} of the analogous result for simple abelian fourfolds.  

Consider the set of embeddings $\Sigma(L)=\{\sigma_1,\overline{\sigma}_1,\sigma_2,\overline{\sigma}_2\}$ of the CM field $L$ into $\mathbb{C}$. Then, since $[L:\mathbb{Q}]=4$, we know $m=2$ and $q=1$, hence for each $i\in \{1,2\}$, we have $n_{\sigma_i} + n_{\overline{\sigma}_i}=2$.  However, the case $n_{\sigma_1}n_{\overline{\sigma}_1}+n_{\sigma_2}n_{\overline{\sigma}_2}=0$ corresponds to exceptional case (3) in Totaro's classification and the case $n_{\sigma_1}=n_{\sigma_2}=1$ corresponds to exceptional case (4). So neither of these can occur. Thus, without loss of generality, we have 
\begin{equation}\label{dimension}
(n_{\sigma_1},n_{\overline{\sigma}_1})=(2,0) \text{ and } (n_{\sigma_2},n_{\overline{\sigma}_2})=(1,1).
\end{equation}

Let $Z$ denote the center of $H$. Then since $H$ is contained in the Lefschetz group $R_{F/\mathbb{Q}}U(B,^{-})$, we must have $Z$ contained in the $2$-dimensional torus $U_L:=\ker( \mathrm{Norm}_{L}\colon R_{L/\mathbb{Q}}\mathbb{G}_m\rightarrow R_{F/\mathbb{Q}}\mathbb{G}_m)$. 

Suppose $Z$ is trivial. This implies $H\subseteq R_{F/\mathbb{Q}}SU(B,^{-}).$ However the centralizer of $R_{F/\mathbb{Q}}SU(B,^{-})$ in $\mathrm{End}_\mathbb{Q}(V)$ is a quaternion algebra over $F$, whereas the centralizer of $H$ in $\mathrm{End}_\mathbb{Q}(V)$ is $L$. Hence $Z$ must be nontrivial.

 Suppose $Z$ has dimension $1$. Then by Lemma 7.3 in \cite{fourfold} there exists an imaginary quadratic subfield $E$ of $L$ such that $Z=SU_{L/E}$. Now let $C\cong M_4(E^{\mathrm{op}})$ be the centralizer of $E$ in $\mathrm{End}_\mathbb{Q}(V)$. Then having the center $Z$ of $H$ equal to $SU_{L/E}$ implies 
$H\subseteq SU(C,^{-}).$  Let $\nu$ and $\overline{\nu}$ be the two embeddings of $E$ into $\mathbb{C}$. Then letting $n_\nu=\dim V^{w,0}(\nu)$, we have $n_\nu+n_{\overline{\nu}}=4$. By Lemma \ref{sunitary}, we must have $n_\nu=2=n_{\overline{\nu}}$. However, since $E$ is contained in $L$, the equalities in (\ref{dimension}) imply $\{n_\nu,n_{\overline{\nu}}\}=\{1,3\}$, which is a contradiction. Hence $Z$ cannot have dimension $1$ and so we have shown that $Z$ is $2$-dimensional, meaning $Z=U_L$. 

Now let $\rho_1$ and $\rho_2$ be the two embeddings of $F$ into $\mathbb{C}$.  Without loss of generality, we may assume that the two embeddings $\sigma_1,\overline{\sigma}_1\in \Sigma(L)$ both extend $\rho_1$ and that the two embeddings $\sigma_2,\overline{\sigma}_2 \in \Sigma(L)$ both extend $\rho_2$.

The action of $F\otimes _\mathbb{Q} \mathbb{C}$ on $V\otimes _\mathbb{Q}\mathbb{C}$ yields a decomposition
$$V\otimes _{\mathbb{Q}}\mathbb{C}=X_{\rho_1}\oplus X_{\rho_2},$$
where $X_{\rho_1}$ and $X_{\rho_2}$ are $4$-dimensional $\mathbb{C}$-vector spaces. 

For a fixed polarization $\langle,\rangle$ of $V$ let $\psi:V\times V\rightarrow F$ be the bilinear form such that
$\langle,\rangle=\mathrm{Tr}^F_\mathbb{Q}\circ \psi$ and let $\psi_{\rho_1}$ and $\psi_{\rho_2}$ be the restrictions of $\psi$ to to $X_{\rho_1}$ and $X_{\rho_2}$ respectively. For any $v,w\in V$ and any $f\in L$ we have
 $\psi(fv,w)=\psi(v,\overline{f}w)$. Hence, since the field $F$ is fixed under the Rosati involution on $L$, the vector spaces $X_{\rho_1}$ and $X_{\rho_2}$ are orthogonal with respect to $\psi$. So $\psi_{\rho_1}$ and $\psi_{\rho_2}$ are nondegenerate alternating bilinear forms such that
\begin{equation}\label{unitarydecomp}H_\mathbb{C}\subseteq U(X_{\rho_1}, \psi_{\rho_1})\oplus U(X_{\rho_2}, \psi_{\rho_2}).\end{equation}
Hence, if $H_\mathbb{C}^{ss}$ denotes the semisimple part of $H_\mathbb{C}$ we have
\begin{equation}\label{sspart}H_\mathbb{C}^{ss}\subseteq SU(X_{\rho_1}, \psi_{\rho_1})\oplus SU(X_{\rho_2}, \psi_{\rho_2}).\end{equation}
For $i\in \{1,2\}$ write
\begin{equation}\label{rhodecomp}X_{\rho_i}=V(\sigma_i)\oplus V(\overline{\sigma_i}),\end{equation}
where $V(\sigma_i)$ and $V(\overline{\sigma_i})$ are $2$-dimensional irreducible $H_\mathbb{C}$-modules.

Using the decompositions (\ref{unitarydecomp}) and (\ref{rhodecomp}) in combination, we may write the center $Z_\mathbb{C}$ of $H_\mathbb{C}$ as
$$Z_\mathbb{C}=\{(z_1\cdot \mathrm{Id}, -z_1\cdot \mathrm{Id}, z_2\cdot \mathrm{Id}, -z_2\cdot \mathrm{Id})\mid z_1,z_2\in \mathbb{C}\} \subset U(X_{\rho_1}, \psi_{\rho_1})\oplus U(X_{\rho_2}, \psi_{\rho_2}).$$

Because the $V(\sigma_i)$ and $V(\overline{\sigma_i})$ are $2$-dimensional irreducible $H_\mathbb{C}$-modules, the above description of $Z_\mathbb{C}$ yields that the projection of $H_\mathbb{C}^{ss}$ onto each factor $SU(X_{\rho_i}, \psi_{\rho_i})$ must be nonzero. But both of the $SU(X_{\rho_i}, \psi_{\rho_i})$ factors are simple. Thus, $H_\mathbb{C}^{ss}$ surjects onto each factor $SU(X_{\rho_i}, \psi_{\rho_i})$ in the inclusion in (\ref{sspart}). 

The argument in the proof of Lemma \ref{goursat} then yields that the inclusion in (\ref{unitarydecomp}) is in fact an equality. Since we already showed that the center of $H_\mathbb{C}$ is all of $U_L$, we have thus shown that $H$ is equal to the Lefschetz group $R_{F/\mathbb{Q}}U(B,^{-})$.
\end{proof}
The last remaining case to deal with in the proof of Theorem \ref{fourfold} is Case (8), namely the case when the endomorphism algebra is a CM field of degree $8$.

\begin{prop}\label{ivdeg8} Let $V$ be a simple polarizable $\mathbb{Q}$-Hodge structure of weight $w\ge1$ and Hodge numbers $(4,0,\ldots,0,4)$ with  endomorphism algebra $L$ a CM field of degree 8. Letting $F$ be the maximal totally real subfield of $L$, we have
\begin{equation*}
Hg(V)=
\begin{cases}
SU_{L/E} &
\mbox{if } L \text{ contains an imaginary quadratic field }E \text{ such that }
V \text{ is an }\\
&\quad E\text{-Hodge structure with Hodge numbers }(2,0,\ldots,0,2)\\
 U_L&\mbox{otherwise.}
\end{cases}
\end{equation*} 
\end{prop}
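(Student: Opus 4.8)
The plan is to exploit that, for $L$ a CM field of degree $8$, we have $m=q=1$ (the center $F_0$ equals $L$, so $q^2=[L:F_0]=1$, and $8=2n=m[L:\mathbb{Q}]$ forces $m=1$), whence $\dim_L V = mq = 1$ and the Lefschetz group is the torus $U_L$. Consequently $Hg(V)$ is a connected subgroup of a torus, hence itself a subtorus of $U_L$, so $\dim Hg(V)=\mathrm{rank}\,Hg(V)$. Since $L=\mathrm{End}_{\mathbb{Q}-HS}(V)$ is commutative, Lemma \ref{bound} gives $\mathrm{rank}\,Hg(V)\ge \log_2 8 = 3$, while $\dim U_L=[F:\mathbb{Q}]=4$. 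Thus $\dim Hg(V)\in\{3,4\}$, and the whole proposition reduces to deciding which value occurs and, in the codimension-one case, identifying $Hg(V)$ with a norm-one subtorus $SU_{L/E}$.

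First I would dispose of the case where an imaginary quadratic $E\subset L$ exists with $V$ an $E$-Hodge structure of Hodge numbers $(2,0,\ldots,0,2)$. Then $[L:\mathbb{Q}]=8=2\cdot 2\cdot[E:\mathbb{Q}]$, so Proposition \ref{contained} gives $Hg(V)\subseteq SU_{L/E}$. As $E$ is imaginary quadratic its maximal totally real subfield is $\mathbb{Q}$, so $SU_{L/E}$ has codimension $1$ in $U_L$, i.e. dimension $3$. Combining $\dim Hg(V)\ge 3$, the inclusion $Hg(V)\subseteq SU_{L/E}$, and connectedness of both tori forces $Hg(V)=SU_{L/E}$.

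The main work, and the main obstacle, is the converse: if $Hg(V)\ne U_L$ then such an $E$ must exist, so that the ``otherwise'' branch genuinely yields $Hg(V)=U_L$. Here $\dim Hg(V)=3$, so $Hg(V)$ is a codimension-one subtorus of $U_L$, cut out by a primitive character $\chi$ generating the rank-one kernel $K=\ker\bigl(X^*(U_L)\to X^*(Hg(V))\bigr)$. Because $Hg(V)$ is defined over $\mathbb{Q}$, the line $K$ is stable under $G=\mathrm{Gal}(\tilde L/\mathbb{Q})$ acting on the character lattice (with $\tilde L$ the Galois closure of $L$), so $G$ acts on $K$ through a quadratic character $\psi\colon G\to\{\pm 1\}$. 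I would then use the identification $X^*(U_L)\otimes\mathbb{Q}\cong \mathbb{Q}[\Sigma(L)]/(\sigma+\overline\sigma)$, which is the $(-1)$-eigenspace of complex conjugation $c$ inside the permutation module $\mathrm{Ind}_{\mathrm{Gal}(\tilde L/L)}^{G}\mathbb{Q}$. Since $\chi$ lies in this eigenspace, $\psi(c)=-1$; since $\psi$ occurs in the induced module it is trivial on $\mathrm{Gal}(\tilde L/L)$; hence the fixed field $E$ of $\ker\psi$ is a quadratic subfield of $L$ on which $c$ acts nontrivially, that is, an imaginary quadratic subfield $E\subset L$. Moreover $\psi$ occurs with multiplicity one in the permutation module, so its isotypic line is spanned by the explicit character $\chi_E=\sum_{\sigma|_E=\nu}\sigma-\sum_{\sigma|_E=\overline\nu}\sigma$, which is exactly the character whose kernel is $SU_{L/E}$. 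Thus $\chi$ is proportional to $\chi_E$ and $Hg(V)=(\ker\chi)^0=SU_{L/E}$; applying Lemma \ref{sunitary} then shows $V$ is an $E$-Hodge structure with Hodge numbers $(2,0,\ldots,0,2)$, contradicting the standing assumption of the ``otherwise'' branch, so $Hg(V)=U_L$. The delicate points to verify carefully are the eigenspace/self-duality bookkeeping relating $X^*(U_L)$ to the permutation module, the multiplicity-one statement pinning $\chi$ down to $\chi_E$, and the computation $\ker\chi_E=SU_{L/E}$ via the norm $\mathrm{Norm}_{L/E}\colon U_L\to U_E$.
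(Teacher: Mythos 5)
Your proof is correct, and its skeleton coincides with the paper's: identify $Lef(V)=U_L$ of dimension $4$ from Table \ref{exphg}, use Lemma \ref{bound} to force $\dim Hg(V)\in\{3,4\}$, handle the case where a suitable $E$ exists via Proposition \ref{contained}, and in the $3$-dimensional case recognize $Hg(V)$ as $SU_{L/E}$ and then invoke Lemma \ref{sunitary} to recover the $E$-Hodge-structure condition. The one genuine difference is the key step ``every $\mathbb{Q}$-rational codimension-one subtorus of $U_L$ equals $SU_{L/E}$ for some imaginary quadratic $E\subset L$'': the paper outsources this to Lemma 7.3 of Moonen--Zarhin \cite{fourfold}, whereas you prove it from scratch by passing to character lattices, noting that the rank-one kernel $K$ carries a quadratic Galois character $\psi$ with $\psi(c)=-1$, applying Frobenius reciprocity to see $\psi$ is trivial on $\mathrm{Gal}(\tilde L/L)$ and occurs with multiplicity one in $\mathbb{Q}[\Sigma(L)]$, and identifying the resulting isotypic line with the character cutting out $SU_{L/E}$. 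Your argument checks out (it uses implicitly that complex conjugation is central in $\mathrm{Gal}(\tilde L/\mathbb{Q})$ for a CM field, which is standard and needed to split off the $(-1)$-eigenspace as a Galois submodule), and it buys a self-contained proof of the classification of such subtori that works for CM fields of arbitrary degree, at the cost of some lattice-theoretic bookkeeping that the paper avoids by citation.
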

\begin{proof}
Let $H=Hg(V)$ be the Hodge group of $V$. By Table \ref{exphg}, the Lefschetz group of $V$ is the $4$-dimensional torus $U_L$. By Lemma \ref{bound}, the rank of $H$ is greater than or equal to $\mathrm{log}_2(8)=3$. So either $H$ is a $3$-dimensional subtorus of $U_L$ or $H$ is all of $U_L$. 

Suppose $L$ contains an imaginary quadratic field $E$ such that $V$ is an $E$-Hodge structure with Hodge numbers $(2,0,\ldots,0,2)$. Then by Proposition \ref{contained}, the group $H$ is contained in the $3$-dimensional torus $SU_{L/E}$. Hence in this case $H=SU_{L/E}$. 

Conversely, if $H$ is $3$-dimensional, then by Lemma 7.3 in \cite{fourfold}, there exists an imaginary quadratic field $E$ in $L$ such that $H=SU_{L/E}$.   Moreover, by Lemma \ref{sunitary} this means $V$ must be an $E$-Hodge structure with Hodge numbers $(2,0,\ldots,0,2)$.

Hence, if $L$ contains no such field $E$, then $H$ must be $4$-dimensional and hence $H$ must be equal to the Lefschetz group $U_L$.
\end{proof}

Proposition \ref{ivimag}, Proposition \ref{ivdeg4}, and Proposition \ref{ivdeg8} thus indeed verify the statement of Theorem \ref{fourfold} in Cases (6), (7), and (8) respectively. Since we have previously confirmed the statement of Theorem \ref{fourfold} in Cases (1)-(5), this completes the proof of Theorem \ref{fourfold}.

\end{proof}


\subsection{Hodge Numbers $(2p,0,\ldots,0,2p)$}

\begin{thm}\label{2phodge}
Let $V$ be a simple polarizable $\mathbb{Q}$-Hodge structure of weight $w\ge1$ with Hodge numbers $(2p,0,\ldots, 0, 2p)$, where $p$ is an odd prime. If the endomorphism algebra $L$ of $V$ is of Type I, II, or III, then 
\[Hg(V)=Lef(V).\]

However, if $L$ is of Type IV and $L$ contains an imaginary quadratic field $E$ such that $V$ is an $E$-Hodge structure with Hodge numbers $(p,0,\ldots,0,p)$, then
\begin{equation*}
Hg(V)=
\begin{cases}
R_{F/\mathbb{Q}}SU(B,^-) &\mbox{if } [L:\mathbb{Q}]\ne 4p\\
SU_{L/E} &\mbox{if } [L:\mathbb{Q}]=4p \text{ and  } L/\mathbb{Q} \text{ is a Galois extension.}
\end{cases}
\end{equation*}
\end{thm}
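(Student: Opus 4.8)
The plan is to dispatch the Type IV assertion by a direct appeal to an earlier result, and to settle Types I, II, and III by enumerating the finitely many admissible endomorphism algebras and quoting the relevant propositions. First, I observe that the Type IV statement is \emph{exactly} Theorem \ref{2pthm}: if $L$ is of Type IV and contains an imaginary quadratic field $E$ for which $V$ is an $E$-Hodge structure with Hodge numbers $(p,0,\ldots,0,p)$, then $V$ satisfies the hypotheses of Theorem \ref{2pthm} with the prime taken to be our odd prime $p$, and the two displayed case distinctions coincide (the clause ``$p=2$'' appearing in Theorem \ref{2pthm} being vacuous here). So no further argument is needed for Type IV.

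For Types I, II, and III we have $F_0=F$ totally real and $n=2p$, so by Totaro's bounds (Theorem \ref{totthm}) $[F:\mathbb{Q}]$ divides $2p$ and $[L:\mathbb{Q}]$ divides $4p$. I would enumerate the possibilities as follows. In Type I, where $L=F$, we get $[F:\mathbb{Q}]\in\{1,2,p,2p\}$, hence $l:=n/[L:\mathbb{Q}]\in\{2p,p,2,1\}$; these are governed by Proposition \ref{twiceodd} (the case $l=2p$, i.e.\ $L=\mathbb{Q}$, since $n=2p$ is twice an odd number), Proposition \ref{rational} (the case $l=p$ odd, together with the odd-weight subcase $l=1$), and Proposition \ref{2TypeI} (the case $l=2$), while the even-weight case $l=1$ is precisely Totaro's exceptional case $(6)$ and does not occur. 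In Types II and III, $[L:\mathbb{Q}]=4[F:\mathbb{Q}]\mid 4p$ forces $[F:\mathbb{Q}]\in\{1,p\}$, hence $m:=2n/[L:\mathbb{Q}]\in\{p,1\}$, both odd; these all fall under Proposition \ref{oddquat}, with the two combinations that would contradict the claimed conclusion (even-weight Type II and odd-weight Type III, each with $m=1$) being exactly Totaro's exceptional cases, so they do not arise.

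In each such application the cited proposition either yields $Hg(V)=Lef(V)$ outright, or presents a dichotomy between $Lef(V)$ and an exotic group of the form $R_{F/\mathbb{Q}}SU(2^k)$ that is available only when $2n=\binom{2^k}{2^{k-1}}$ (Proposition \ref{twiceodd}), $2l=\binom{2^k}{2^{k-1}}$ (Proposition \ref{rational}), or $2m=\binom{2^k}{2^{k-1}}$ (Proposition \ref{oddquat}). The crux is therefore to exclude the exotic alternative, i.e.\ to show that none of $4p$, $2p$ equals $\binom{2^k}{2^{k-1}}$ for $k\ge3$. This is the combinatorial input already established in the proof of Theorem \ref{primethm}: for $k\ge3$ the binomial coefficient $\binom{2^k}{2^{k-1}}$ has $2$-adic valuation exactly $1$, and by the prime-counting estimate $\pi(2^k)-\pi(2^{k-1})\ge2$ its odd part is divisible by at least two distinct (odd) primes, namely the primes lying strictly between $2^{k-1}$ and $2^k$. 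Since the only odd prime factor of each of $2p$ and $4p$ is $p$, neither can equal $\binom{2^k}{2^{k-1}}$, so the exotic groups never occur and we conclude $Hg(V)=Lef(V)$ throughout Types I, II, and III.

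The only step that demands genuine care---the main obstacle---is the bookkeeping of Totaro's exceptional cases: I must check that every admissible pair $(L,\ \text{weight parity})$ is covered by exactly one of the enumerated propositions, and that the configurations deliberately left uncovered (even-weight Type I with $l=1$; even-weight Type II and odd-weight Type III with $m=1$) are \emph{precisely} the excluded exceptional cases, so that no admissible configuration slips through the net. Once this case analysis is verified, the remainder is a sequence of direct citations to Propositions \ref{twiceodd}, \ref{rational}, \ref{2TypeI}, and \ref{oddquat}, to Theorem \ref{2pthm}, and to the binomial exclusion drawn from the proof of Theorem \ref{primethm}.
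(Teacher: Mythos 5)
Your proposal is correct and follows essentially the same route as the paper: the Type IV clause is deduced directly from Theorem \ref{2pthm}, and Types I--III are handled by the same enumeration of admissible endomorphism algebras via Totaro's bounds and exceptional cases, dispatched by Propositions \ref{twiceodd}, \ref{rational}, \ref{2TypeI}, and \ref{oddquat}, with the exotic $R_{F/\mathbb{Q}}SU(2^k)$ alternatives ruled out by the same $2$-adic valuation and prime-counting argument from the proof of Theorem \ref{primethm}. The bookkeeping of which configurations fall into Totaro's exceptional cases matches the paper's case analysis exactly.
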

\begin{proof}
If $L$ is of Type I, II, or III, we use Albert's and Totaro's classifications to obtain a list of possibilities for $L$.  As in previous proofs, the case when $w$ is even and $L$ is a totally real field of degree $2p$ corresponds to Totaro's exceptional case (6) and thus cannot occur.  Additionally, Totaro's exceptional cases (1) eliminate the possibility of $L$ being a Type II (respectively Type III) quaternion algebra over a totally real field of degree $p$ over $\mathbb{Q}$ if $w$ is even (respectively odd). 

If $L$ is of Type IV and satisfies the assumptions of the theorem, then by Theorem \ref{2pthm}, we know that $L$ must be a CM field. 

We thus have the following list of possibilities for $L$:
\begin{enumerate}
\item $\mathbb{Q}$ (Type I)
\item A real quadratic field (Type I)
\item A totally real field of degree $p$ (Type I)
\item A totally real field of degree $2p$ (if $w$ odd) (Type I)
\item A quaternion algebra over $\mathbb{Q}$ (Type II/ Type III)
\item A quaternion algebra over $F$, where $[F:\mathbb{Q}]=p$ (Type II if $w$ odd/ Type III if $w$ even)
\item A CM field of degree $2$ (Type IV)
\item A CM field of degree $4$ (Type IV)
\item A CM field of degree $2p$ (Type IV)
\item A CM field of degree $4p$ (Type IV).
\end{enumerate}

First consider the cases when $L$ is of Type I. Then by Table \ref{exphg}, the Lefschetz group of $V$ is $R_{F/\mathbb{Q}}Sp(_{F}V)$, when $w$ is odd, and $R_{F/\mathbb{Q}}SO(_{F}V)$, when $w$ is even. So consider Case (1). By Proposition \ref{twiceodd}, the Hodge group $H=Hg(V)$ is either equal to $Lef(V)$ or, when $w$ is even, we may also have $R_{L/\mathbb{Q}}SU(2^k)$, where $4p={2^k\choose 2^{k-1}}$ for some $k\ge 3$. However, by a combinatorial argument similar to the one used in the proof of Theorem \ref{primethm}, this latter case is impossible. 

In Case (2), Proposition \ref{rational} yields that $H$ is either equal to $Lef(V)$ or, when $w$ is even, we may also have $R_{L/\mathbb{Q}}SU(2^k)$, where $2p={2^k\choose 2^{k-1}}$ for some $k\ge 3$. However, as verified in the proof of Theorem \ref{primethm}, this latter case is impossible. Case (3) is taken care of by Proposition \ref{2TypeI}. Case (4) is taken care of by Proposition \ref{rational} since in this case $\frac{n}{[L:\mathbb{Q}]}=1$ and thus in both even and odd weights the only possibility for $H$ is $Lef(V)$. This finishes the cases for $L$ of Type I. 

For $L$ of Type II or Type III, referring to Table \ref{exphg}, Case (5) is taken care of by Proposition \ref{oddquat}, again using that $2p$ cannot be of the form ${2^k\choose 2^{k-1}}$ for $k\ge 3$. Case (6) is also taken care of by Proposition \ref{oddquat}.

When $L$ is of Type IV, Table \ref{exphg} yields that the Lefschetz group is $R_{F/\mathbb{Q}}U(B,^{-})$. Namely, under the hypotheses of the theorem, the predicted Hodge group in the cases when $L$ is of Type IV is strictly smaller than the Lefschetz group. The statement of the theorem for these Type IV endomorphism algebra cases, meaning Cases (7) through (10), follows from Theorem \ref{2pthm}.
\end{proof}

\section{Applications to the Hodge Conjecture for Abelian Varieties}\label{hgconimp}

In Section \ref{mainresults}, we determined the possible Hodge groups of simple polarizable $\mathbb{Q}$-Hodge structures with Hodge numbers $(n,0,\ldots,0,n)$ when $n$ was equal to $1$, a prime $p$, $4$ and $2p$. The results for $n=1$, $p$, and $4$ are generalizations of previous results in  \cite{ribet1}, \cite{tankeevrib}, and  \cite{fourfold} about the possible Hodge groups of simple $n$-dimensional abelian varieties. However, the results in Section \ref{mainresults} about the Hodge groups when $n$ is equal $2p$ are new. 

Since, by Remark \ref{equiv}, there is a polarization-preserving equivalence of categories between the category of $\mathbb{Q}$-Hodge structures of  odd weight and Hodge numbers $(n,0\ldots,0,n)$, and the category of complex abelian varieties of dimension $n$, it is natural to ask about the implications of Theorem \ref{2phodge} for complex abelian varieties. In particular, it is natural to wonder about the implications in terms of both the Hodge Conjecture and the General Hodge conjecture for these simple complex abelian varieties of dimension $2p$. 

In order to simplify notation, in the case of an abelian variety $A$, we will denote by $Hg(A)$ and $Lef(A)$ the Hodge and Lefschetz groups respectively of the $\mathbb{Q}$-Hodge structure $V=H^1(A,\mathbb{Q})$. 

If $A$ has dimension $n$, we also introduce the notation $W(A)$ to denote the set of CM fields $E$ such that $V$ is an $E$-Hodge structure with Hodge numbers $(\frac{n}{[E:\mathbb{Q}]},0,\ldots,0,\frac{n}{[E:\mathbb{Q}]})$.


\begin{cor}\label{cor1}Let $A$ be a simple complex abelian variety of dimension $2p$, where $p$ is an odd prime. Suppose the endomorphism algebra $L$ of the $\mathbb{Q}$-Hodge structure $V=H^1(A,\mathbb{Q})$ satisfies either
\begin{enumerate}
\item $L$ is of Type I, II, or III
\item $L$ is of Type IV, there exists an imaginary quadratic field $E\in W(A)$, and $[L:\mathbb{Q}]\ne 4p$.
\end{enumerate}
Then,  if the Hodge conjecture is true for all powers of $A$, then the General Hodge Conjecture is true for all powers of $A$. 
\end{cor}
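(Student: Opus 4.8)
The plan is to combine the explicit determination of the Hodge group in Theorem \ref{2phodge} with the standard reduction of the General Hodge Conjecture to the ordinary Hodge Conjecture for abelian varieties whose Hodge group is well understood. Throughout, I write $V = H^1(A,\mathbb{Q})$ and $H = Hg(A) = Hg(V)$, and note that since $\dim A = 2p$ the weight-$1$ Hodge structure $V$ has Hodge numbers $(2p,2p)$, so Theorem \ref{2phodge} applies directly. First I would record what that theorem gives: in case (1) the Hodge group is the full Lefschetz group $Lef(A)$, while in case (2) it is the unitary group $R_{F/\mathbb{Q}}SU(B,^{-})$. Isolating these is useful because in either case $H$ is a reductive $\mathbb{Q}$-group acting on $V$, hence on each $H^w(A^k,\mathbb{Q}) \subseteq \bigwedge^{\bullet}(V^{\oplus k})$, through representations whose irreducible constituents are built from the minuscule and standard representations classified in Lemma \ref{tablelem}.

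By Remark \ref{basicprop}, a rational subspace $W \subseteq H^w(A^k,\mathbb{Q})$ is a sub-Hodge structure exactly when it is $H$-stable, so the decomposition of $H^w(A^k,\mathbb{Q})$ into $H$-isotypic pieces controls every sub-Hodge structure occurring in any power of $A$. The core of the argument is then the reduction step. Given a rational sub-Hodge structure $W \subseteq H^w(A^k,\mathbb{Q})$ with $l(W) = w - 2c$, I would first use the Hodge Conjecture for the power $A^{2k} = A^k \times A^k$ to produce an algebraic self-correspondence of $A^k$ representing the polarization-induced orthogonal projector onto $W$, which is a morphism of Hodge structures and hence a Hodge class in $\mathrm{End}(H^w(A^k,\mathbb{Q})) \subseteq H^{\bullet}(A^{2k},\mathbb{Q})$.

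The remaining and essential task is to upgrade this algebraic projector onto a coniveau-$c$ sub-Hodge structure into an actual codimension-$c$ supporting subvariety, i.e.\ to show that the Hodge-theoretic coniveau of $W$ is realized geometrically. Here I would invoke the reduction theorem of Abdulali, valid for abelian varieties whose sub-Hodge structures of positive coniveau are, up to Tate twist and isogeny, accounted for by the first cohomology of abelian subvarieties; the hypothesis of that theorem is precisely a nondegeneracy condition on the Hodge structures of all powers of $A$. I would verify this condition from the explicit form of $H$ together with Lemma \ref{tablelem}: because $H$ acts through minuscule representations, the coniveau of each irreducible constituent is forced to its expected value, so no sub-Hodge structure of unexpectedly large coniveau can appear.

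The main obstacle is the Type IV case (2), where $H = R_{F/\mathbb{Q}}SU(B,^{-})$ is strictly smaller than the Lefschetz group and $A$ is no longer of Lefschetz type. In this regime the cohomology of powers of $A$ carries Weil classes (this is the content behind Corollary \ref{hgcon}), so one cannot simply quote the Lefschetz-type reduction; instead one must check that the unitary Hodge group, acting through the standard representation and its exterior powers, still produces only sub-Hodge structures whose coniveau is geometrically realizable, using the coprimality and $E$-Hodge-structure hypotheses built into Theorem \ref{2phodge} that force $[L:\mathbb{Q}] \ne 4p$ and pin down $H$. Establishing this nondegeneracy for the unitary case is the delicate point; once it is in hand, Abdulali's reduction shows that the Hodge Conjecture for all powers of $A$ forces the General Hodge Conjecture for all powers of $A$.
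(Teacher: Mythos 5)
You have correctly located the two ingredients --- the computation of $Hg(A)$ from Theorem \ref{2phodge} and a reduction theorem of Abdulali --- but the way you propose to combine them leaves the actual work undone. The paper's proof is much shorter and rests on a single observation you never make: in both cases of the corollary, $Hg(A)$ equals the semisimple part $Lef(A)^{ss}$ of the Lefschetz group. In case (1) this is because $Hg(A)=Lef(A)$ and the Lefschetz group is already semisimple for Types I--III; in case (2) it is because $R_{F/\mathbb{Q}}SU(B,^{-})$ is precisely the derived (semisimple) part of the Lefschetz group $R_{F/\mathbb{Q}}U(B,^{-})$. This identity, together with a second condition --- if $L$ is of Type III then $2\dim A/[L:\mathbb{Q}]$ is odd, which here follows from Shimura's classification forcing a Type III endomorphism algebra of a simple abelian variety of dimension $2p$ to be a quaternion algebra over $\mathbb{Q}$, so that $2\dim A/[L:\mathbb{Q}]=p$ --- are exactly the hypotheses of Abdulali's Theorem 5.1, and the corollary follows by quotation.

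Your version has genuine gaps. First, you describe Abdulali's hypothesis as a ``nondegeneracy condition'' on sub-Hodge structures of positive coniveau and propose to verify it by arguing that minuscule representations force each irreducible constituent to have its expected coniveau; that is not a valid inference (minusculeness of the constituents does not by itself control coniveau --- Mumford-type abelian fourfolds have minuscule constituents and anomalous Hodge-theoretic behavior), and in any case it is not the hypothesis that needs checking. Second, you explicitly defer the Type IV case as ``the delicate point'' without resolving it, whereas with the correct formulation it is immediate from the identification of $R_{F/\mathbb{Q}}SU(B,^{-})$ with $Lef(A)^{ss}$. Third, you omit the Type III parity condition entirely. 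Finally, the intermediate step you insert --- using the Hodge Conjecture on $A^{2k}$ to realize the projector onto $W$ as an algebraic correspondence --- is not needed once Abdulali's theorem is invoked as a black box, and it does not substitute for verifying that theorem's hypotheses.
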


\begin{proof}
By Theorem \ref{2phodge}, the Hodge group $Hg(A)$ is semisimple and is equal to the semisimple part $Lef(A)^{ss}$ of the Lefschetz group of $A$. Moreover, when $L$ is of Type III, then by Shimura's classification of the possible endomorphism algebras of a simple abelian variety \cite[Theorem 5]{shimura}, we must have $L$ a quaternion algebra over $\mathbb{Q}$. Namely $\frac{4p}{[L:\mathbb{Q}]}$ is equal to $p$, where $p$ is odd. So $A$ satisfies the following two conditions:
\begin{enumerate}
\item $Hg(A)=Lef(A)^{ss}$
\item If $L$ is of Type III, then $\frac{2\dim A}{[L:\mathbb{Q}]}$ is odd.
\end{enumerate}
These conditions are exactly the hypotheses of a result of Abdulali \cite[Theorem 5.1]{abdulali1}, which then shows that under the above circumstances, the Hodge Conjecture for all powers of $A$ implies the General Hodge Conjecture for all powers of $A$. 
\end{proof}


For any $E\in W(A)$, let $J$ be the maximal totally real subfield of $E$ and let $C$ be the the centralizer of $E$ in $\mathrm{End}_\mathbb{Q}(V)$. The Hodge structure $V$ may be viewed as an $E$-vector space, say of dimension $l$.  Thus define
$$W_E=\bigwedge^{l}_E V.$$
Since $V$ is an $E$-Hodge structure, a result of Moonen and Zarhin \cite[Section 6]{mz2} shows that $W_E$ consists entirely of Hodge classes. The elements of $W_E$ are called \emph{Weil classes}.

We now introduce the group $M(A)$, originally defined by Murty in \cite[Section 3.6.4]{murty}.  
\begin{defn} For a simple complex abelian variety $A$, let the \emph{Murty group} $M(A)$ be given by
$$M(A)=Lef(A)\bigcap \left( \bigcap_{E\in W(A)} R_{J/\mathbb{Q}}SU(C,^{-})\right).$$
\end{defn}

Thus for any simple complex abelian variety $A$, we have:
\begin{equation}\label{mcontain}Hg(A)\subseteq M(A)\subseteq Lef(A).\end{equation}
In \cite{murty}, Murty proves the following property about the Murty group:
\begin{prop}\cite[Proposition 3.8]{murty}\label{murtyprop} For a complex abelian variety $A$, the Hodge ring
$$\mathcal{B}^{\bullet}(A^k)=\bigoplus_{l\ge0} \left(H^{2l}(A^k,\mathbb{Q})\cap H^{l,l}\right)$$
is generated by divisors and Weil classes for all $k\ge1$ if and only if 
$Hg(A)=M(A).$
\end{prop}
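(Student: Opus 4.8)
The plan is to translate both sides of the biconditional into statements about rings of invariants of reductive $\mathbb{Q}$-groups, and then to use that a reductive subgroup is pinned down by its invariants in all tensor spaces. Write $V=H^1(A,\mathbb{Q})$, so that $H^\bullet(A^k,\mathbb{Q})=\bigwedge^\bullet(V^{\oplus k})$. By Remark \ref{basicprop}, for a fixed $k$ the Hodge classes in $\mathcal{B}^\bullet(A^k)$ are the $Hg(A)$-invariants in $\bigwedge^\bullet(V^{\oplus k})$; letting $k$ vary and using the polarization to identify $V$ with $V^*$, every tensor space $T^{a,b}:=V^{\otimes a}\otimes (V^*)^{\otimes b}$ occurs as a direct summand of some $\bigwedge^\bullet(V^{\oplus k})$. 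Thus the total Hodge ring $\bigoplus_k \mathcal{B}^\bullet(A^k)$ is canonically identified with the ring $\bigoplus_{a,b}(T^{a,b})^{Hg(A)}$ of $Hg(A)$-invariant tensors.

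First I would identify the subring generated by divisor classes with the ring of $Lef(A)$-invariants. By the Lefschetz $(1,1)$-theorem the degree-$2$ Hodge classes are exactly the divisor classes, and on $A^k$ these are spanned by the polarization form and by the classes coming from the endomorphism algebra $L$; since $Lef(A)$ is by definition the connected centralizer of $L$ in $Sp(V)$ or $SO(V)$, all of these are $Lef(A)$-invariant. The first fundamental theorem of invariant theory for the relevant classical $\mathbb{Q}$-groups, together with the geometric identification of its basic invariants with divisor classes, then shows that the divisors generate the full ring $\bigoplus_{a,b}(T^{a,b})^{Lef(A)}$.

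Next I would bring in the Weil classes. For $E\in W(A)$ the class $W_E=\bigwedge^{l}_E V$ is a Hodge class \cite{mz2}, and as in Lemma \ref{sunitary} an element $g\in R_{J/\mathbb{Q}}U(C,^{-})$ acts on $W_E$ as multiplication by $\mathrm{Norm}_E(g)$; hence $g$ fixes $W_E$ exactly when $g\in R_{J/\mathbb{Q}}SU(C,^{-})$. Consequently the subring generated by the divisor classes together with all the Weil classes $\{W_E:E\in W(A)\}$ has invariance group precisely
\[
Lef(A)\cap\Bigl(\bigcap_{E\in W(A)} R_{J/\mathbb{Q}}SU(C,^{-})\Bigr)=M(A),
\]
and a further application of the first fundamental theorem (now comparing the $SU$-forms with the ambient $U$-forms, where the Weil classes supply exactly the missing norm semi-invariants) shows that divisors and Weil classes generate precisely $\bigoplus_{a,b}(T^{a,b})^{M(A)}$.

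It then remains to compare the two invariant rings. Both $Hg(A)$ and $M(A)$ are connected reductive $\mathbb{Q}$-groups acting faithfully on $V$: the former by Remark \ref{ss} and the latter because $M(A)$ is the kernel of the norm characters of $Lef(A)$ attached to the Weil classes, and the kernel of characters of a reductive group is again reductive. By (\ref{mcontain}) we have $Hg(A)\subseteq M(A)$, and every irreducible representation of $M(A)$ appears in some $T^{a,b}$. Hence the hypothesis that $\mathcal{B}^\bullet(A^k)$ is generated by divisors and Weil classes for all $k$ is equivalent to $(T^{a,b})^{Hg(A)}=(T^{a,b})^{M(A)}$ for all $a,b$, i.e.\ to $W^{Hg(A)}=W^{M(A)}$ for every representation $W$ of $M(A)$. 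Decomposing the coordinate ring $\mathcal{O}(M(A)/Hg(A))=\mathcal{O}(M(A))^{Hg(A)}=\bigoplus_{W}W\otimes (W^{*})^{Hg(A)}$ over the irreducibles $W$ of $M(A)$, this equality forces $(W^{*})^{Hg(A)}=(W^{*})^{M(A)}=0$ for every nontrivial irreducible $W$, so $M(A)/Hg(A)$ is a single point and $Hg(A)=M(A)$; the converse implication is immediate from the identifications above. The main obstacle is the invariant-theoretic core — proving that divisor classes, and then divisors together with Weil classes, generate the complete rings of $Lef(A)$- and $M(A)$-invariants across the Type I through Type IV cases — which is exactly where the substance of Murty's argument lies.
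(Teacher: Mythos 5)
The paper does not prove this proposition; it is quoted verbatim from Murty, so there is no in-paper argument to compare against. Your reduction is nonetheless the right skeleton, and it is in fact the shape of Murty's own argument: identify the Hodge classes on all powers with the $Hg(A)$-invariants in the tensor spaces $T^{a,b}$, identify the subring generated by divisors (respectively, by divisors and Weil classes) with the $Lef(A)$-invariants (respectively, the $M(A)$-invariants), and then use reductivity plus the algebraic Peter--Weyl decomposition of $\mathcal{O}(M(A)/Hg(A))$ to convert equality of invariant rings into equality of groups. That closing group-theoretic step is correct as you state it, and you are also right that the ``only if'' direction is cheaper: since divisors and Weil classes are visibly $M(A)$-invariant and $Hg(A)\subseteq M(A)$, generation already forces the two invariant rings to coincide without any fundamental theorem of invariant theory.

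The genuine gap is the one you name yourself: the two assertions that (i) the full ring $\bigoplus_{a,b}(T^{a,b})^{Lef(A)}$ is generated by divisor classes and (ii) adjoining the Weil classes $W_E$ for $E\in W(A)$ produces exactly $\bigoplus_{a,b}(T^{a,b})^{M(A)}$ are not consequences of a single off-the-shelf ``first fundamental theorem''; they require working through the Albert types, decomposing $V$ over the center of $L$, and matching the basic invariants of the relevant forms of $Sp$, $O$, and $U$ (and the norm semi-invariants separating $U$ from $SU$) with actual divisor and Weil classes --- this is the content of Murty's theorem, not a routine verification. As a proof of the cited proposition your text is therefore an accurate road map with the destination left unreached; as a justification for invoking \cite[Proposition 3.8]{murty} in this paper, it correctly isolates where the substance lies. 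One further small point worth making explicit if you flesh this out: $M(A)$ need not be connected a priori, but your argument via $\dim\mathcal{O}(M(A)/Hg(A))=1$ does not require connectedness, and the reductivity of $M(A)$ (as the kernel of norm characters on the reductive group $Lef(A)$) should be recorded as a lemma rather than asserted in passing.
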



\begin{cor}\label{hgcon} Let $A$ be a simple complex abelian variety of dimension $2p$, where $p$ is an odd prime. Suppose the endomorphism algebra $L$ of the $\mathbb{Q}$-Hodge structure $V=H^1(A,\mathbb{Q})$ satisfies either
\begin{enumerate}
\item $L$ is of Type I, II, or III
\item $L$ is of Type IV, there exists an imaginary quadratic field $E\in W(A)$, and if $[L:\mathbb{Q}]=4p$, then $L/\mathbb{Q}$ is Galois.
\end{enumerate}
Then for every $k\ge 1$, the Hodge ring $\mathcal{B}^{\bullet}(A^k)=\bigoplus_{l\ge0} \left(H^{2l}(A^k,\mathbb{Q})\cap H^{l,l}\right)$ is generated by divisors and Weil classes. \end{cor}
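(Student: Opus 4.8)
The plan is to invoke Murty's criterion, Proposition \ref{murtyprop}: once I show that $Hg(A) = M(A)$, the stated description of the Hodge ring $\mathcal{B}^{\bullet}(A^k)$ follows at once. Throughout I will use the sandwich $Hg(A)\subseteq M(A)\subseteq Lef(A)$ of (\ref{mcontain}), together with the fact that $V = H^1(A,\mathbb{Q})$ is a polarizable $\mathbb{Q}$-Hodge structure of weight one, hence of odd weight, with Hodge numbers $(2p,2p)$. Thus Theorem \ref{2phodge} applies and computes $Hg(A)$ in each of the two cases of the hypothesis, and it remains only to identify this computed group with $M(A)$.

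When $L$ is of Type I, II, or III, Theorem \ref{2phodge} gives $Hg(A) = Lef(A)$. The sandwich then forces $Hg(A) = M(A) = Lef(A)$, and this holds whether or not $W(A)$ is empty, so case (1) needs no further argument. The content is therefore case (2), where $L$ is of Type IV and contains an imaginary quadratic field $E\in W(A)$; note that for $E$ imaginary quadratic the condition $E\in W(A)$ is precisely the requirement that $V$ be an $E$-Hodge structure with Hodge numbers $(p,0,\ldots,0,p)$, so the hypotheses of Theorem \ref{2phodge} are met. Here I would split on $[L:\mathbb{Q}]$.

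If $[L:\mathbb{Q}] = 4p$, then $Hg(A) = SU_{L/E}$ and $Lef(A) = U_L$; since $E\in W(A)$ contributes the factor $R_{J/\mathbb{Q}}SU(C,^{-})$ (with $J=\mathbb{Q}$) to the intersection defining $M(A)$, Proposition \ref{contained} gives $M(A)\subseteq U_L\cap R_{J/\mathbb{Q}}SU(C,^{-}) = SU_{L/E} = Hg(A)$, and the reverse inclusion is the sandwich, so $M(A) = Hg(A)$. If instead $[L:\mathbb{Q}]\ne 4p$, then $Hg(A) = R_{F/\mathbb{Q}}SU(B,^{-})$, which is exactly the derived (semisimple) part of $Lef(A) = R_{F/\mathbb{Q}}U(B,^{-})$. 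The main obstacle is to rule out an extra central torus in $M(A)$, i.e. to show $M(A)$ is semisimple; once this is known, $M(A)$ lies between the semisimple group $Hg(A)$ and a group with the same derived part, forcing $M(A) = Hg(A)$. To establish semisimplicity I would rerun the center computation used for $Hg(A)$ in the proofs of Propositions \ref{Ecoprime} and \ref{index2}: the centralizer of $M(A)$ is contained in that of $Hg(A)$, which is $L$, so $Z(M(A))\subseteq L$; the inclusion $M(A)\subseteq Lef(A)$ makes every such central element have norm one over $F$; and the inclusion $M(A)\subseteq R_{J/\mathbb{Q}}SU(C,^{-})$ coming from $E\in W(A)$ (Lemma \ref{sunitary}) forces a suitable power to be a root of unity. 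As $L$ contains only finitely many such elements, $Z(M(A))$ is finite, $M(A)$ is semisimple, and $M(A) = Hg(A)$. Combining the cases yields $Hg(A) = M(A)$ in general, and Proposition \ref{murtyprop} completes the proof.
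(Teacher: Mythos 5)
Your proof is correct and follows the paper's overall strategy exactly: reduce to $Hg(A)=M(A)$ via Proposition \ref{murtyprop}, compute $Hg(A)$ from Theorem \ref{2phodge}, and use the sandwich (\ref{mcontain}). Case (1) and the subcase $[L:\mathbb{Q}]=4p$ of case (2) match the paper's treatment (your case (1) is in fact cleaner, since you get $M(A)=Hg(A)$ directly from the sandwich without first asserting $M(A)=Lef(A)$). The genuine divergence is the subcase $[L:\mathbb{Q}]\ne 4p$: the paper disposes of it in one step by observing that $Hg(A)=Lef(A)\cap R_{J/\mathbb{Q}}SU(C,^{-})$ for the given imaginary quadratic $E\in W(A)$, whence $M(A)\subseteq Hg(A)$; you instead show that $Z(M(A))$ is finite, conclude that $M(A)$ is semisimple, and identify it with the derived group $R_{F/\mathbb{Q}}SU(B,^{-})$ of $Lef(A)$. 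The two routes rest on the same underlying fact (the condition $\det_E=1$ kills the central torus $U_L$ of $Lef(A)$), and your version has the merit of making explicit what must be controlled, at the cost of implicitly assuming $M(A)$ is connected and reductive so that $M(A)=Z(M(A))^0\cdot[M(A),M(A)]$. Two small points. First, the inclusion $M(A)\subseteq R_{J/\mathbb{Q}}SU(C,^{-})$ is part of the definition of the Murty group once $E\in W(A)$; Lemma \ref{sunitary} is not needed for it (it is needed only to know that $E\in W(A)$ is the right hypothesis for Theorem \ref{2phodge}). Second, in your finiteness argument the condition coming from $R_{J/\mathbb{Q}}SU(C,^{-})$ constrains a central element $\lambda\in L$ only through $\mathrm{Norm}_{L/E}(\lambda)^{\dim_LV}$, not through $\lambda$ itself, so the step ``a suitable power is a root of unity, hence finitely many $\lambda$'' needs exactly the same care as the corresponding center computations in Propositions \ref{Ecoprime} and \ref{index2} that you cite; since you are transplanting the paper's own argument verbatim to $M(A)$, your proof is at the same level of rigor as the paper's, but you should be aware that this is the step carrying the real content in that subcase.
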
 

\begin{proof}
When $L$ is of Type I, II, or III in Albert's classification, then by \ref{murtyprop} we have
$Lef(A)=M(A).$
Since $A$ corresponds to a simple polarizable $\mathbb{Q}$-Hodge structure $V$ with Hodge numbers $(2p,2p)$,  when $L$ is of Type I, II, or III, then  by Theorem \ref{2phodge}, we have
$Hg(A)=Lef(A).$
Hence, in Case (1) in the statement of the corollary, the Hodge group of $A$ is indeed equal to the Murty group of $A$.

In the situation of Case (2), by Theorem \ref{2phodge} we have
\begin{equation*}
 Hg(A)=
 \begin{cases}
 R_{F/\mathbb{Q}}SU(B,^-) &\mbox{if }[L:\mathbb{Q}]\ne 4p\\
SU_{L/E} &\mbox{if } [L:\mathbb{Q}]=4p,
\end{cases}
\end{equation*}
where $B$ is the centralizer of $L$ in $\mathrm{End}_\mathbb{Q}(V)$. Moreover, in this case, the Lefschetz group of $A$ is:
\begin{equation*}
 Lef(A)=
 \begin{cases}
 R_{F/\mathbb{Q}}U(B,^-) &\mbox{if }[L:\mathbb{Q}]\ne 4p\\
U_{L/E} &\mbox{if } [L:\mathbb{Q}]=4p,
\end{cases}
\end{equation*}
However, observe that this means for some $E\in W(A)$ with totally real subfield $J$ and centralizer $C$ in $\mathrm{End}_\mathbb{Q}(V)$, we have:
$$Hg(A)=Lef(A)\cap R_{J/\mathbb{Q}}SU(C,^{-}).$$
Namely,
$M(A) \subseteq Hg(A)$
and so by (\ref{mcontain}) above, we have $M(A)=Hg(A)$. Then Murty's result, Proposition \ref{murtyprop}, implies that for every $k\ge 1$ the Hodge ring $\mathcal{B}^{\bullet}(A^k)$ is generated by divisors and Weil classes.
\end{proof}


\begin{cor}\label{genhg}
Let $A$ be a simple abelian variety of dimension $2p$, where $p$ is an odd prime. Suppose the endomorphism algebra $L$ of the corresponding Hodge structure $V=H^1(X,\mathbb{Q})$ is of Type I or II in Albert's classification. Then both the Hodge and General Hodge Conjectures are satisfied for every power of $A$.
\end{cor}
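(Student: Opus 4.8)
The plan is to prove the Hodge Conjecture for every power of $A$ and then upgrade this to the General Hodge Conjecture by invoking Corollary \ref{cor1}. The first step is to pin down the Hodge group: since $L$ is of Type I or II, Theorem \ref{2phodge} gives $Hg(A)=Lef(A)$, and by Remark \ref{ss} this group is semisimple, so in fact $Hg(A)=Lef(A)=Lef(A)^{ss}$. In particular the hypotheses of Corollary \ref{cor1}(1) are satisfied, so once the Hodge Conjecture is known for all powers $A^{k}$, the General Hodge Conjecture for all powers follows immediately. Thus the whole problem reduces to proving the Hodge Conjecture for $A^{k}$, for every $k\ge 1$.

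To carry out that reduction, I would recall from Remark \ref{basicprop} that the Hodge classes of $A^{k}$ are precisely the classes in the tensor algebra on $H^{1}(A,\mathbb{Q})$ fixed by $Hg(A)$. Because $Hg(A)=Lef(A)$, these coincide with the invariants of the Lefschetz group, i.e. with the Lefschetz classes of $A^{k}$. The key input is then the classical fact that the Lefschetz classes on a power of an abelian variety are algebraic, being built from the polarization and the graphs of the endomorphisms in $L$; equivalently they form the divisor-generated part of $\mathcal{B}^{\bullet}(A^{k})$ isolated by Proposition \ref{murtyprop} via the containment $M(A)\subseteq Lef(A)$. Granting this, every Hodge class on $A^{k}$ is algebraic, so the Hodge Conjecture holds for all powers of $A$, and Corollary \ref{cor1} then delivers the General Hodge Conjecture for all powers.

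The hard part will be the algebraicity step, where one must resist a tempting but incorrect shortcut. For Type I one has $W(A)=\varnothing$, since $L=F$ is totally real and admits no CM subfield, so by Corollary \ref{hgcon} the ring $\mathcal{B}^{\bullet}(A^{k})$ is generated by divisors alone and there is nothing more to check. For Type II, however, $L$ may well contain a CM field $E$ with $V$ an $E$-Hodge structure of Weil type, so Weil classes can genuinely occur and one cannot simply claim their absence. The point I would stress is that this causes no trouble: from $Hg(A)=Lef(A)$ together with $Hg(A)\subseteq M(A)\subseteq Lef(A)$ in (\ref{mcontain}) we obtain $M(A)=Lef(A)$, so any such Weil class is already fixed by the full Lefschetz group and is therefore a Lefschetz class rather than a genuinely exceptional one. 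Hence the algebraicity of Lefschetz classes is exactly what is needed, and the equality $Hg(A)=Lef(A)$ alone forces $\mathcal{B}^{\bullet}(A^{k})$ to be generated by algebraic classes. Getting this subtlety right — that one proves the Hodge Conjecture through the algebraicity of Lefschetz classes, not through the vanishing of $W(A)$ — is the crux of the argument.
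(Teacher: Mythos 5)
Your overall skeleton matches the paper's: establish that $\mathcal{B}^{\bullet}(A^k)$ is generated by divisor classes, conclude the Hodge Conjecture, and then invoke Corollary \ref{cor1} to upgrade to the General Hodge Conjecture. Your treatment of Type I (where $W(A)=\varnothing$ because a totally real field contains no CM subfield, so Corollary \ref{hgcon} already gives divisor-generation) is fine.

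The gap is in your Type II argument, and it is a genuine one. Your key claim is that ``the equality $Hg(A)=Lef(A)$ alone forces $\mathcal{B}^{\bullet}(A^{k})$ to be generated by algebraic classes,'' via the assertion that invariants of $Lef(A)$ coincide with the divisor-generated subalgebra. That principle is false at the level of generality you invoke it: Theorem \ref{2phodge} gives $Hg(A)=Lef(A)$ in the Type III case as well, and yet the paper's own proof of this corollary records (citing Moonen--Zarhin, Section 13 of \cite{mz2}) that Type III produces \emph{exceptional} Weil classes, i.e.\ $Lef(A)$-invariant Hodge classes not generated by divisors and not known to be algebraic. So your argument, as written, would ``prove'' the Hodge Conjecture for Type III too, which is a sign that the type-specific input has been skipped. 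The missing ingredient is exactly what distinguishes Types I/II from Type III: for Types I and II the Lefschetz group is a ($\mathbb{Q}$-form of a product of) symplectic group(s), and the first fundamental theorem of invariant theory for $Sp$ says its tensor invariants are generated by the polarization forms twisted by endomorphisms, i.e.\ by divisor classes; for Type III the group is of special orthogonal type and has extra invariants. The paper supplies this input by citing Moonen--Zarhin's analysis showing that in Types I and II the Weil classes in $W_E$ are already divisor classes. Your proof would be correct if you replaced the appeal to a ``classical fact'' about Lefschetz classes with either that citation or the explicit symplectic invariant-theory computation for the Type II Lefschetz group $R_{F/\mathbb{Q}}Sp(B,^{-})$.
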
 

\begin{proof}
In \cite[Section 13]{mz2}, Moonen and Zarhin show that both the Type III and Type IV cases in the statement of Corollary \ref{hgcon} will yield exceptional Hodge classes in $W_E$, but that this will not occur in the Type I and Type II cases. Namely if $L$ is of Type I or Type II, then all of the Weil classes in $W_E$ are actually just divisor classes. Hence the Hodge ring $\mathcal{B}^{\bullet}(A^k)$ is generated by divisors and so the Hodge Conjecture is satisfied for every power of $A$. However, by Corollary \ref{cor1}, since the Hodge Conjecture holds for all powers of $A$, the General Hodge Conjecture holds for all powers of $A$.
 \end{proof}


\normalsize{\bibliography{HgGroups14.bib}}
\bibliographystyle{alpha}

\end{document}